\newtheoremstyle{theorem}{}{20pt}{\normalfont}{0pt}{\bfseries}{.}
            {0.8pc}{\thmname{#1}\thmnumber{ #2}\thmnote{ \textup{(#3)}}}
\theoremstyle{theorem}
\newtheorem{thm}{Theorem}[section]
\newtheorem*{thm*}{Theorem}
\newtheorem{remark}[thm]{Remark}
\newtheorem*{remark*}{Remark}
\newtheorem{prop}[thm]{Proposition}
\newtheorem{definition}[thm]{Definition}
\newtheorem{cor}[thm]{Corollary}
\newtheorem*{cor*}{Corollary}
\newtheorem{lemma}[thm]{Lemma}
\newtheorem*{lemma*}{Lemma}
\numberwithin{equation}{section}
\DeclareMathOperator{\supp}{supp}
\DeclareMathOperator{\dist}{dist}
\newcommand{\R}{\mathbb{R}}
\newcommand{\N}{\mathbb{N}}
\newcommand{\Z}{\mathbb{Z}}
\newcommand{\C}{\mathbb{C}}
\newcommand{\cc}{{\mathcal C}}
\newcommand{\cd}{{\mathcal D}}
\newcommand{\calk}{{\mathcal K}}
\newcommand{\co}{{\mathcal O}}
\newcommand{\bas}{\begin{align*}}
\newcommand{\eas}{\end{align*}}
\newcommand{\ba}{\begin{align}}
\newcommand{\ea}{\end{align}}
\newcommand{\bes}{\begin{equation*}}
\newcommand{\ees}{\end{equation*}}
\newcommand{\be}{\begin{equation}}
\newcommand{\ee}{\end{equation}}
\begin{document}

\title{Relations between Kondratiev spaces and refined localization Triebel-Lizorkin spaces\\[0.3cm]
{\small  \em \textdied \  In memory of Benjamin Scharf (1985--2021) \textdied }
}

\author{Markus Hansen\thanks{
Markus Hansen,  Philipps-Universit\"at Marburg, Hans-Meerwein-Str. 6,  35032 Marburg; 
E-mail: {\sf markus.hansen1@gmx.net},
Phone: +49\,6421\,28\,254868, Fax: +49\,6421\,28\,26945.
Research supported by the European Research Council (ERC) 
under the grant StG306274.},  Benjamin Scharf\thanks{www.benjamin-scharf.de},  and Cornelia Schneider\thanks{Cornelia Schneider,  FAU Erlangen, Department Mathematik,  Cauerstr. 11,  91058 Erlangen; E-mail: {\sf schneider@math.fau.de},  Phone: +49\,9131\,85\,67207,  Fax: +49\,9131\,85\,67207.}
}

\date{}


\maketitle
%
%


\begin{abstract} We investigate the close relation between certain weighted Sobolev spaces (Kondratiev spaces)  and refined localization spaces from  \cite{Tr06,Tr08}.  
In particular,   using a characterization for refined localization spaces from \cite{Scharf},  we considerably improve an embedding from \cite{Hansen1}.    This   embedding is  of special interest in connection with convergence rates for adaptive approximation schemes. 
\end{abstract}

\vspace{3mm}

{\bf Keywords:}
	Regularity for elliptic PDEs,  Kondratiev spaces,  Besov regularity,  Triebel-Lizorkin spaces,  refined localization spaces.
	
\vspace{3mm}

{\bf AMS Subject Classification (2010):} Primary 35B65; Secondary: 41A46, 46E35, 65N30.



\tableofcontents

\allowdisplaybreaks

\section{Introduction}

While often much is known about existence, uniqueness or regularity properties of solutions to boundary value problems of partial differential equations, rarely an analytic expression for solutions can be found. For that reason numerical algorithms for the constructive approximation of the solution up to a prescribed tolerance are needed.  
Numerical studies clearly indicate that modern adaptive algorithms, where the choice of the underlying degrees of freedom is not a priori fixed but depends on the shape of the unknown solution,  have a lot of potential in this context.  Unfortunately,  they  are hard to implement and to analyze,   
and therefore, it is extremely important to theoretically  investigate under which conditions   adaptive methods provably outperform classical schemes in order to justify their use. 
Regarding adaptive approximation,  in this paper we want to contribute  to answer  the following

\textbf{Question:} \textit{Given a boundary value problem for some partial differential equation, what
convergence rate may we expect for an optimal adaptive approximation method?}

To outline the main ideas,  we  consider the model problem
\begin{equation} \label{model-prob}
	-\nabla\cdot(a\nabla u)=f
\end{equation}
on some polygonal domain $D$  in $\R^2$, or a polyhedral domain in $\R^3$. For simplicity, we also restrict ourselves to homogeneous Dirichlet boundary conditions $u|_{\partial D}=0$.  Then the question can be answered in three steps:

\begin{itemize}
	\item
		\textbf{Step 1:} Study the regularity properties of solutions $u$ within an appropriate scale of
		function spaces. Here any family of spaces describing smoothness of functions seems suitable, if
		they allow for an isomorphism $f\mapsto u$. 
	\item
		\textbf{Step 2:} Embed those smoothness spaces into a second scale of function spaces adapted to the
		preferred approximation scheme. The latter spaces are determined via
	\item
		\textbf{Step 3:} Determine concrete function spaces with an optimal embedding into the approximation
		classes associated to the approximation scheme -- optimal in the sense that those should be as large
		as possible while still  maintaining favourable properties.
\end{itemize}

We will essentially be concerned with making substantial progress in view of  Step 2.  But before we   explain our results, let us briefly summarize what is known for Steps 1 and 3 so far.




{\bf Step 3:} 
Given an adaptive algorithm based on an approximation with finite elements or wavelets for the solution spaces of the PDE, the best one can expect is an optimal performance in the sense that it realizes the convergence rate of best $N$-term approximation schemes, which serves as a benchmark in this context. This can be rephrased in terms of approximation classes and one is interested in the 'right' smoothness spaces which embed into these approximation classes.  Thus,  we observe that   Step 3 can be dealt with completely independent of any regularity considerations for PDEs. 

As for the right smoothnes spaces, for many dictionaries, in particular for wavelet bases and frames, it has been shown that the order of convergence that can be achieved depends on the regularity of the object one wants to approximate in the specific so-called 'adaptivity scale' of Besov spaces
\begin{equation}\label{eq:besovscale}
	B^{m+s}_{\tau,q}(D)
		\,,\qquad m = d\,\Big(\frac{1}{\tau}-\frac 12\Big)\,,\quad\tau<2\,,\quad s\geq 0\,.
\end{equation}
We refer e.g. to \cite{BDDP02}, \cite{DNS06}, and \cite{DJP92} for details. More recently, it has also turned out that similar relations hold for finite element approximations, see \cite{GM}. On the other hand, the performance of nonadaptive (uniform) methods is determined by the $L_2$--Sobolev smoothness of the solution, see, e.g., Hackbusch \cite{Hack92} and\cite{DDD} for details. Therefore, the use of adaptivity is justified if the Besov smoothness of the exact solution to an operator equation within this scale \eqref{eq:besovscale} is high enough compared to the classical Sobolev smoothness. These relations are clearly the reason why we are highly interested in regularity estimates in the scale \eqref{eq:besovscale}.

It is nowadays classical knowledge that the Sobolev regularity of the solutions to elliptic problems
depends not only on the properties of the coefficients and the right-hand side, but also on the regularity/roughness of the boundary of the underlying domain. For linear problems, the following relations hold: While for smooth coefficients and smooth boundaries we have $u\in H^{s+2}(D)$ when
$f\in H^s(D)$, it is well-known that this becomes false for more general domains. In particular, if we only assume $D$ to be a Lipschitz domain, then it was shown by Jerison and Kenig \cite{JK} that in general we only have $u\in H^s$ for all $s\leq 3/2$ for the solution of the Poisson equation, even for smooth right-hand side $f$. This behaviour is caused by singularities near the boundary. 

In particular, the $H^{3/2}$-Theorem implies that the optimal rate of convergence for nonadaptive methods of approximation is just $3/{2d}$ (meaning for an approximative solution $u_N$ derived from a nonadaptive method the error $\|u-u_N|L_2(\Omega)\|$ behaves like $\co(N^{-3/{2d}})$) as long as we do not impose further restrictions on $\Omega\subset\R^d$. Similar relations also hold for more specific domains such as domains  of polyhedral or polygonal type, see, e.g., \cite{Gri85, Gri92, Dauge}. However, the norms considered in \eqref{eq:besovscale} are weaker than the Sobolev norm $W^m_2$ and, therefore, there is some hope that the boundary singularities do not influence the smoothness of the solution in the scale \eqref{eq:besovscale} too much (which implies that adaptivity pays off in this case).

{\bf Step 1:}
The result of Jerison and Kenig can also be re-interpreted as a limitation for the standard scale of Sobolev spaces $H^s$ to properly describe the regularity properties of solutions to problems on nonsmooth domains. 
On domains with edges and corners, these nonsmooth parts of the boundary induce singularities for the solution and its derivatives,  which diminish the Sobolev regularity but can be compensated with suitable weight functions.
By means of so-called Kondratiev spaces $\calk^m_{a,p}(D)$, i.e., weighted Sobolev spaces which are introduced via the norm
\[
	\|u|\calk^m_{a,p}(D)\|^p
		=\sum_{|\alpha|\leq m}\int_D|\rho(x)^{|\alpha|-a}\partial^\alpha u(x)|^p\,dx, 
\]
 where the weight $\rho(x)$ measures the distance to the singular set of the boundary $\partial D$ (e.g. the edges and vertices),  it is possible to describe very precisely the behaviour of these singularities.  Moreover, these specific smoothness spaces allow for certain shift  theorems (similar as the ones for smooth domains and Sobolev spaces) in the following sense. Suppose that we are given a second order elliptic differential equation on a polygonal or polyhedral domain. Then, under certain conditions on the coefficients and on the domain, it turns out that if the right-hand side has smoothness $m-1$ in the scale of Kondratiev spaces, then the solution $u$ of the PDE has smoothness $m + 1$.  
Regarding our model problem \eqref{model-prob}, we refer to  
Proposition \ref{prop-regularity} below,  where Step 1 is dealt with.

{\bf Step 2:} In view of the explanations given for Steps 1 and 3, it becomes clear that the missing link   in Step 2 is an embedding between Kondratiev spaces and the specific scale of Besov spaces from  \eqref{eq:besovscale}.  Note that instead of Besov spaces one can also work with the closely linked Triebel-Lizorkin spaces here. 
Forerunners, i.e., regularity estimates in quasi-Banach spaces according to \eqref{eq:besovscale} have  been developed   in 
\cite{Dahlke1, Dahlke2, Dahlke3,  DDD, DD,DS}  (this list is clearly not complete).  Later on, those were reformulated and extended in terms of embeddings of Kondratiev spaces into Besov and Triebel-Lizorkin spaces. 
Recently, in \cite{Hansen1} the embedding
\begin{equation}\label{old-emb}
	\calk^m_{a,p}(D)\cap B^s_{p,\infty}(D)\hookrightarrow B^m_{\tau,\infty}(D)
\end{equation}
with $\tau$ as in \eqref{eq:besovscale} for suitable ranges of parameters was shown, which   was  further sharpened in \cite{Hansen2}. 
The main result of this paper  is a substantial improvement of those embeddings,  invoking as a new tool the 
 so-called refined localization spaces $F^{s,\text{rloc}}_{p,q}(D)$, a modification of Triebel-Lizorkin spaces based on localization procedures.  Since these spaces are actually  smaller than the Triebel-Lizorkin spaces,  this eventually  leads to 
our main  Theorem \ref{thm-embedding-2}  which gives sharp results for the embedding 
\begin{equation}\label{new-emb}
\calk^m_{a,p}(D)\hookrightarrow F^m_{\tau,2}(D) 
\end{equation}
with $\tau$ as in \eqref{eq:besovscale}. 
In particular, we see that the intersection on the left-hand side in \eqref{old-emb} can be avoided.  This is of uttermost importance since the unnecessary intersection with the spaces $ B^s_{p,\infty}(D)$ leads to additional restrictions for the upper bounds of the smoothness $m$ on the right-hand side, which according to Step 3 is directly related with the (best possible) convergence rate of the adaptive algorithms. 
Finally, using \eqref{new-emb} together with the elementary embedding
$F^m_{\tau,2}(D)\hookrightarrow F^m_{\tau,\infty}(D)\hookrightarrow B^m_{\tau,\infty}(D)$,  we see that the task from Step 2 is solved.   
To round up our investigations, we combine   Steps 1 and 2 in Theorem \ref{thm-reg-pde}, where we  apply the embedding \eqref{new-emb} with Proposition \ref{prop-regularity}  in order to obtain a regularity result for boundary value problems for elliptic PDEs.  This can immediately be used in view of Step 3  to derive convergence rates for adaptive wavelet and finite element algorithms.\\




The paper is structured as follows: We first present the definitions and basic properties for Kondratiev and  refined localization spaces in Section \ref{sect-2}. Then, in Section \ref{sect-3} the close relation between
Kondratiev spaces and refined localization spaces is revealed, as for suitable domains $D$ we can identify $F^{m,\text{rloc}}_{p,2}(D)$ with the Kondratiev space $\calk^m_{m,p}(D)$.  Moreover, we present localization properties Kondratiev spaces and deal with their invariance under diffeomorphisms. 
Finally,  in Section \ref{sec-embedding} we prove necessary and sufficient conditions for the desired embedding between general Kondratiev spaces $\calk^m_{a,p}(D)$ and refined localization spaces
$F^{m,\text{rloc}}_{\tau,2}(D)$ and ultimately also $F^m_{\tau,2}(D)$ for $\tau<p$.

\section{Function spaces}
\label{sect-2}

In this section we give the definitions of the different scales of function spaces under consideration.

\subsection{Triebel-Lizorkin spaces}
\label{ssec-Besov}

Besov and Triebel-Lizorkin spaces are nowadays established as being closely related to many approximation schemes, starting with approximating periodic functions by trigonometric polynomials (where the full scale of Besov spaces actually first emerged in the works of Besov 1959/60), free-knot spline approximation (see \cite{devore}), $n$-term wavelet approximation and most recently adaptive finite element schemes \cite{GM}. From this aspect stems the main motivation for us to study embeddings into these scales of function spaces. In this work we shall concentrate on Triebel-Lizorkin spaces only. 
Triebel-Lizorkin spaces can be introduced in a number of (equivalent) ways, including definitions in terms of finite differences or Littlewood-Paley decompositions, or via their wavelet characterization. Here we shall give the Fourier-analytical version in terms of dyadic Littlewood-Paley decompositions.

We start with a function $\varphi_0\in\mathcal{S}(\R^d)$ with $\varphi_0(x)=1$ for $|x|\leq 1$ and $\varphi(x)=0$ for $|x|\geq\frac{3}{2}$. Define $\varphi_1(x)=\varphi_0(2x)-\varphi_0(x)$, and put
$\varphi_j(x)=\varphi_1(2^{-j+1}x)$. Then $\{\varphi_j\}_{j\in\N_0}$ forms a so-called dyadic resolution of unity; in particular, we have $\sum_{j\geq 0}\varphi_j(x)=1$ for every $x\in\R^d$.

Based on such resolutions of unity, we can decompose every tempered distribution $f\in\mathcal{S}'(\R^d)$ into a series of entire analytical functions,
\[
	f=\sum_{j\geq 0}\mathcal{F}^{-1}(\varphi_j\mathcal{F}f)\,,
\]
converging in $\mathcal{S}'$, where $\mathcal{F}$ stands for the Fourier transform. Such Littlewood-Paley decompositions then can be used to define function spaces. In our particular case, for $s\in\R$, $0<p<\infty$ and $0<q\leq\infty$ the Triebel-Lizorkin space $F^s_{p,q}(\R^d)$ is defined as the collection of all distributions $f\in\mathcal{S}'(\R^d)$ such that
\[
	\|f|F^s_{p,q}(\R^d)\|
		=\Biggl\|\biggl(\sum_{j\geq 0}\bigl|\mathcal{F}^{-1}(\varphi_j\mathcal{F}f)(\cdot)\bigr|^q\biggr)^{1/q}
		\Bigg|L_p(\R^d)\Biggr\|
\]
is finite, with a supremum instead of a sum if $q$ is infinite. For more details on the history, equivalent definitions, and properties we refer to the monographs by Triebel \cite{Tr83, Tr92, Tr06}.

Starting from these Fourier-analytical function spaces, the most direct way to introduce spaces on domains (needed  for studying boundary value problems for elliptic PDEs which is our main motivation) is via restriction: We define
\[
	F^s_{p,q}(D):=\bigl\{f\in\cd'(D):\exists\, g\in F^s_{p,q}(\R^d)\,,g\big|_D=f\bigr\}\,,
	\qquad\|f|F^s_{p,q}(D)\|=\inf_{g|_D=f}\|g|F^s_{p,q}(\R^d)\|\,.
\]
Alternative (different or equivalent) versions of this definition can be found, depending on possible additional properties for the distributions $g$ (most often referring to their support). We refer to the monograph \cite{Tr08} for details and references.

A final important aspect of Triebel-Lizorkin spaces are their close relations to many classical function spaces. For our purposes, we particularly mention the identities $F^s_{p,2}(\R^d)=H^s_p(\R^d)$ and $H^m_p(\R^d)=W^m_p(\R^d)$, $1<p<\infty$, $m\in\N$, $s\in\R$, which for Lipschitz domains $D$ transfer directly to the respective scales of function spaces on $D$.

\subsection{Kondratiev spaces}

Another scale of function spaces we are interested in is a type of weighted Sobolev spaces. These spaces $\calk^m_{a,p}(D)$, nowadays often referred to as (Babuska-)Kondratiev spaces, play a central role in the regularity theory for elliptic PDEs on domains with piecewise smooth boundary, particularly polygons (in 2D) and polyhedra (in 3D). The basic idea behind using this scale of spaces, as opposed to the usual scale of Sobolev spaces used in connection with sufficiently regular domains, is to compensate for singularities which are known to emerge at the boundary even for smooth data.  

More precisely, if we only assume the domain $D$ to be Lipschitz, it was shown in \cite{JK} that in general we only have $u\in H^{3/2}(D)$ for the solution of the Poisson equation, even for smooth right-hand side $f$. Similar results on polygons are known since the works of Kondratiev (\cite{Kon67,Kon70,KO83}); we refer to \cite{Gri92} for an overview. In the latter case, it is known that a solution to Poisson's equation can always be decomposed into a regular part $u_R\in H^{s+2}(D)$ (where the right-hand side $f$ belongs to $H^s(D)$) and a singular part $u_S$ which in turn is a linear combination of a finite number of explicitly known singularity functions. These singularity functions are smooth in the interior of the polygon, but exhibit a polynomial blow-up at the vertices.

To compensate for this kind of behaviour while still staying close to ordinary Sobolev spaces, weights are introduced, which finally leads to considering the norm
\begin{equation}\label{K-norm-domains}
	\|u|\calk^m_{a,p}(D)\|^p
		=\sum_{|\alpha|\leq m}\int_D|\rho(x)^{|\alpha|-a}\partial^\alpha u(x)|^p\,dx
\end{equation}
for functions admitting $m$ weak derivatives in $D$. Therein $1<p\leq\infty$ (with the usual modification for $p=\infty$), $a\in\R$, and $\rho:D\longrightarrow [0,1]$ is the smooth distance to the singular set of $D$. This means $\rho$ is a smooth function, and in the vicinity of the singular set it is equivalent to the distance to that set. In 2D this singular set consists exactly of the vertices of the polygon, while in 3D it consists of the vertices and edges of the polyhedra. In case of mixed boundary conditions the singular set further includes points where the boundary conditions change (which can be interpreted as vertices with interior angle $\pi$), and for interface problems points where the interface touches the boundary, respectively in higher dimensions. Note that in general polygonal domains need not to be Lipschitz, in particular, the definition of the Kondratiev spaces and some related regularity results allow for cracks in the domain, which in turn corresponds to vertices with interior angle $2\pi$. In case $p=2$ we simply write $\calk^m_a(D)$.  For more information regarding  Kondratiev spaces and their properties,  we refer to \cite{HS24}.

Within this scale of function spaces, a typical regularity result for boundary value problems for elliptic PDEs can be formulated as follows, see \cite{BMNZ} and the references given there:

\begin{prop}\label{prop-regularity}
	Let $D$ be some bounded polyhedral domain without cracks in $\R^d$, $d=2,3$. Consider the problem
	\begin{equation}\label{eq:PDE}
		-\nabla\bigl(A(x)\cdot\nabla u(x)\bigr)=f\quad\text{in}\quad D\,,\qquad u|_{\partial D}=0\,,
	\end{equation}
	where $A=(a_{i,j})_{i,j=1}^d$ is symmetric and
	\[
		a_{i,j}\in\calk^m_{0,\infty}=\bigl\{v:D\longrightarrow\C:
			\rho^{|\alpha|}\partial^\alpha v\in L_\infty(D)\,,|\alpha|\leq m\bigr\}\,,
		\qquad 1\leq i,j\leq d\,.
	\]
	Let the bilinear form
	\[
		B(v,w)=\int_D\sum_{i,j}a_{i,j}(x)\partial_i v(x)\partial_j w(x)dx
	\]
	satisfy
	\[
		|B(v,w)|\leq R\|v|H^1(D)\|\cdot\|w|H^1(D)\|\qquad\text{and}\qquad r\|v|H^1_0(D)\|^2\leq B(v,v)
	\]
	for all $v,w\in H^1_0(D)$ and  for some constants $0<r\leq R<\infty$. Then there exists some $\overline a>0$ such that for any
	$m\in\N_0$, any $|a|<\overline a$,  and any $f\in\calk^{m-1}_{a-1}(D)$ the problem \eqref{eq:PDE} admits
	a uniquely determined solution $u\in\calk^{m+1}_{a+1}(D)$, and it holds
	\[
		\|u|\calk^{m+1}_{a+1}(D)\|\leq C\,\|f|\calk^{m-1}_{a-1}(D)\|
	\]
	for some constant $C>0$ independent of $f$.
\end{prop}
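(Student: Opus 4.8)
The plan is to prove this classical Kondratiev shift theorem by the standard three-step scheme: a variational existence/uniqueness result in the energy space, a localized weighted a priori estimate, and an absorption of the lower order term. For existence and uniqueness I will use only that $B$ is bounded and coercive on $H^1_0(D)$ --- which is exactly the hypothesis --- together with the continuous embedding $\calk^{m-1}_{a-1}(D)\hookrightarrow H^{-1}(D)$, valid for $|a|$ small by Hardy's inequality near the singular set. The Lax--Milgram theorem then produces a unique $u\in H^1_0(D)$ with $\|u|H^1(D)\|\le C\|f|\calk^{m-1}_{a-1}(D)\|$, and uniqueness in $H^1_0(D)$ already forces uniqueness in the smaller space $\calk^{m+1}_{a+1}(D)$; what remains is to show that this $u$ in fact lies in $\calk^{m+1}_{a+1}(D)$ with the stated bound.

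The heart of the argument will be the a priori estimate
\[
\|u|\calk^{m+1}_{a+1}(D)\|\;\le\;C\bigl(\|f|\calk^{m-1}_{a-1}(D)\|+\|u|H^1(D)\|\bigr)
\]
for solutions already known to belong to $\calk^{m+1}_{a+1}(D)$; together with the energy estimate this yields the proposition at once, the $H^1$-term being then dominated by $\|f|\calk^{m-1}_{a-1}(D)\|$. To prove this estimate I will use the localization property of Kondratiev spaces: subordinate a partition of unity to a covering of $D$ by interior balls, by dyadic neighbourhoods of the edges, and by conical neighbourhoods of the vertices, reducing matters to estimating each $\varphi u$ separately. On interior patches $\rho$ is comparable to a constant, the weights drop out, and the claim is the classical $L_2$ elliptic regularity bound $\|v|H^{m+2}\|\lesssim\|Lv|H^m\|+\|v|H^1\|$, available because there $a_{i,j}\in W^m_\infty$. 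On a conical neighbourhood of a vertex I will pass to polar/spherical coordinates so that $\calk^{m+1}_{a+1}$ becomes a weighted Sobolev space with a Mellin weight in the radial variable, apply the Mellin transform, and reduce to a parameter-dependent elliptic boundary value problem on the spherical cross-section. The shift then holds precisely because the associated operator pencil has no eigenvalue on the critical line $\operatorname{Re}\lambda=1-\tfrac d2$; $\bar a$ will be the half-width of the maximal spectrum-free strip around that line --- it depends only on the opening angles, hence is independent of $m$ --- and for $|a|<\bar a$ the line $\operatorname{Re}\lambda=1-\tfrac d2-a$ remains spectrum-free, which gives the desired isomorphism on the Mellin side. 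Near an edge the same analysis is run with the edge variable as an additional parameter, reducing to the two-dimensional corner pencil; in three dimensions one must in addition match the anisotropic weighted estimates in the wedges along an edge with the conical estimates at its endpoints.

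The passage from frozen to genuine variable coefficients, and the step from order $m$ to order $m+1$ in the regularity, will be handled by induction on $m$: when one differentiates \eqref{eq:PDE} or freezes the coefficients at a singular point, the perturbation terms are of lower order in the weighted scale, and their contribution is controlled by the multiplier property that multiplication by a function in $\calk^{m}_{0,\infty}$ maps $\calk^{m}_{a}(D)$ into itself (a weighted Leibniz estimate in which the powers of $\rho$ precisely account for the lost derivative). I expect the genuine obstacle to be the vertex and edge analysis --- establishing the spectral gap for the operator pencil, and, in 3D, correctly gluing the anisotropic edge estimates to the conical vertex estimates --- while the interior regularity and the variable-coefficient bookkeeping are routine by comparison.
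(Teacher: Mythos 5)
The paper does not prove Proposition \ref{prop-regularity} at all: it is quoted as a known shift theorem, with reference to \cite{BMNZ} (Bacuta--Mazzucato--Nistor--Zikatanov) ``and the references given there''. So the relevant comparison is with the proof in that source, which is \emph{not} the Mellin/operator-pencil route you sketch: there, $\overline a$ is produced by a perturbation argument --- conjugation of the form by $\rho^{\pm a}$ combined with a weighted Hardy--Poincar\'e inequality shows that coercivity survives for $|a|$ small, and higher order regularity ($m\geq 1$) is then obtained by induction using dyadic localization and rescaling, exactly because the hypothesis $a_{i,j}\in\calk^m_{0,\infty}$ is tailored to be scale-invariant. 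Your Lax--Milgram step and the $H^1$-to-$\calk^{m+1}_{a+1}$ a priori estimate strategy are consistent with that scheme, but the core of your argument is different and, under the stated hypotheses, has a genuine gap.

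The gap is in the vertex/edge analysis, which you yourself identify as the heart of the matter but only name. The pencil approach requires freezing the coefficients at a vertex (or along an edge) and treating the difference as a lower-order or small perturbation. Here the coefficients are only assumed to satisfy $\rho^{|\alpha|}\partial^\alpha a_{i,j}\in L_\infty(D)$, $|\alpha|\leq m$, together with coercivity; such coefficients need not have limits at the singular set (e.g.\ a coefficient oscillating like $\sin(\log\rho)$ near a vertex is admissible), so there is no well-defined frozen-coefficient pencil, and the variable-minus-frozen perturbation is neither small nor of lower order in the weighted scale --- your ``weighted Leibniz'' multiplier remark controls products, but it does not make this perturbation negligible. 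For the same reason your claim that $\overline a$ is the half-width of a spectrum-free strip ``depending only on the opening angles, hence independent of $m$'' cannot be correct in this generality: with rough coefficients $\overline a$ must depend on the operator (in \cite{BMNZ} it comes from the coercivity constants $r,R$ via the Hardy-inequality perturbation, which is also why it is uniform in $m$). Two further points you would need to repair even within your own scheme: the case $m=0$ involves the negative-order space $\calk^{-1}_{a-1}(D)$, which your induction neither defines nor treats; and the 3D matching of anisotropic edge estimates with conical vertex estimates is precisely the technical content of the theorem, so leaving it as ``the genuine obstacle'' means the proposal is incomplete exactly where the proof is hard. As it stands, the argument would go through only under substantially stronger coefficient hypotheses (continuity, or smoothness up to the closure, at the singular set) than those assumed in the proposition.
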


In the literature there are further results of this type, either treating different boundary conditions, or using slightly different scales of function spaces, see \cite{mazja1,mazja2,NistorMazzucato}.

\paragraph{Kondratiev spaces on $\R^d$}

Apart from the previously introduced Kondratiev spaces on polygonal or polyhedral domains, in this work we shall also consider Kondratiev spaces on the whole of $\R^d$. These are connected to the corresponding  spaces on domains by a result proved in \cite{Hansen1}, the boundedness of the Stein extension operator
$\mathfrak{E}:\calk^m_{a,p}(D)\rightarrow\calk^m_{a,p}(\R^d\setminus S)$, which was subsequently used to derive sharp embedding results into Besov and Triebel-Lizorkin spaces, see \cite{Hansen1, Hansen2}. Though in \cite{Hansen1} only the cases $d=2$ and $d=3$ were considered, the arguments can be extended to the case $d>3$ without essential changes. The norm in the spaces $\calk^m_{a,p}(\R^d\setminus S)$ is the direct analog of \eqref{K-norm-domains}, i.e.
\[
	\|u|\calk^m_{a,p}(\R^d\setminus S)\|^p
		=\sum_{|\alpha|\leq m}\int_{\R^d}|\rho_S(x)^{|\alpha|-a}\partial^\alpha u(x)|^p\,dx\,,
		\quad 1<p<\infty\,,
\]
where $\rho_S$ is again the regularized distance to $S$, but now defined on $\R^d$.

In fact, this formulation immediately allows us to generalize this definition to arbitrary closed sets $S\subset\R^d$, where in the sequel we will always assume $S$ to be a null-set w.r.t. the $d$-dimensional Lebesgue measure. Note that Stein \cite[Section VI.3]{Stein} gave a construction for a regularized distance function (i.e. a smooth function equivalent to the usual distance) for arbitrary closed subsets of $\R^d$. As we shall see later on, the spaces obtained in this way are closely related to the so-called refined localization spaces.  Of particular interest are sets 
$$S=\R^\ell\equiv\R^\ell\times\{0\}^{d-\ell}\subset\R^d\qquad \text{for}\qquad 0\leq\ell<d.$$ 
In subsequent investigations,  the (unbounded) domain $D=\R^d\setminus \mathbb{R}^{\ell}$ with boundary $\partial D=\R^\ell$ will represent a prototypical/model situation. 
We shall use localization arguments to transfer results from these special domains to polyhedral domains.

\subsection{Refined localization spaces}

The final scale of function spaces of interest to us are the refined localization spaces. The original definitions, some basic properties and further references can be found in the monographs \cite{Tr06, Tr08}.

Let $D\subset \mathbb{R}^d$ be an arbitrary domain, i.e., a non-empty open set,  with boundary $\partial D$ and $\overline{D}$ its closure.  
Moreover, let $Q_{j,k}=2^{-j}((0,1)^d+k)$ be the open cube with vertex in $2^{-j}k$ and side length $2^{-j}$, $j\geq 0$, $k\in\Z$.  We denote by $2Q_{j,k}$ the cube concentric with $Q_{j,k}$ with side length $2^{-j+1}$. Then there is a collection of pairwise disjoint cubes $\{Q_{j,k_l}\}_{j\geq 0, l=1,\ldots,N_j}$ such that
\begin{equation}\label{whitney-decomp}
	D=\bigcup_{j\geq 0}\bigcup_{l=1}^{N_j}\overline Q_{j,k_l}\,,\qquad
	\text{dist}(2Q_{j,k_l},\partial D)\sim 2^{-j}\,,\quad j\in\N\,,
\end{equation}
complemented by $\text{dist}(2Q_{0,k_l},\partial D)\geq c>0$. This family of cubes constitutes a so-called Whitney decomposition of the domain $D$. For details we refer to \cite[Theorem VI.1, p. 167]{Stein}. 

\begin{definition}\label{def-rloc}
	Let $\{\varphi_{j,l}\}$ be a resolution of unity of non-negative $C^\infty$-functions w.r.t. the family
	$\{Q_{j,k_l}\}$, i.e.
	\[
		\sum_{j,l}\varphi_{j,l}(x)=1\text{ for all }x\in D\,,\qquad
		|\partial^\alpha\varphi_{j,l}(x)|\leq c_\alpha 2^{j|\alpha|}\,,\quad\alpha\in\N_0^d\,.
	\]
	Moreover, we require $\supp\varphi_{j,l}\subset 2Q_{j,k_l}$. Then we define the refined localization spaces
	$F^{s,\text{rloc}}_{p,q}(D)$ to be the collection of all locally integrable functions $f$ such that
	\[
		\|f|F^{s,\text{rloc}}_{p,q}(D)\|
			=\biggl(\sum_{j=0}^\infty\sum_{l=1}^{N_j}\|\varphi_{j,l}f|F^s_{p,q}(\R^d)\|^p\biggr)^{1/p}<\infty\,,
	\]
	where $0<p<\infty$, $0<q\leq\infty$,  and $s>\sigma_{p,q}:=d\left(\frac{1}{\min(1,p,q)}-1\right)$.
\end{definition}

\begin{remark}\label{remark-rloc}
	(i)	In Triebel's orginal definition, he referred to cubes with center in $2^{-j}k$. In that case, we can no longer find a
	partition of the domain $D$ into such cubes with the above properties, but only a cover. However, this still leads to 
	the same spaces, as can be seen by standard arguments (the main aspect being that the number of cubes overlapping
	at any given point of the domain is still uniformly bounded).
	
	\vspace{3mm}
	
	(ii) A resolution of unity with the required properties can always be found: Start with a bump function
	$\phi\in C^\infty(\R^d)$ for $Q=(0,1)^d$, i.e. $\phi(x)=1$ on $Q$ and $\supp\phi\subset 2Q$. Via dilation and
	translation this yields bump functions $\phi_{j,k}$ for $Q_{j,k}$, with $\psi(x)=\sum_{j,k}\phi_{j,k}(x)>0$ for all 
	$x\in D$. The functions $\varphi_{j,k}=\phi_{j,k}/\psi$ have the required properties.
	
	\vspace{3mm}
	
	(iii) It has been shown in \cite[Proposition 4.20]{Tr06} and \cite[Proposition 3.10]{Tr08} that in the case of
	Lipschitz domains these refined localization spaces coincide with the spaces
	\[
		\widetilde F^s_{p,q}(D):=\{f\in F^s_{p,q}(\R^d):\supp f\subset\overline D\}
	\]
	for all parameters $0<p<\infty$, $0<q\leq\infty$ and $s>\sigma_{p,q}$. For other types of domains this is generally no
	longer true. For example, in \cite[Theorem 3.30]{Scharf} for $D=\R^d\setminus\R^\ell$,  $0\leq l<d$,   and $1\leq p,q<\infty$
	explicit descriptions have been given; e.g., for $0<s<(d-\ell)/p$ we then have
	$F^{s,\text{rloc}}_{p,q}(\R^d\setminus\R^\ell)=F^s_{p,q}(\R^d)$. Below, we will primarily be interested in the case
	$F^{s,\text{rloc}}_{p,q}(\R^d\setminus\R^\ell)$, but with regularity $s>\sigma_{p,q}=d(\frac{1}{\min(1,p,q)}-1)$.
	
	\vspace{3mm}
	
	(iv) It is known that for arbitrary domains $D$ and $0<p<\infty$, $0<q\leq\infty$, $s>\sigma_{p,q}$ it holds
	\begin{equation}\label{eq-refined-weight}
		\|u|F^{s,\text{rloc}}_{p,q}(D)\|\sim\|u|F^s_{p,q}(D)\|+\|\delta(\cdot)^{-s}u|L_p(D)\|\,,
	\end{equation}
	where $\delta(x)=\min\left(\dist(x,\partial D),1\right)$. In particular, we always have
	\[
		\|u|F^s_{p,q}(D)\|\lesssim\|u|F^{s,\text{rloc}}_{p,q}(D)\|\,,\quad\text{i.e.}\quad
		F^{s,\text{rloc}}_{p,q}(D)\hookrightarrow F^s_{p,q}(D)\,.
	\]
	For the proof in the case $q<\infty$ we refer to \cite[Proposition 3.22]{Scharf}. Note that in \eqref{eq-refined-weight}  we
	obviously can replace $\delta$ by the regularized distance $\rho$.
	
	A closer inspection of the proof in \cite{Scharf} reveals that the result remains true also for $q=\infty$: While the
	wavelet systems $\{\psi^j_r:j\in\N_0, r=1,\ldots,N_j\}$ considered in these arguments no
	longer constitute bases for $F^{s,\text{rloc}}_{p,\infty}(D)$, they remain representation systems, i.e. every
	function $f\in F^{s,\text{rloc}}_{p,\infty}(D)$ admits a representation
	\[
		f=\sum_{j,r}c^j_r(f)\psi_r^j
	\]
	for suitable coefficients $c^j_r(f)$, converging in $\mathcal{S}'(\R^d)$ as well as
	$F^{s-\varepsilon,\text{rloc}}_{p,\infty}(D)$ for $\varepsilon>0$, $s-\varepsilon>\sigma_{p,q}$. Moreover, also
	the usual (quasi-)norm-equivalence to some sequence space norm on the coefficient sequence carries over.
	Ultimately, these two facts suffice to prove \eqref{eq-refined-weight}.
\end{remark}

The refined localization spaces share many of the properties of the classical Triebel-Lizorkin spaces. For our purposes we will need two of these properties. At this point we  mention a result about pointwise multipliers; later on in Section \ref{ssec-diffeo} we will investigate the behavior under diffeomorphisms.

\begin{lemma}\label{lemma-pointwise}
	Let $D\subset\R^d$ be an arbitrary domain. Further,  let $s>\sigma_{p,q}$, $0<p<\infty$, $0<q\leq\infty$ and
	$r>s$. Then there exists a positive constant $c>0$ such that
	\[
		\|f\cdot g|F^{s,\text{rloc}}_{p,q}(D)\|
			\leq c\,\|g|\cc^r(\R^d)\|\cdot\|f|F^{s,\text{rloc}}_{p,q}(D)\|\,,
			\qquad f\in F^{s,\text{rloc}}_{p,q}(D)\,,g\in\cc^r(\R^d)\,,
	\]
	where $\cc^r(\R^d)=B^r_{\infty,\infty}(\R^d)$ are the H\"older-Zygmund spaces, and $c$ is independent of $f$,
	$g$ and $D$.
\end{lemma}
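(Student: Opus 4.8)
The plan is to reduce the pointwise‑multiplier estimate for the refined localization spaces to the corresponding, well‑known pointwise‑multiplier assertion for the ordinary Triebel–Lizorkin spaces $F^s_{p,q}(\R^d)$ (see \cite{Tr92}, \cite{Tr06}), applied term by term to the local pieces $\varphi_{j,l}f$. Recall that by Definition \ref{def-rloc},
\[
	\|fg|F^{s,\text{rloc}}_{p,q}(D)\|^p=\sum_{j,l}\|\varphi_{j,l}(fg)|F^s_{p,q}(\R^d)\|^p
		=\sum_{j,l}\|(\varphi_{j,l}f)\,g|F^s_{p,q}(\R^d)\|^p\,,
\]
so if we had the clean estimate $\|h\,g|F^s_{p,q}(\R^d)\|\le c\,\|g|\cc^r(\R^d)\|\,\|h|F^s_{p,q}(\R^d)\|$ available with a constant $c$ independent of $h$ (which holds for $r>s>\sigma_{p,q}$), we could apply it with $h=\varphi_{j,l}f$ and sum the $p$‑th powers to get exactly
\[
	\|fg|F^{s,\text{rloc}}_{p,q}(D)\|^p\le c^p\|g|\cc^r(\R^d)\|^p\sum_{j,l}\|\varphi_{j,l}f|F^s_{p,q}(\R^d)\|^p
		=c^p\|g|\cc^r(\R^d)\|^p\,\|f|F^{s,\text{rloc}}_{p,q}(D)\|^p\,.
\]
Crucially, this argument never touches the geometry of $D$ — the Whitney cubes and the functions $\varphi_{j,l}$ are only used to define the norm — so the resulting constant is automatically independent of $D$, as claimed.

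The first step is therefore to fix a resolution of unity $\{\varphi_{j,l}\}$ as in Definition \ref{def-rloc} (the space is independent of the choice, by the standard arguments alluded to in Remark \ref{remark-rloc}(ii)), and to observe that $fg$ is again locally integrable so that the $F^{s,\text{rloc}}_{p,q}$‑norm makes sense: since $g\in\cc^r(\R^d)\subset L_\infty(\R^d)$, multiplication by $g$ preserves local integrability. Then one writes out the $F^{s,\text{rloc}}_{p,q}(D)$‑norm of $fg$ as above and applies the Fourier‑analytic pointwise‑multiplier theorem for $F^s_{p,q}(\R^d)$ to each summand. Finally one re‑assembles the sum. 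For $q=\infty$ one uses the version of the multiplier theorem for $F^s_{p,\infty}(\R^d)$ (valid in the same parameter range $r>s>\sigma_{p,\infty}$), and the re‑assembly is verbatim the same since the outer $\ell_p$‑sum over $(j,l)$ is unaffected by the value of the inner microscopic parameter $q$.

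The one genuine technical point — and the place where a little care is needed rather than a routine citation — is the choice of the pointwise‑multiplier statement for $F^s_{p,q}(\R^d)$ with a constant that is \emph{uniform in} $h=\varphi_{j,l}f$. The standard results in \cite{Tr92, Tr06} indeed give $\|hg|F^s_{p,q}(\R^d)\|\lesssim\|g|\cc^r(\R^d)\|\,\|h|F^s_{p,q}(\R^d)\|$ for $r>s>\sigma_{p,q}$ with a constant depending only on $d,s,p,q,r$; what one should double‑check is that no localization/support assumption on $h$ is built into the version one cites (some formulations of multiplier theorems are stated for compactly supported multipliers, but here it is $h$, not $g$, that one would want restricted, and we do \emph{not} want to assume that). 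Using the self‑adjoint Hölder–Zygmund space $\cc^r=B^r_{\infty,\infty}$ as the multiplier class is exactly what makes such a clean, support‑free statement available, so the proof is really: quote the right $\R^d$‑theorem, then sum. I expect no further obstacle; the lemma is essentially a "localization transfers pointwise multipliers" principle.
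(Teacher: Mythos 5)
Your argument is exactly the paper's proof: write out the $F^{s,\text{rloc}}_{p,q}(D)$-norm via the resolution of unity, use $\varphi_{j,l}(fg)=(\varphi_{j,l}f)g$, apply the Hölder–Zygmund multiplier theorem for $F^s_{p,q}(\R^d)$ to each term $g\cdot\varphi_{j,l}f$, and sum, with the constant inherited from the $\R^d$-estimate and hence independent of $D$. Correct and essentially identical to the paper's (very brief) proof.
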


\begin{proof}
	The result immediately follows from the definition of the spaces $F^{s,\text{rloc}}_{p,q}(D)$ and the
	corresponding multiplier result for $F^s_{p,q}(\R^d)$ (for which we refer to \cite{Tr83, Tr92}), applied to the
	terms $g\cdot\varphi_{j,l}f$. In particular, the constant $c$ is the one in the corresponding estimate for $F^s_{p,q}(\R^d)$,
	independent of $D$.
\end{proof}

\section{A relation between Kondratiev and refined localization spaces}
\label{sect-3}

In this section we prove two results showing a close relation between Kondratiev spaces and the refined localization spaces. We begin with an identity for a particular choice of parameters, followed by a new localization argument for Kondratiev spaces, which in turn will be the basis for an embedding result in the next section.

\subsection{The spaces $\calk^m_{m,p}(D)$}
\label{ssec-Kmm}

Recalling the identity $F^m_{p,2}(\R^d)=H^m_p(\R^d)=W^m_p(\R^d)$ (which transfers also to Lipschitz domains) and keeping in mind the norm-equivalence \eqref{eq-refined-weight} in the special situation $s=m$,   suggests a possible relation between the spaces $\calk^m_{m,p}(D)$ and $F^{m,\text{rloc}}_{p,2}(D)$.  A formalization of  this relation is based on the next lemma.

\begin{lemma}\label{lemma-rloc-derivative}
	Let $D$ be an arbitrary domain in $\R^d$. Let $0<p,q<\infty$, and let a multiindex $\alpha\in\N^d_0$ with
	$s-|\alpha|>\sigma_{p,q}$ be given. Then for every $f\in F^{s,\text{rloc}}_{p,q}$ its derivative $\partial^\alpha f$
	belongs to $F^{s-|\alpha|,\text{rloc}}_{p,q}(D)$, and we have an estimate
	\begin{equation}\label{eq-derivative}
		\|\partial^\alpha f|F^{s-|\alpha|,\text{rloc}}_{p,q}(D)\|
			\lesssim\|f|F^{s,\text{rloc}}_{p,q}(D)\|\,.
	\end{equation}
\end{lemma}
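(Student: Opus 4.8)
The plan is to estimate directly from the definition of the refined localization quasi-norm. Since $f\in F^{s,\text{rloc}}_{p,q}(D)$, every localized piece $\varphi_{j,l}f$ lies in $F^s_{p,q}(\R^d)$, so $f$ and its distributional derivatives make sense on $D$, and what must be shown is $\sum_{j,l}\|\varphi_{j,l}\,\partial^\alpha f|F^{s-|\alpha|}_{p,q}(\R^d)\|^p\lesssim\sum_{j,l}\|\varphi_{j,l}f|F^s_{p,q}(\R^d)\|^p$. The obstruction is that localization and differentiation do not commute, $\varphi_{j,l}\,\partial^\alpha f\neq\partial^\alpha(\varphi_{j,l}f)$; I would circumvent it by differentiating a slightly fattened cut-off that equals $1$ on $\supp\varphi_{j,l}$, so that this fattened cut-off is ``invisible'' to the derivative there.

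Concretely, for each $(j,l)$ let $N(j,l)$ be the finite index set of Whitney neighbours $(j',l')$, i.e.\ those with $2Q_{j',k_{l'}}\cap 2Q_{j,k_l}\neq\emptyset$ (including $(j,l)$ itself), and set $\Phi_{j,l}:=\sum_{(j',l')\in N(j,l)}\varphi_{j',l'}$. Standard properties of Whitney decompositions give: $\#N(j,l)$ is bounded uniformly, all cubes indexed by $N(j,l)$ have side length $\sim 2^{-j}$ (hence $\supp\Phi_{j,l}\subset cQ_{j,k_l}$), and for fixed $(j',l')$ the condition $(j',l')\in N(j,l)$ holds for only boundedly many $(j,l)$. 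Since the $\varphi_{j',l'}$ with $(j',l')\notin N(j,l)$ are supported away from $2Q_{j,k_l}$ and form a family that is locally finite in $D$, one checks that $\Phi_{j,l}\equiv 1$ on an open neighbourhood of $\supp\varphi_{j,l}$. Therefore $f=\Phi_{j,l}f$ there, so $\partial^\alpha f=\partial^\alpha(\Phi_{j,l}f)$ there, and multiplying by $\varphi_{j,l}$ (supported in that neighbourhood) yields the identity $\varphi_{j,l}\,\partial^\alpha f=\varphi_{j,l}\,\partial^\alpha(\Phi_{j,l}f)$. Moreover $\Phi_{j,l}f=\sum_{(j',l')\in N(j,l)}\varphi_{j',l'}f$ is a finite sum of elements of $F^s_{p,q}(\R^d)$, hence itself lies in $F^s_{p,q}(\R^d)$.

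Next I would estimate the right-hand side of the identity by two classical facts about $F^\sigma_{p,q}(\R^d)$, used with $\sigma=s-|\alpha|>\sigma_{p,q}$. (a) Differentiation maps $F^s_{p,q}(\R^d)$ boundedly into $F^{s-|\alpha|}_{p,q}(\R^d)$ (cf.\ \cite{Tr83,Tr92}), so $\|\partial^\alpha(\Phi_{j,l}f)|F^{s-|\alpha|}_{p,q}(\R^d)\|\lesssim\|\Phi_{j,l}f|F^s_{p,q}(\R^d)\|$. (b) Pointwise multiplication by $\varphi_{j,l}$ is bounded on $F^{s-|\alpha|}_{p,q}(\R^d)$ with a constant independent of $(j,l)$ \emph{when applied to functions supported in a fixed dilate of $Q_{j,k_l}$} -- which is the case for $\partial^\alpha(\Phi_{j,l}f)$, supported in $\supp\Phi_{j,l}\subset cQ_{j,k_l}$. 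This is the main technical point: rescaling $Q_{j,k_l}$ to the unit cube turns the uniform bounds $|\partial^\gamma\varphi_{j,l}|\leq c_\gamma 2^{j|\gamma|}$ into uniform $\cc^r(\R^d)$-bounds for the rescaled multipliers, one applies the fixed pointwise-multiplier theorem for $F^{s-|\alpha|}_{p,q}(\R^d)$ there, and rescales back -- the dilation factors that $F^{s-|\alpha|}_{p,q}(\R^d)$ picks up under $x\mapsto 2^{-j}x$ on functions supported in a $2^{-j}$-cube compensate each other. Combining (a), (b) and the identity from the previous paragraph gives $\|\varphi_{j,l}\,\partial^\alpha f|F^{s-|\alpha|}_{p,q}(\R^d)\|\lesssim\|\Phi_{j,l}f|F^s_{p,q}(\R^d)\|$ uniformly in $(j,l)$. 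Taking $p$-th powers, summing over $(j,l)$, bounding $\|\Phi_{j,l}f|F^s_{p,q}(\R^d)\|^p$ by the (uniformly finitely many) terms $\|\varphi_{j',l'}f|F^s_{p,q}(\R^d)\|^p$, $(j',l')\in N(j,l)$, and invoking the bounded-overlap property yields \eqref{eq-derivative}, and in particular $\partial^\alpha f\in F^{s-|\alpha|,\text{rloc}}_{p,q}(D)$.

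The step I expect to require the most care is ingredient (b), the scale-uniform pointwise multiplier estimate for the $\varphi_{j,l}$; this is precisely where the uniform derivative bounds on the resolution of unity and the dilation behaviour of $F$-spaces are both used, whereas everything else is bookkeeping on the Whitney decomposition. The same scheme can alternatively be run via the Leibniz rule and induction on $|\alpha|$, writing $\varphi_{j,l}\,\partial^\alpha f=\partial^\alpha(\varphi_{j,l}f)-\sum_{0<\beta\leq\alpha}\binom{\alpha}{\beta}\partial^\beta\varphi_{j,l}\,\partial^{\alpha-\beta}f$, where the extra factors $2^{j|\beta|}$ coming from $\partial^\beta\varphi_{j,l}$ are absorbed by the gain from lowering the smoothness by $|\beta|$, again through the rescaling argument.
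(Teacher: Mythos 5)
Your argument is correct, but it follows a genuinely different route from the paper: the paper does not prove the lemma at all, it simply refers to \cite[Proposition 3.22]{Scharf}, where the statement is obtained from the wavelet decomposition of $F^{s,\text{rloc}}_{p,q}(D)$ (the same source the paper uses for the characterization \eqref{eq-refined-weight}). You instead work directly from Definition \ref{def-rloc}: the fattened cut-off $\Phi_{j,l}=\sum_{(j',l')\in N(j,l)}\varphi_{j',l'}$ is indeed identically $1$ on all of $2Q_{j,k_l}$ (any $\varphi_{j',l'}$ not vanishing there has index in $N(j,l)$, and the partition sums to $1$ on $D$), which legitimizes the key identity $\varphi_{j,l}\partial^\alpha f=\varphi_{j,l}\partial^\alpha(\Phi_{j,l}f)$ and simultaneously shows that $\varphi_{j,l}\partial^\alpha f$ is a well-defined tempered distribution; the comparability of levels of intersecting Whitney cubes and the bounded overlap you invoke are standard and are also used in the paper (Step 1 of Theorem \ref{thm-localize}). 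Your ingredient (b) is exactly the homogeneity property of $F$-spaces for functions supported in small cubes, valid at smoothness $s-|\alpha|>\sigma_{p,q}$ (see \cite{SchnVyb}, which the paper itself uses in Theorem \ref{thm-embedding}); combined with the translation invariance of the norms, the uniform bounds $|\partial^\gamma\varphi_{j,l}|\leq c_\gamma 2^{j|\gamma|}$, and the observation that for the finitely many coarse levels no rescaling is needed, it yields the $(j,l)$-uniform multiplier constant, and the rest is the bookkeeping you describe. What the two approaches buy: the citation route inherits the result (together with \eqref{eq-refined-weight}) from a single wavelet-based theory and avoids the uniformity analysis, while your proof is elementary and self-contained, needing only the classical lift and pointwise-multiplier theorems on $\R^d$ plus the homogeneity property, at the price of carrying out the Whitney-overlap and rescaling estimates by hand.
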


For a proof, we refer to \cite[Proposition 3.22]{Scharf}.  We can now establish a relation between the Kondratiev and the refined localization spaces as follows. 

\begin{thm}\label{thm-Kmm}
	Let $m\in \N$,  $1<p<\infty$, and let $S\subset\R^d$ be an arbitrary closed set such that $|S|=0$. For the domain
	$D=\R^d\setminus S$ we then have
	\[
		\calk^m_{m,p}(D)=F^{m,\text{rloc}}_{p,2}(D)
	\]
	in the sense of equivalent norms. Moreover,
	\begin{equation}\label{eq-equivalent-norm}
		\|u|\calk^m_{m,p}(D)\|^\#
			=\sum_{|\alpha|=m}\|\partial^\alpha u|L_p(D)\|+\|\rho^{-m}u|L_p(D)\|
	\end{equation}
	defines an equivalent norm on $\calk^m_{m,p}(D)$.
\end{thm}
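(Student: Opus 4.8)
The plan is to bracket $\calk^m_{m,p}(D)$ by $F^{m,\text{rloc}}_{p,2}(D)$ from both sides, playing Lemma~\ref{lemma-rloc-derivative} and the weighted-norm characterization \eqref{eq-refined-weight} against the Whitney decomposition \eqref{whitney-decomp} underlying Definition~\ref{def-rloc}; the reduced norm \eqref{eq-equivalent-norm} then falls out from a Gagliardo--Nirenberg argument once the identity is established. Since $1<p<\infty$ we have $\sigma_{p,2}=0$, so $F^{s,\text{rloc}}_{p,2}(D)$ is defined for all $s>0$, and $F^m_{p,2}(\R^d)=W^m_p(\R^d)$. I use throughout that $\rho>0$ on $D$, $\rho\le1$, $\rho\sim\dist(\cdot,\partial D)$ near $\partial D=S$ and is two-sided bounded on compacta of $D$; in particular $\rho\lesssim2^{-j}$ on $2Q_{j,k_l}$, so $2^{j|\gamma|}\lesssim\rho^{-|\gamma|}$ on $\supp\varphi_{j,l}$, and $\delta$ may be replaced by $\rho$ in \eqref{eq-refined-weight} (Remark~\ref{remark-rloc}(iv)).

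\textbf{The embedding $F^{m,\text{rloc}}_{p,2}(D)\hookrightarrow\calk^m_{m,p}(D)$.} Let $f$ be in the left-hand space. For $|\alpha|<m$, Lemma~\ref{lemma-rloc-derivative} (applicable as $m-|\alpha|>\sigma_{p,2}$) yields $\partial^\alpha f\in F^{m-|\alpha|,\text{rloc}}_{p,2}(D)$ with norm $\lesssim\|f|F^{m,\text{rloc}}_{p,2}(D)\|$, and \eqref{eq-refined-weight} with $s=m-|\alpha|$ then controls the Kondratiev term $\|\rho^{|\alpha|-m}\partial^\alpha f|L_p(D)\|$. For $|\alpha|=m$ the weight is $\equiv1$, and here I use $F^{m,\text{rloc}}_{p,2}(D)\hookrightarrow F^m_{p,2}(D)$ from Remark~\ref{remark-rloc}(iv): an extension $g\in F^m_{p,2}(\R^d)=W^m_p(\R^d)$ of $f$ satisfies $\|\partial^\alpha f|L_p(D)\|\le\|g|W^m_p(\R^d)\|$, so $\|\partial^\alpha f|L_p(D)\|\le\|f|F^m_{p,2}(D)\|\lesssim\|f|F^{m,\text{rloc}}_{p,2}(D)\|$. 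Summing the finitely many $|\alpha|\le m$ gives the embedding (and in passing shows $f$ has $m$ weak derivatives on $D$).

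\textbf{The embedding $\calk^m_{m,p}(D)\hookrightarrow F^{m,\text{rloc}}_{p,2}(D)$ and the identity.} Let $u$ be in the Kondratiev space; since $\rho$ is two-sided bounded on compacta, $u\in W^m_{p,\mathrm{loc}}(D)$, so each $\varphi_{j,l}u$ is compactly supported in $D$ and extends by zero to an element of $W^m_p(\R^d)=F^m_{p,2}(\R^d)$. Expanding $\partial^\beta(\varphi_{j,l}u)$ by the Leibniz rule, bounding $|\partial^\gamma\varphi_{j,l}|\lesssim\rho^{-|\gamma|}$ on $\supp\varphi_{j,l}$, and using $\rho^{-|\gamma|}\le\rho^{|\beta-\gamma|-m}$ on $D$ whenever $\gamma\le\beta$, $|\beta|\le m$ (this is where the choice $a=m$ is essential), one obtains
\[
	\|\varphi_{j,l}u|F^m_{p,2}(\R^d)\|^p\ \lesssim\ \sum_{|\beta|\le m}\ \sum_{\gamma\le\beta}\ \|\rho^{|\beta-\gamma|-m}\partial^{\beta-\gamma}u|L_p(2Q_{j,k_l})\|^p .
\]
Summing over $l$ and $j$ and invoking the bounded overlap of $\{2Q_{j,k_l}\}$ collapses the right-hand side to $\lesssim\|u|\calk^m_{m,p}(D)\|^p$. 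Together with the previous step this proves $\calk^m_{m,p}(D)=F^{m,\text{rloc}}_{p,2}(D)$ with equivalent norms.

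\textbf{The reduced norm and the main obstacle.} Clearly $\|u|\calk^m_{m,p}(D)\|^\#\lesssim\|u|\calk^m_{m,p}(D)\|$ (it is a subsum: the top-order weights are $1$, the zero-order weight is $\rho^{-m}$). Conversely, by the identity just proved and \eqref{eq-refined-weight} for $s=m$, $\|u|\calk^m_{m,p}(D)\|\sim\|u|F^m_{p,2}(D)\|+\|\rho^{-m}u|L_p(D)\|$; since $|S|=0$, the restriction-space definition forces the a.e.\ extension $\widetilde u$ of $u\in F^m_{p,2}(D)$ into $F^m_{p,2}(\R^d)=W^m_p(\R^d)$ with $\|u|F^m_{p,2}(D)\|=\|\widetilde u|W^m_p(\R^d)\|$, and the classical Gagliardo--Nirenberg equivalent norm on $\R^d$ bounds this by $\|\widetilde u|L_p(\R^d)\|+\sum_{|\alpha|=m}\|\partial^\alpha\widetilde u|L_p(\R^d)\|=\|u|L_p(D)\|+\sum_{|\alpha|=m}\|\partial^\alpha u|L_p(D)\|\le\|u|\calk^m_{m,p}(D)\|^\#$, the last step using $\rho\le1$ to absorb $\|u|L_p(D)\|\le\|\rho^{-m}u|L_p(D)\|$. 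This yields \eqref{eq-equivalent-norm}. The real difficulty sits in the fact that $D=\R^d\setminus S$ need not be anywhere near Lipschitz, so that functions in $F^{m,\text{rloc}}_{p,2}(D)$ a priori need not extend across $S$; this ``removability of $S$'' is exactly what \eqref{eq-refined-weight} (hence ultimately \cite{Scharf}) provides, and the remaining work is the bookkeeping that matches it to the Kondratiev weights — which succeeds precisely because the weight exponent $a=m$ equals the smoothness. Secondary nuisances are that Lemma~\ref{lemma-rloc-derivative} fails at the endpoint $|\alpha|=m$, forcing the separate top-order argument, and that the unbounded scale $j=0$ of the Whitney decomposition (where $\rho\sim1$) needs a little care.
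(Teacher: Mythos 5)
Your proposal is correct, and for two of the three steps it coincides with the paper's own argument: the reverse embedding $F^{m,\text{rloc}}_{p,2}(D)\hookrightarrow\calk^m_{m,p}(D)$ is handled exactly as in the paper (Lemma \ref{lemma-rloc-derivative} plus \eqref{eq-refined-weight} for $|\alpha|<m$, and the identification of $F^m_{p,2}(D)$ with $W^m_p(\R^d)$ -- legitimate since $|S|=0$ -- for $|\alpha|=m$), and your ``Gagliardo--Nirenberg'' step for \eqref{eq-equivalent-norm} is the same reduction to the classical equivalent norm $\|u\|_{L_p}+\sum_{|\alpha|=m}\|\partial^\alpha u\|_{L_p}$ on $W^m_p(\R^d)$ that the paper uses in its Step 3. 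Where you genuinely diverge is the forward embedding $\calk^m_{m,p}(D)\hookrightarrow F^{m,\text{rloc}}_{p,2}(D)$: the paper gets it in one line from \eqref{eq-refined-weight} via the chain $F^m_{p,2}(D)\cong W^m_p(\R^d)\hookleftarrow\calk^m_{m,p}(D)$, which tacitly uses that a Kondratiev function on $\R^d\setminus S$ extends across the null set $S$ with weak derivatives in $L_p(\R^d)$ (a removability fact the paper does not spell out), whereas you estimate the rloc quasi-norm directly from Definition \ref{def-rloc}: zero-extension of each $\varphi_{j,l}u$, Leibniz rule, $|\partial^\gamma\varphi_{j,l}|\lesssim 2^{j|\gamma|}\lesssim\rho^{-|\gamma|}$ on $\supp\varphi_{j,l}$, the inequality $\rho^{-|\gamma|}\le\rho^{|\beta-\gamma|-m}$ (this is where $a=m$ and $\rho\le1$ enter), and the bounded overlap of the Whitney cubes. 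Your computation is in effect the $a=m$ case of the localization argument the paper only develops later in Theorem \ref{thm-localize}. The trade-off: the paper's route is shorter once \eqref{eq-refined-weight} is granted but leans on the unproved extension claim $\calk^m_{m,p}(D)\hookrightarrow W^m_p(\R^d)$; your route is more hands-on and self-contained for this direction, sidestepping the removability of $S$ there entirely (it resurfaces only in the reverse direction and in Step 3, where both you and the paper handle it through the regular-distribution identification $F^m_{p,2}(D)\cong F^m_{p,2}(\R^d)$). All parameter checks ($\sigma_{p,2}=0$, uniformity of constants in $j,l$, the $j=0$ layer) are in order.
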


\begin{proof}
	We first note that for these domains $D$ the weight function $\rho$ in the definition of the Kondratiev spaces
	$\calk^m_{m,p}(D)$ is equivalent to the distance function $\delta$ appearing in \eqref{eq-refined-weight}.
	Moreover, we observe that $S$ being closed already implies
	$\partial D=\overline{D}\setminus D\subset\R^d\setminus D=S$, so in particular also $|\partial D|=0$ (here
	$\partial D$ is treated as a subset of $\R^d$, i.e. also measured w.r.t. the $d$-dimensional Lebesgue measure).
	
	\vspace{3mm}
	
	{\bf Step 1:} The embedding $\calk^m_{m,p}(D)\hookrightarrow F^{m,\text{rloc}}_{p,2}(D)$ now is a direct
	consequence of the identity \eqref{eq-refined-weight}. On the one hand, we obviously have
	$\|\rho^{-m}u|L_p(D)\|\leq\|u|\calk^m_{m,p}(D)\|$, and on the other hand we make use of the observation
	$F^m_{p,2}(D)\equiv F^m_{p,2}(\R^d\setminus S)\cong F^m_{p,2}(\R^d)$. The latter follows from the fact that
	due to $|\partial D|=0$ and $F^m_{p,2}(\R^d)\hookrightarrow F^0_{p,2}(\R^d)=L_p(\R^d)$ the space $F^m_{p,2}(D)$
	consists of regular distributions on $D$, which obviously can be identified with the ones on $\R^d$. The embedding then follows
	from $F^m_{p,2}(D)\cong W^m_p(\R^d)\hookleftarrow\calk^m_{m,p}(D)$.
	
	\vspace{3mm}
		
	{\bf Step 2:} For the reverse embedding, we shall apply Lemma \ref{lemma-rloc-derivative}. The estimate
	\eqref{eq-derivative} together with \eqref{eq-refined-weight} particularly implies
	\[
		\|\rho^{|\alpha|-m}\partial^\alpha u|L_p(D)\|\lesssim \|u|F^{m,\text{rloc}}_{p,2}(D)\|
	\]
	for all functions $u\in F^{m,\text{rloc}}_{p,2}(D)$ and all multiindices with $|\alpha|<m$. Moreover, we once
	more use the observation that we can identify $F^m_{p,2}(D)$ with $W^m_p(\R^d)$, which in view of
	\eqref{eq-refined-weight} now yields
	\[
		\|\partial^\alpha u|L_p(D)\|
			\leq\|u|W^m_p(\R^d)\|\lesssim\|u|F^m_{p,2}(D)\|
			\lesssim\|u|F^{m,\text{rloc}}_{p,2}(D)\|\,,\qquad
			|\alpha|=m\,.
	\]
	Altogether this proves $F^{m,\text{rloc}}_{p,2}(D)\hookrightarrow\calk^m_{m,p}(D)$.
	
	\vspace{3mm}
		
	{\bf Step 3:} The norm equivalence for $\calk^m_{m,p}(D)$ now is a consequence of the corresponding
	norm equivalence for $W^m_p(\R^d)$: Recall $F^m_{p,2}(D)=W^m_p(\R^d)$, and
	\[
		\|u|W^m_p(\R^d)\|\sim\|u|L_p(\R^d)\|+\sum_{|\alpha|=m}\|\partial^\alpha u|L_p(\R^d)\|
	\]
	together with $\|u|L_p(D)\|\leq\|\rho^{-m}u|L_p(D)\|$ due to $0<\rho(x)\leq 1$ for all $x\in D$. This yields
	$\|u|\calk^m_{m,p}(D)\|\sim\|u|F^{m,\text{rloc}}_{p,2}(D)\|\lesssim\|u|\calk^m_{m,p}(D)\|^\#$. The reverse
	inequality is obvious.
\end{proof}

\begin{remark}
	The result fails for $F^{m,\text{rloc}}_{p,q}(D)$ when  $q\neq 2$: While the arguments in the first step remain true
	for $q\geq 2$ and those in Step 2 for $q\leq 2$, we indeed have equivalence for $q=2$ only.
\end{remark}

\begin{cor}\label{cor-Kmm}
	The results of Theorem \ref{thm-Kmm} notably apply to domains $\R^d\setminus S$, where $S$ is the singular set
	of some polytope in $D\subset\R^d$. In particular, we have
	\[
		\calk^m_{m,p}(\R^d\setminus S)=F^{m,\text{rloc}}_{p,2}(\R^d\setminus S)
	\]
	as well as an embedding
	\begin{equation}\label{embedding-Kmm}
		\calk^m_{m,p}(D)\hookrightarrow F^m_{p,2}(D)\,.
	\end{equation}
\end{cor}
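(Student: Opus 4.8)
The plan is to treat the two assertions separately: the identity is a direct specialization of Theorem~\ref{thm-Kmm}, while the embedding is obtained by transporting that identity from the model domain $\R^d\setminus S$ to the polytope $D$ via the Stein extension operator. For the identity, I would first record that the singular set $S$ of a polytope $D\subset\R^d$ is closed (a finite union of vertices in the planar case, together with edges in the spatial case) and is a Lebesgue null set, $|S|=0$. These are precisely the hypotheses of Theorem~\ref{thm-Kmm}, so applying that theorem with this choice of $S$ immediately yields $\calk^m_{m,p}(\R^d\setminus S)=F^{m,\text{rloc}}_{p,2}(\R^d\setminus S)$ in the sense of equivalent norms, with no further work.

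For the embedding \eqref{embedding-Kmm} the essential point is that on the polytope $D$ the Kondratiev weight $\rho$ measures the distance to the singular set $S$ only, not to the full boundary $\partial D$; consequently one cannot expect the identity of Theorem~\ref{thm-Kmm} to persist with $D$ in place of $\R^d\setminus S$, since the refined localization weight $\delta$ on $D$ involves all of $\partial D$. The device bridging this gap is the boundedness of the Stein extension operator $\mathfrak{E}\colon\calk^m_{m,p}(D)\to\calk^m_{m,p}(\R^d\setminus S)$ recalled earlier from \cite{Hansen1}: both spaces carry the weight $\sim\dist(\cdot,S)$, so the extension lands in exactly the model space governed by the identity. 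Concretely, for $u\in\calk^m_{m,p}(D)$ I would chain
\[
	\|u|F^m_{p,2}(D)\|
		\leq\|\mathfrak{E}u|F^m_{p,2}(\R^d)\|
		\lesssim\|\mathfrak{E}u|F^{m,\text{rloc}}_{p,2}(\R^d\setminus S)\|
		\sim\|\mathfrak{E}u|\calk^m_{m,p}(\R^d\setminus S)\|
		\lesssim\|u|\calk^m_{m,p}(D)\|\,.
\]
The first inequality is the definition of the restriction (quasi-)norm on $D$, using that $\mathfrak{E}u$ is an admissible extension with $(\mathfrak{E}u)|_D=u$ together with the identification $F^m_{p,2}(\R^d\setminus S)\cong F^m_{p,2}(\R^d)$ from the proof of Theorem~\ref{thm-Kmm} (valid since $|S|=0$); the second step is the embedding $F^{m,\text{rloc}}_{p,2}\hookrightarrow F^m_{p,2}$ of Remark~\ref{remark-rloc}(iv); the third is the norm equivalence of Theorem~\ref{thm-Kmm}; and the last is the boundedness of $\mathfrak{E}$.

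I expect the only genuinely delicate point to be the bookkeeping of weights: one must ensure that the weight in $\calk^m_{m,p}(D)$ is the regularized distance to $S$, so that $\mathfrak{E}$ maps into $\calk^m_{m,p}(\R^d\setminus S)$ with the same weight rather than into a space weighted by the distance to $\partial D$. Once this is settled, every remaining step is a citation and the argument closes. As an alternative for Lipschitz polytopes one may avoid the extension operator altogether, noting that $0<\rho\leq1$ forces $\rho^{|\alpha|-m}\geq1$ for $|\alpha|\leq m$, whence $\calk^m_{m,p}(D)\hookrightarrow W^m_p(D)=F^m_{p,2}(D)$ directly; the extension argument nonetheless has the advantage of remaining valid without assuming Lipschitz regularity of $D$.
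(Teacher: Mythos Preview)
Your proposal is correct and follows essentially the same route as the paper. For the identity you apply Theorem~\ref{thm-Kmm} directly (the paper explicitly confirms this suffices, though it additionally records a localization to $\R^d\setminus\R^\ell$ ``for future reference''), and for the embedding your chain of inequalities via the Stein extension operator is verbatim the paper's Step~2; your closing remark about the elementary alternative $\calk^m_{m,p}(D)\hookrightarrow W^m_p(D)=F^m_{p,2}(D)$ on Lipschitz polytopes is a nice aside not present in the paper.
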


\begin{proof}
	Though the identity for the domain $\R^d\setminus S$ is fully covered by Theorem \ref{thm-Kmm}, for future
	reference we include a proof reducing this situation to the particular case of domains $\R^d\setminus\R^\ell$.
	
	\vspace{3mm}
		
	{\bf Step 1:} The mentioned localization procedure is similar to the one used in \cite[Appendix A]{Hansen1}. The
	idea is to consider a suitable covering of the singular set by (finitely) many open sets
	$U_1,\ldots,U_N\subset\R^d$. This cover of $S$ is chosen in such a way that every set $U_i$ can
	be identified (after rotation and translation) by a corresponding neighborhood
	$\widetilde U_i\subset\R^d\setminus\R^\ell$ for some $\ell$, i.e., the distance function $\rho_S$ on $U_i$
	coincides with $\rho_{\R^\ell}$ on $\widetilde U_i$. This cover of $S$ then is to be extended with additional
	finitely many open sets $U_{N+1},\ldots,U_M$ to an open cover of $\overline{D}$. On these sets
	$U_{N+1},\ldots,U_M$ the distance function $\rho_S$ shall be bounded from below. Additionally, we require a
	resolution of unity $\{\varphi_j\}_{j=1,\ldots,M}$ w.r.t. $\{U_j\}_{j=1,\ldots,M}$. Accordingly, we decompose
	$f=\sum_{j=1}^M\varphi_j f$, and find by the (quasi-)triangle inequality and Lemma \ref{lemma-pointwise}
	\[
		\|f|F^{m,\text{rloc}}_{p,2}(\R^d\setminus S)\|
			\sim\sum_{j=1}^M\|\varphi_j f|F^{m,\text{rloc}}_{p,2}(\R^d\setminus S)\|\,,
			\qquad f\in F^{m,\text{rloc}}_{p,2}(\R^d\setminus S)\,.
	\]
	By choice of the cover $\{U_j\}_{j=1,\dots,M}$, the terms
	$\|\varphi_j f|F^{m,\text{rloc}}_{p,2}(\R^d\setminus S)\|$ after translation and rotation correspond to Theorem
	\ref{thm-Kmm} for $D=\R^d\setminus\R^\ell$. In view of \eqref{eq-refined-weight} the quasi-norm in
	$F^{m,\text{rloc}}_{p,2}(\R^d\setminus S)$ is invariant under translation and rotation as distances are preserved (note that 
	  this also follows from the considerations for general diffeomorphisms in Section \ref{ssec-diffeo}).
		
	In particular: If $D\subset\R^2$ is a polygon (or a Lipschitz domain with polygonal structure), then
	$S$ consists of finitely many points, which trivially can be covered by $N=\# S$ many, pairwise
	disjoint open sets $U_i$. In case of a polyhedral domain $D\subset\R^3$, the situation is a little more diverse.	
	The cover of $S$ then consists of three types of open sets: The first one covering the interior of exactly one edge
	each, but staying away from all vertices. which clearly corresponds to the case $D=\R^3\setminus\R$. To describe
	the other two types, let $A\in S$ be a vertex, and $\Gamma_1,\ldots,\Gamma_n$ edges with endpoint in $A$. Then
	for every $j$ we can find a cone $C_{A,\Gamma_j}$ with vertex in $A$ and axis $\Gamma_j$ with sufficiently
	small height and opening angle, so that no two such cones intersect. Clearly, in any such cone the distance to $S$ is
	exactly the distance to the axis of the cone (the intersection with $S$ is just the edge $\Gamma_j$). Finally, let
	$\widetilde B_A$ be a ball around $A$ with sufficiently small radius, and denote by
	$\widetilde C_{A,\Gamma_j}$ a cone with half the opening angle of $C_{A,\Gamma_j}$. As the last type of
	neighborhoods we define $B_A$ to be the interior of
	$\widetilde B_A\setminus\bigcup_j\widetilde C_{A,\Gamma_j}$. Then on $B_A$ the distance to $S$ is
	equivalent to the distance to $A$.
	
	\vspace{3mm}
	
	{\bf Step 2:} The embedding \eqref{embedding-Kmm} now is an immediate consequence of the identity for
	$\R^d\setminus S$ and the boundedness of Stein's extension operator $\mathfrak{E}$ on $\calk^m_{m,p}$,
	\[
		\|u|F^m_{p,2}(D)\|
			\leq\|\mathfrak{E}u|F^m_{p,2}(\R^d\setminus S)\|
			\lesssim\|\mathfrak{E}u|F^{m,\text{rloc}}_{p,2}(\R^d\setminus S)\|
			\sim\|\mathfrak{E}u|\calk^m_{m,p}(\R^d\setminus S)\|
			\lesssim\|u|\calk^m_{m,p}(D)\|
	\]
	for all $u\in\calk^m_{m,p}(D)$.
\end{proof}

Unfortunately, the equivalence of the norm \eqref{eq-equivalent-norm} does not immediately extend to the spaces $\calk^m_{a,p}(D)$ for parameters $a\neq m$. However, with the help of the following lemma, we can give a partial analogue. The lemma is of interest on its own, as it relates Kondratiev spaces for different weight parameters $a$.

\begin{lemma}\label{lemma-Kma}
	Let $1<p<\infty$, $a\in\R$,  and $m\in\N$. Define the multiplication operator $T: u\mapsto\rho^{m-a}u$. Then this
	operator is an isomorphism $T:\calk^m_{a,p}(D)\rightarrow\calk^m_{m,p}(D)$, where $D$ is either a polytope or
	of the form $\R^d\setminus S$ as before. Its inverse is the mapping $u\mapsto\rho^{a-m}u$. Consequently, the
	mapping $u\mapsto\rho^{a'-a}u$ is an isomorphism from $\calk^m_{a,p}(D)$ onto $\calk^m_{a',p}(D)$ for every
	$a'\in\R$.
\end{lemma}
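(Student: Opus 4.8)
The plan is to show that $T\colon u\mapsto\rho^{m-a}u$ maps $\calk^m_{a,p}(D)$ boundedly into $\calk^m_{m,p}(D)$ and, symmetrically, that $u\mapsto\rho^{a-m}u$ maps $\calk^m_{m,p}(D)$ boundedly into $\calk^m_{a,p}(D)$; since these two maps are mutually inverse as pointwise operations, this gives the isomorphism. First I would reduce everything to a pointwise/Leibniz estimate on derivatives: for a multiindex $\beta$ with $|\beta|\le m$, the Leibniz rule gives $\partial^\beta(\rho^{m-a}u)=\sum_{\gamma\le\beta}\binom{\beta}{\gamma}(\partial^{\beta-\gamma}\rho^{m-a})(\partial^\gamma u)$, and the key fact is that the regularized distance $\rho$ satisfies $|\partial^\delta\rho^{c}(x)|\lesssim\rho(x)^{c-|\delta|}$ for every exponent $c\in\R$ and every $\delta\in\N_0^d$, with constants depending only on $c$, $\delta$, $d$ and $D$. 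This is a standard property of Stein's regularized distance function (\cite[Section VI.3]{Stein}), valid both for polytopes and for the sets $\R^d\setminus S$ considered here. Granting this, each summand is bounded pointwise by $\rho^{m-a-|\beta-\gamma|}|\partial^\gamma u|=\rho^{(m-|\beta|)-(a-|\gamma|)}|\partial^\gamma u|$, so that $\rho^{|\beta|-m}|\partial^\beta(\rho^{m-a}u)|\lesssim\sum_{|\gamma|\le|\beta|}\rho^{|\gamma|-a}|\partial^\gamma u|$ pointwise.

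Raising to the $p$-th power, integrating over $D$, and summing over $|\beta|\le m$ then yields
\[
	\|\rho^{m-a}u\,|\,\calk^m_{m,p}(D)\|\lesssim\|u\,|\,\calk^m_{a,p}(D)\|\,,
\]
which is the boundedness of $T$. Running the same computation with the roles of $m$ and $a$ swapped — i.e. applying it to the exponent $a-m$ in place of $m-a$ — gives the boundedness of the inverse map $u\mapsto\rho^{a-m}u$ from $\calk^m_{m,p}(D)$ to $\calk^m_{a,p}(D)$. Since $(\rho^{m-a}u)\,\rho^{a-m}=u$ and $(\rho^{a-m}v)\,\rho^{m-a}=v$ pointwise (using $0<\rho\le 1$ on $D$, so no division-by-zero issues arise on the domain), $T$ is an isomorphism with the stated inverse. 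For the final assertion, the map $u\mapsto\rho^{a'-a}u$ factors as the composition $\calk^m_{a,p}(D)\xrightarrow{\,\rho^{m-a}\cdot\,}\calk^m_{m,p}(D)\xrightarrow{\,\rho^{a'-m}\cdot\,}\calk^m_{a',p}(D)$, both factors being isomorphisms by what was just proved; alternatively one simply repeats the Leibniz estimate directly with exponent $a'-a$.

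The only genuine obstacle is justifying the derivative bound $|\partial^\delta\rho^c|\lesssim\rho^{c-|\delta|}$; everything else is bookkeeping with the Leibniz rule and the triangle inequality in $\ell_p$. For $|\delta|\ge 1$ this follows by combining the Faà di Bruno formula applied to $t\mapsto t^c$ with the estimates $|\partial^\delta\rho(x)|\lesssim\rho(x)^{1-|\delta|}$ satisfied by the regularized distance itself (these are precisely Stein's defining properties of $\rho$, see \cite{Stein} and also \cite{HS24}); for $|\delta|=0$ it is trivial. One should note that this is exactly the mechanism by which the Kondratiev norm \eqref{K-norm-domains} is designed to interact well with multiplication by powers of $\rho$, so the lemma is in some sense built into the definition. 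Care is only needed to confirm that the implied constants may be taken uniform on the whole (possibly unbounded) domain $D=\R^d\setminus S$, which holds because the relevant estimates for $\rho_S$ are local and scale-invariant in the sense made precise in \cite{Stein}.
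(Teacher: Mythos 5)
Your proposal is correct and follows essentially the same route as the paper: a Leibniz-rule expansion of $\partial^\alpha(\rho^{m-a}u)$ combined with Stein's estimates $|\partial^\delta(\rho^\gamma)|\lesssim\rho^{\gamma-|\delta|}$ for the regularized distance, then integrating and summing over $|\alpha|\le m$. The paper's proof is the same computation (it merely states the power estimate as a consequence of $|\partial^\delta\rho|\lesssim\rho^{1-|\delta|}$ without invoking Fa\`a di Bruno explicitly, and treats the boundedness of the inverse and the final composition as immediate by symmetry, just as you do).
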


\begin{proof}
	Though the result is known, we include its (simple) proof for the sake of completeness.
	Since the invertibility of $T$ is obvious, it remains to prove its boundedness. The proof is based on estimates for
	derivatives of the regularized distance derived by Stein \cite[Theorem VI.2]{Stein}: For every multiindex $\alpha$
	it holds
	\[
		|\partial^\alpha\rho(x)|\leq B_\alpha\bigl(\delta(x)\bigr)^{1-|\alpha|}
			\leq A_\alpha\bigl(\rho(x)\bigr)^{1-|\alpha|}\,,
	\]
	where $\delta(x)=\text{dist}(x,S)$, with constants independent of $S$. From this, corresponding estimates can be
	obtained for powers of $\rho$,
	\[
		\bigl|\partial^\alpha\bigl(\rho(x)^\gamma\bigr)\bigr|
			\leq A_{\alpha,\gamma}\bigl(\rho(x)\bigr)^{\gamma-|\alpha|}\,.
	\]	
	For every term $\partial^\alpha(\rho^{m-a}u)$,  $|\alpha|\leq m$,   we then can argue using Leibniz rule
	\begin{align*}
		\biggl(
			&\int_D\bigl|\rho^{|\alpha|-m}\partial^\alpha(\rho^{m-a}u)\bigr|^p dx\biggr)^{1/p}\\
			&\leq\sum_{\beta\leq\alpha}\binom{\alpha}{\beta}A_{\alpha-\beta,m-a}\biggl(\int_D
				\bigl|\rho^{|\alpha|-m}\rho^{m-a-|\alpha-\beta|}\partial^\beta u\bigr|^p dx\biggr)^{1/p}\\
			&=\sum_{\beta\leq\alpha}\binom{\alpha}{\beta}A_{\alpha-\beta,m-a}\biggl(\int_D
				\bigl|\rho^{|\beta|-a}\partial^\beta u\bigr|^p dx\biggr)^{1/p}
				\lesssim\|u|\calk^m_{a,p}(D)\|\,,
	\end{align*}
	observe $|\alpha-\beta|=|\alpha|-|\beta|$ here.
\end{proof}

Combining this lemma with Theorem \ref{thm-Kmm} we immediately obtain: 

\begin{cor}\label{cor-equiv-norm}
	Let $1<p<\infty$, $a\in\R$,  and $m\in\N$.  The mapping
	\begin{equation*}
		\|u|\calk^m_{a,p}(D)\|^\#
			=\sum_{|\alpha|=m}\|\partial^\alpha(\rho^{m-a}u)|L_p(D)\|+\|\rho^{-a}u|L_p(D)\|
	\end{equation*}
	defines an equivalent norm on $\calk^m_{a,p}(D)$.
\end{cor}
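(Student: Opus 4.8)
The plan is to combine Lemma~\ref{lemma-Kma} with the norm equivalence~\eqref{eq-equivalent-norm} from Theorem~\ref{thm-Kmm}. Concretely, let $T\colon u\mapsto\rho^{m-a}u$ be the multiplication operator; by Lemma~\ref{lemma-Kma} it is an isomorphism from $\calk^m_{a,p}(D)$ onto $\calk^m_{m,p}(D)$, so that $\|u|\calk^m_{a,p}(D)\|\sim\|\rho^{m-a}u|\calk^m_{m,p}(D)\|$ with constants depending only on $m$, $a$, $p$. Next I would apply the equivalent norm $\|\cdot|\calk^m_{m,p}(D)\|^\#$ from Theorem~\ref{thm-Kmm} to the function $v=\rho^{m-a}u$: its second summand becomes $\|\rho^{-m}v|L_p(D)\|=\|\rho^{-m}\rho^{m-a}u|L_p(D)\|=\|\rho^{-a}u|L_p(D)\|$, using the elementary identity $\rho^{-m}\rho^{m-a}=\rho^{-a}$, while the top-order part is literally $\sum_{|\alpha|=m}\|\partial^\alpha(\rho^{m-a}u)|L_p(D)\|$. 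Hence $\|\rho^{m-a}u|\calk^m_{m,p}(D)\|^\#=\|u|\calk^m_{a,p}(D)\|^\#$, and chaining the two equivalences yields
\[
	\|u|\calk^m_{a,p}(D)\|\ \sim\ \|\rho^{m-a}u|\calk^m_{m,p}(D)\|\ \sim\ \|\rho^{m-a}u|\calk^m_{m,p}(D)\|^\#\ =\ \|u|\calk^m_{a,p}(D)\|^\#\,,
\]
which is the assertion.

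It then remains to record a few routine points. First, one should check that $\|\cdot|\calk^m_{a,p}(D)\|^\#$ is genuinely a norm: each summand is a seminorm, the triangle inequality follows from Minkowski's inequality in $L_p$ (legitimate since $1<p<\infty$), and definiteness is a consequence of the equivalence with $\|\cdot|\calk^m_{a,p}(D)\|$ just established. Second, Theorem~\ref{thm-Kmm}, hence~\eqref{eq-equivalent-norm}, is formulated for $D=\R^d\setminus S$; for $D$ a polytope the corresponding equivalence for $\calk^m_{m,p}(D)$ follows from Corollary~\ref{cor-Kmm} together with the boundedness of Stein's extension operator exactly as in the proof of that corollary, so that the argument above applies verbatim in both cases covered by Lemma~\ref{lemma-Kma}. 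I do not expect a genuine obstacle here: the statement is essentially a bookkeeping consequence of Lemma~\ref{lemma-Kma} and Theorem~\ref{thm-Kmm}, the only points requiring (minimal) care being the exponent identity $\rho^{-m}\rho^{m-a}=\rho^{-a}$ and the observation that the top-order terms of the two $\#$-norms coincide without any further modification.
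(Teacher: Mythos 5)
Your argument is exactly the paper's: the corollary is stated there as an immediate consequence of combining Lemma \ref{lemma-Kma} (the isomorphism $u\mapsto\rho^{m-a}u$) with the equivalent norm \eqref{eq-equivalent-norm} from Theorem \ref{thm-Kmm}, which is precisely the chain of equivalences you write down. Your additional remarks on the norm axioms and on the domain case of a polytope are harmless extra bookkeeping that the paper leaves implicit.
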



\subsection{Localization of Kondratiev spaces}
\label{ssec-localization}

Additionally to the localization procedure presented when re-proving Corollary \ref{cor-Kmm}, where we did show how to  reduce problems for Kondratiev spaces defined on  general domains of polygonal or polyhedral structure to the standard situation of  spaces on domains $D=\R^d\setminus\R^\ell$, we now present two further important localization results. 


The following lemma was the original incentive to investigate relations between Kondratiev spaces and localized versions of Triebel-Lizorkin spaces (recall that the usual Fourier-analytic definition of Triebel-Lizorkin spaces is based on dyadic decompositions of the Fourier transform). It can be found in \cite[Lemma 1.2.1]{mazja3} (originally formulated for a scale $V^{l,\beta}_p(K)$, which is related to our definition via
$V^{l,\beta}_p(K)=\calk^l_{l-\beta,p}(K)$).

\begin{lemma}
	Let $K\subset\R^d$ be a  cone centered at $0$, and let $(\varphi_j)_{j\in\Z}$ be a smooth dyadic decomposition of
	unity on $K$, i.e. $\varphi_j$ is a nonnegative $C^\infty$-function on $\overline K$ with
	\[
		\supp\varphi_j\subset K_j=\{x\in\overline K:2^{-j-1}\leq |x|\leq 2^{-j+1}\}\,,
		\qquad |\partial^\alpha\varphi_j|\leq c_\alpha 2^{j|\alpha|}\quad\forall\alpha\in\N^d_0\,,
	\]
	and $\sum_{j\in\Z}\varphi_j(x)=1$ for all $x\in K$. Then for all $m\in\N_0$, $1<p<\infty$,  and $a\in\R$ it holds
	\[
		\|u|\calk^m_{a,p}(K)\|^p
			\sim\sum_{j\in\Z}\|\varphi_j u|\calk^m_{a,p}(K)\|^p\,,\qquad u\in\calk^m_{a,p}(K)\,.
	\]
\end{lemma}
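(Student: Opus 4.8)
The plan is to exploit the scaling/dilation structure of the cone together with the fact that the Kondratiev norm is essentially a weighted Sobolev norm whose weight $\rho(x)\sim|x|$ on the cone is comparable to $2^{-j}$ on each dyadic shell $K_j$. First I would observe that on $K_j$ the weight $\rho(x)$ is equivalent to $2^{-j}$, so that
\[
	\|u|\calk^m_{a,p}(K)\|^p
		=\sum_{|\alpha|\le m}\int_K \bigl|\rho^{|\alpha|-a}\partial^\alpha u\bigr|^p\,dx
		\sim\sum_{j\in\Z}\sum_{|\alpha|\le m}2^{-jp(|\alpha|-a)}\int_{K_j}|\partial^\alpha u|^p\,dx\,,
\]
and similarly for each localized piece $\varphi_j u$. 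The key point behind this reduction is a Fefferman–Stein / Littlewood–Paley-type square-function argument: since each $x\in K$ lies in at most two (more generally, a bounded number of) shells $K_j$, summing over $j$ of $\int_{K_j}|\cdot|^p$ is comparable to $\int_K|\cdot|^p$, and the finite overlap of the supports $\operatorname{supp}\varphi_j$ is what makes the decomposition $u=\sum_j\varphi_j u$ behave like a direct sum in $L_p$.

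The inequality ``$\gtrsim$'' (the localized pieces control the whole) is the easy direction: from $u=\sum_j\varphi_j u$ and the bounded overlap of the supports, one gets by the (quasi-)triangle inequality $\|u|\calk^m_{a,p}(K)\|^p\lesssim\sum_j\|\varphi_j u|\calk^m_{a,p}(K)\|^p$, using only that $p<\infty$ and the locally finite sum. For the reverse inequality ``$\lesssim$'' I would apply the Leibniz rule to $\partial^\alpha(\varphi_j u)=\sum_{\beta\le\alpha}\binom{\alpha}{\beta}\partial^{\alpha-\beta}\varphi_j\,\partial^\beta u$ together with the derivative bound $|\partial^\gamma\varphi_j|\le c_\gamma 2^{j|\gamma|}$. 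On $\operatorname{supp}\varphi_j\subset K_j$ one has $2^{j|\gamma|}\sim\rho(x)^{-|\gamma|}$, so each term $\rho^{|\alpha|-a}\partial^{\alpha-\beta}\varphi_j\,\partial^\beta u$ is pointwise dominated (up to constants) by $\rho^{|\beta|-a}|\partial^\beta u|\cdot|\varphi_j|^{(\text{something bounded})}$; after raising to the $p$-th power, integrating, and summing over $j$, the finite-overlap property collapses $\sum_j\int_{K_j}$ back to $\int_K$, yielding $\sum_j\|\varphi_j u|\calk^m_{a,p}(K)\|^p\lesssim\|u|\calk^m_{a,p}(K)\|^p$. (Alternatively, one can first reduce to the case $a=m$ via the isomorphism $u\mapsto\rho^{m-a}u$ of Lemma \ref{lemma-Kma}, observing that this multiplier commutes with the localization up to harmless factors, since $\rho^{m-a}$ and its derivatives are controlled on each $K_j$ exactly as the cutoffs are.)

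The main obstacle I anticipate is purely bookkeeping rather than conceptual: one must be careful that the constants in all the shell-by-shell equivalences ($\rho(x)\sim 2^{-j}$ on $K_j$, the Leibniz bounds, the finite-overlap count) are uniform in $j$, which is exactly what the hypotheses on $(\varphi_j)_{j\in\Z}$ and the homogeneity of the cone guarantee. A secondary point to check is that no issue arises at the ``origin'' or at ``infinity'' of the cone — but since the sum over $j\in\Z$ is genuinely two-sided and $K$ is a full (infinite) cone centered at $0$, there are no boundary shells to treat separately, and the argument is clean. In short, the proof is a standard Whitney/Littlewood–Paley localization made possible by the comparability $\rho\sim 2^{-j}$ on $K_j$ and the bounded overlap of the $\varphi_j$.
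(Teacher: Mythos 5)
Your argument is correct. Note, however, that the paper does not prove this lemma itself: it is quoted from \cite[Lemma 1.2.1]{mazja3}, stated there for the scale $V^{l,\beta}_p(K)$. What you propose is in substance the same machinery the paper \emph{does} use to prove its Whitney-decomposition analogue, Theorem \ref{thm-localize}: the direction $\|u|\calk^m_{a,p}(K)\|^p\lesssim\sum_j\|\varphi_j u|\calk^m_{a,p}(K)\|^p$ from the locally finite sum $u=\sum_j\varphi_j u$ and bounded overlap, and the reverse direction from the Leibniz rule, the bound $|\partial^\gamma\varphi_j|\leq c_\gamma 2^{j|\gamma|}$, the near-constancy of $\rho$ on each shell, and the finite overlap of the $K_j$, which collapses $\sum_j\int_{K_j}$ back to $\int_K$. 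So your proof is a valid self-contained substitute for the citation. One small point to tidy: with the paper's normalization $\rho:D\to[0,1]$ one has $\rho\sim\min(2^{-j},1)$ on $K_j$ rather than $\rho\sim 2^{-j}$ for all $j\in\Z$ (the paper itself writes $\rho(x)\sim 2^{-j}$ only for $j\geq j_0$); this is harmless, since on shells with $j<0$ the required estimate $|\partial^\gamma\varphi_j|\leq c_\gamma 2^{j|\gamma|}\lesssim\rho(x)^{-|\gamma|}$ holds trivially because $2^{j|\gamma|}\leq 1$, and the same bookkeeping goes through verbatim if one instead works with the weight $|x|$ of the $V$-scale.
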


For Kondratiev spaces these localization procedures show two main advantages: The subdomains are chosen in such a way that the distance functions are essentially constant on them, i.e. for $j\geq j_0$ we have $\rho(x)\sim 2^{-j}$ on $K_j$. Moreover, simple dyadic scaling $x\mapsto 2^j x$ maps all subdomains $K_j$ onto one reference domain. The latter then allows to transfer results about the classical (unweighted) Sobolev spaces to Kondratiev spaces. In \cite{mazja3} these arguments  were particularly used to derive Sobolev-type embeddings for Kondratiev spaces.\\

The following theorem now deals with an  analogous  localization of  functions from Kondratiev spaces w.r.t. Whitney decompositions of the domain, as already encountered in the definition of the refined localization spaces.


\begin{thm}\label{thm-localize}
	Let $\{Q_{j,k_l}\}_{j\geq 0, l=1,\ldots,N_j}$ be a Whitney decomposition of $D=\R^d\setminus S$ as in
	\eqref{whitney-decomp} together with a corresponding decomposition of unity $\{\varphi_{j,l}\}$.  Then for all $m\in\N_0$, $1<p<\infty$,  and $a\in\R$ it holds
	\[
		\|u|\calk^m_{a,p}(D)\|^p
			\sim\sum_{j,l}\|\varphi_{j,l}u|\calk^m_{a,p}(D)\|^p
	\]
	for all $u\in\calk^m_{a,p}(D)$.
\end{thm}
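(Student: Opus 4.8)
The plan is to prove the two inequalities separately. Both rest on two standard features of the Whitney decomposition \eqref{whitney-decomp}: the dilated cubes $\{2Q_{j,k_l}\}_{j,l}$ have bounded overlap, i.e.\ every point of $D$ belongs to at most $C=C(d)$ of them (cf.\ Remark~\ref{remark-rloc}(i)), and on $\supp\varphi_{j,l}\subset 2Q_{j,k_l}$ the regularized distance is essentially constant, $\rho(x)\sim 2^{-j}$ uniformly in $j,l$. For the latter: if $x\in 2Q_{j,k_l}$, then combining $\dist(2Q_{j,k_l},\partial D)\sim 2^{-j}$ (respectively $\ge c$ for $j=0$) with $\diam 2Q_{j,k_l}\sim 2^{-j}$ gives $\dist(x,\partial D)\sim 2^{-j}$; moreover $\dist(x,\partial D)=\dist(x,S)$ for $x\in D$, since $\partial D\subset S$ and the point of $S=\R^d\setminus D$ nearest to an interior point of $D$ must lie on $\partial D$; finally $\rho$ is comparable to $\min(\dist(\cdot,S),1)$, which on $\supp\varphi_{j,l}$ is $\sim 2^{-j}$ (the finitely many levels $j$ for which $2^{-j}$ is not automatically $\le 1$ are absorbed into the constants).

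For $\sum_{j,l}\|\varphi_{j,l}u|\calk^m_{a,p}(D)\|^p\lesssim\|u|\calk^m_{a,p}(D)\|^p$ I would fix $\alpha$ with $|\alpha|\le m$, apply Leibniz' rule $\partial^\alpha(\varphi_{j,l}u)=\sum_{\beta\le\alpha}\binom{\alpha}{\beta}(\partial^{\alpha-\beta}\varphi_{j,l})(\partial^\beta u)$, and use $|\partial^\gamma\varphi_{j,l}|\lesssim 2^{j|\gamma|}$ together with $\rho\sim 2^{-j}$ on the support to obtain, pointwise on $\supp\varphi_{j,l}$,
\[
	\rho^{|\alpha|-a}\bigl|\partial^{\alpha-\beta}\varphi_{j,l}\bigr|\,\bigl|\partial^\beta u\bigr|
		\lesssim 2^{j(|\alpha|-|\beta|)}\rho^{|\alpha|-a}\bigl|\partial^\beta u\bigr|
		\sim\rho^{|\beta|-a}\bigl|\partial^\beta u\bigr|\,,
\]
exactly as in the proof of Lemma~\ref{lemma-Kma} (note $|\alpha-\beta|=|\alpha|-|\beta|$ for $\beta\le\alpha$). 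Raising to the $p$-th power and integrating over $2Q_{j,k_l}$ then bounds $\|\varphi_{j,l}u|\calk^m_{a,p}(D)\|^p$ by $\sum_{|\beta|\le m}\int_{2Q_{j,k_l}}|\rho^{|\beta|-a}\partial^\beta u|^p\,dx$; summing over $j,l$ and invoking the bounded overlap gives $\sum_{j,l}\int_{2Q_{j,k_l}}|\rho^{|\beta|-a}\partial^\beta u|^p\,dx\lesssim\int_D|\rho^{|\beta|-a}\partial^\beta u|^p\,dx$, which is the claim after summing over the finitely many $\beta$.

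For the converse $\|u|\calk^m_{a,p}(D)\|^p\lesssim\sum_{j,l}\|\varphi_{j,l}u|\calk^m_{a,p}(D)\|^p$ I would use $\sum_{j,l}\varphi_{j,l}\equiv 1$ on $D$, so that $\partial^\alpha u=\sum_{j,l}\partial^\alpha(\varphi_{j,l}u)$ on $D$, with at most $C(d)$ nonzero summands at each point. Since $p>1$, convexity of $t\mapsto t^p$ yields $|\partial^\alpha u(x)|^p\le C(d)^{p-1}\sum_{j,l}|\partial^\alpha(\varphi_{j,l}u)(x)|^p$; multiplying by $\rho(x)^{p(|\alpha|-a)}$, integrating over $D$ and summing over $|\alpha|\le m$ gives the estimate. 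This direction uses only the quasi-triangle inequality and the bounded overlap, and requires no information on $\rho$.

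I expect the only part requiring genuine care to be the first inequality, and within it the verification of the uniform comparability $\rho\sim 2^{-j}$ on $\supp\varphi_{j,l}$ and of the bounded overlap of the dilated Whitney cubes; both are, however, standard properties already used implicitly elsewhere in the paper (e.g.\ in the discussion following the cone-localization lemma, and in Remark~\ref{remark-rloc}(i)). Once these are in place, the computation is a direct adaptation of the Leibniz-rule estimate in the proof of Lemma~\ref{lemma-Kma}.
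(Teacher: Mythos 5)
Your proof is correct and uses essentially the same approach as the paper's: the bounded overlap of the dilated Whitney cubes, the equivalence $\rho\sim 2^{-j}$ on $\supp\varphi_{j,l}$, and the Leibniz-rule estimate with $|\partial^\gamma\varphi_{j,l}|\lesssim 2^{j|\gamma|}$. The only difference is organizational: the paper first proves an auxiliary $L_p$-localization identity for arbitrary $v\in L_p(D)$ and then commutes $\varphi_{j,l}$ past $\rho^{|\alpha|-a}\partial^\alpha$, whereas you estimate the two inequalities directly, which is a harmless streamlining.
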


\begin{proof}
	{\bf Step 1:} A crucial observation about Whitney decompositions and related resolutions of unity is the fact that
	the supports of the functions $\varphi_{j,l}$ have uniformly bounded overlap, i.e. at every $x\in D$ only a
	uniformly bounded number of values $\varphi_{j,l}(x)$ is non-zero. To show this we argue that every point
	$x\in D$ belongs to a uniformly bounded number of cubes $2Q_{j,k_l}$.

	(i) This clearly is true for any family $\{\gamma Q_{j,k}\}_{k\in\Z^d}$, $\gamma\geq 1$ of expanded versions of
	dyadic cubes (here $\gamma Q_{j,k}$ is the cube concentric with $Q_{j,k}$ and sidelength $\gamma 2^{-j}$) for
	fixed $j$. Here the number of overlapping cubes depends only on the dimension and the expansion factor
	$\gamma$. Moreover, this transfers to any subfamily, particularly to $\{2Q_{j,k_l}\}_{l=1,\ldots,N_j}$.

	(ii) Similarly, any point $x\in D$ can only belong to a uniformly bounded number of layers
	$D_j=\bigcup_{l=1}^{N_j}Q_{j,k_l}$. This in turn follows from the conditions
	$c_1 2^{-j}\leq\text{dist}(2Q_{j,k_l},\partial D)\leq 2^{-j}$ for $j>0$ and
	$\text{dist}(2Q_{0,k_l},\partial D)\geq c>0$, which clearly remain true (though with different constants) for
	$Q_{j,k_l}$. Moreover, the number of overlapping layers only depends on the (ratio of the) constants $c,c_1,c_2$.

	Both observations (i) and (ii) combined yield the desired fact.
	
	\vspace{3mm}
	
	{\bf Step 2:} Now let $v\in L_p(D)$. Moreover, denote by
	$I_{j,l}=\{(j',l'):\supp\varphi_{j',l'}\cap Q_{j,k_l}\neq\emptyset\}$. Then, by similar arguments as in Step 1, we
	see $\# I_{j,l}$ is bounded uniformly in $l$ and $j$. Together with the observation from Step 1, we can argue
	\begin{align*}
		\int_D|v(x)|^p dx
			&=\sum_{j,l}\int_{Q_{j,l}}|v(x)|^p dx
				=\sum_{j,l}\int_{Q_{j,l}}\biggl|\sum_{(j',l')\in I_{j,l}}\varphi_{j',l'}(x)v(x)\biggr|^p dx\\
			&\lesssim\sum_{j,l}\sum_{(j',l')\in I_{j,l}}\int_{Q_{j,l}}|\varphi_{j',l'}(x)v(x)|^p dx
				\leq\sum_{j,l}\sum_{(j',l')\in I_{j,l}}\int_{Q_{j,l}}|v(x)|^p dx\\
			&\lesssim\sum_{j,l}\int_{Q_{j,l}}|v(x)|^p dx
				=\int_D|v(x)|^p dx\,.
	\end{align*}
	Consequently, we have
	\[
		\int_D|v(x)|^p dx
			\sim\sum_{j,l}\sum_{(j',l')\in I_{j,l}}\int_{Q_{j,l}}|\varphi_{j',l'}(x)v(x)|^p dx
			\sim\sum_{j',l'}\int_{D}|\varphi_{j',l'}(x)v(x)|^p dx\,;
	\]
	note that we can extend the sum over $I_{j,l}$ in the middle term to all pairs $(j',l')$ without change.
	
	\vspace{3mm}
	
	{\bf Step 3:} Applying the last identity with $v(x)=\rho(x)^{|\alpha|-a}\partial^\alpha u$ for
	$u\in\calk^m_{a,p}(D)$, it remains to show
	\[
		\sum_{j,l}\sum_{|\alpha|\leq m}\|\varphi_{j,l}\rho(x)^{|\alpha|-a}\partial^\alpha u|L_p(D)\|^p
			\sim\sum_{j,l}\sum_{|\alpha|\leq m}\|\rho(x)^{|\alpha|-a}\partial^\alpha (\varphi_{j,l}u)|L_p(D)\|^p
	\]
	for all $j$ and $l$. But this follows with analogous arguments as in the previous step, additionally using the
	condition $|\partial^\alpha\varphi_{j,l}(x)|\leq c_\alpha 2^{j|\alpha|}$ for the resolution of unity and the
	observation $\rho(x)\sim 2^{-j}$ for $x\in\supp\varphi_{j,l}$. In detail:
	
	Fix a multiindex $\alpha$  and put $J_{j,l}=\{(j',l'):\supp\varphi_{j',l'}\cap\supp\varphi_{j,l}\neq\emptyset\}$.
	Using Leibniz rule we find
	\begin{align*}
		\int_D\bigl|\rho(x)^{|\alpha|-a}\partial^\alpha (\varphi_{j,l}u)\bigr|^p dx
			&\lesssim\sum_{(j',l')\in J_{j,l}}
				\int_{2Q_{j,k_l}}\bigl|\varphi_{j',l'}(x)\rho(x)^{|\alpha|-a}\partial^\alpha (\varphi_{j,l}u)\bigr|^p dx\\
			&\lesssim\sum_{(j',l')\in J_{j,l}}2^{-j(|\alpha|-a)p}
				\int_{2Q_{j,k_l}}\biggl|\varphi_{j',l'}(x)\sum_{\beta\leq\alpha}
				\partial^{\alpha-\beta}\varphi_{j,l}(x)\partial^\beta u(x)\biggr|^p dx\\
			&\lesssim\sum_{(j',l')\in J_{j,l}}\sum_{\beta\leq\alpha}2^{-j(|\alpha|-a)p}2^{j(|\alpha|-|\beta|)p}
				\int_{2Q_{j,k_l}}\biggl|\varphi_{j',l'}(x)\partial^\beta u(x)\biggr|^p dx\\
			&\lesssim\sum_{(j',l')\in J_{j,l}}\sum_{\beta\leq\alpha}
				\int_{2Q_{j,k_l}}\bigl|\rho^{|\beta|-a}\varphi_{j',l'}(x)\partial^\beta u(x)\bigr|^p dx\\
			&\lesssim\sum_{(j',l')\in J_{j,l}}\sum_{|\beta|\leq m}
				\int_D\bigl|\rho^{|\beta|-a}\varphi_{j',l'}(x)\partial^\beta u(x)\bigr|^p dx\,.
	\end{align*}
	Summing over $\alpha$, $j$,  and $l$ gives the first half of the desired equivalence, using once more the controlled
	overlap of the supports of the functions $\varphi_{j,l}$ (the size of the sets $J_{j,l}$ is uniformly bounded, and
	vice versa, every pair $(j',l')$ belongs only to a uniformly bounded number of sets $J_{j,l}$). The other direction
	is easier,
	\begin{align*}
		\sum_{j,l}
			&\sum_{|\alpha|\leq m}\|\varphi_{j,l}\rho(x)^{|\alpha|-a}\partial^\alpha u|L_p(D)\|^p
				\sim\sum_{|\alpha|\leq m}\|\rho(x)^{|\alpha|-a}\partial^\alpha u|L_p(D)\|^p\\
			&\sim\sum_{|\alpha|\leq m}
					\int_D\biggl|\sum_{j,l}\rho^{|\alpha|-a}\partial^\alpha(\varphi_{j,l}u)\biggr|^p dx
				\lesssim\sum_{|\alpha|\leq m}\sum_{j,l}
					\int_D\bigl|\rho^{|\alpha|-a}\partial^\alpha(\varphi_{j,l}u)\bigr|^p dx\,.
	\end{align*}
	In the last step we used Step 1, i.e. the sum is locally finite (for every point $x$ there is only a uniformly
	bounded number of non-vanishing summands).
\end{proof}

\subsection{Invariance under diffeomorphisms}
\label{ssec-diffeo}

So far, all results in this section were valid for domains of the form $D=\R^d\setminus S$, where $S$ was an arbitrary closed Lebesgue null-set. As we have seen in the proof of Corollary \ref{cor-Kmm}, via localization, arguments for polytopes may be reduced to domains $D=\R^d\setminus\R^\ell$. We want to pursue this idea further. In particular, here we will investigate the behavior of the refined localization spaces under diffeomorphisms to extend our results to domains with piecewise smooth boundary which arise as (smooth) deformations of polytopes.

\begin{definition}
	Let $U_1,U_2 \subset \R^d$ be arbitrary domains. A smooth map
	$\varphi=(\varphi_1,\ldots,\varphi_d):U_1 \rightarrow U_2$ is called a bounded diffeomorphism if it is invertible
	with inverse $\varphi^{-1}=(\varphi_1^{-1},\ldots,\varphi_d^{-1}):U_2 \rightarrow U_1$ and if the derivatives
	$D^{\alpha}\varphi_i$ and $D^{\alpha}\varphi_i^{-1}$  are bounded on $U_1$ and $U_2$ for all
	$\alpha \in \N^{d}$ and $i=1,\ldots,d$, respectively.
\end{definition}

\begin{lemma}\label{lemma-diffeo}
	Let $D_1 \subset \R^d$ be an arbitrary domain and let $\varphi:\R^d\rightarrow\R^d$ be a function such that
	$\varphi|_{D_1}$ is a bounded diffeomorphism onto $D_2=\varphi(D_1)$ and $\varphi^{-1}(D_2)=D_1$. Further,
	let $s>\sigma_{p,q}$, $0<p<\infty$,  and $0<q \leq \infty$. Then there exists a positive constant $c>0$ such that
	\[
		\|f(\varphi(\cdot))|F^{s,\text{rloc}}_{p,q}(D_1)\|
			\leq c\cdot \|f|F^{s,\text{rloc}}_{p,q}(D_2)\|\,,
			\qquad f\in F^{s,\text{rloc}}_{p,q}(D_2)\,,
	\]
	with $c$ independent of $f$.
\end{lemma}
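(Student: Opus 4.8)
The plan is to reduce the claim to the defining description of the refined localization norm as a weighted $\ell_p$-sum of $F^s_{p,q}(\R^d)$-norms of localized pieces $\varphi_{j,l}f$, and to exploit the well-known invariance of the \emph{unweighted} spaces $F^s_{p,q}(\R^d)$ under bounded diffeomorphisms (for which we refer to \cite{Tr92,Tr06}). The key point is that a bounded diffeomorphism $\varphi$ distorts the Whitney decomposition of $D_2$ only by a bounded factor: since $|\varphi(x)-\varphi(y)|\sim|x-y|$ and $\dist(\varphi(x),\partial D_2)\sim\dist(x,\partial D_1)$ (these follow from boundedness of $D^\alpha\varphi$ and $D^\alpha\varphi^{-1}$, $|\alpha|\le 1$, together with $\varphi^{-1}(D_2)=D_1$), the images $\varphi^{-1}(Q_{j,k_l})$ of the Whitney cubes of $D_2$ form, up to controlled dilation and finite overlap, a family comparable to the Whitney cubes of $D_1$. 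Hence the two associated resolutions of unity are "equivalent" in the sense that each $\varphi_{j,l}\circ\varphi$ is supported in a bounded number of $2Q_{j',k_{l'}}$ from the $D_1$-decomposition (all with $j'$ comparable to $j$), and vice versa.

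First I would fix Whitney decompositions $\{Q_{j,k_l}^{(i)}\}$ of $D_i$ ($i=1,2$) with resolutions of unity $\{\varphi_{j,l}^{(i)}\}$ as in Definition \ref{def-rloc}. Since the norm is independent of the particular admissible resolution of unity (standard, using finite overlap and Lemma \ref{lemma-pointwise}), it suffices to estimate $\sum_{j,l}\|\varphi_{j,l}^{(1)}\cdot(f\circ\varphi)|F^s_{p,q}(\R^d)\|^p$ against $\|f|F^{s,\text{rloc}}_{p,q}(D_2)\|^p=\sum_{j',l'}\|\varphi_{j',l'}^{(2)}f|F^s_{p,q}(\R^d)\|^p$. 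For each pair $(j,l)$ I would write, using $\sum_{j',l'}\varphi_{j',l'}^{(2)}\equiv 1$ on $D_2$ and the finite-overlap/locality argument,
\[
	\varphi_{j,l}^{(1)}\cdot(f\circ\varphi)
		=\sum_{(j',l')\in I_{j,l}}\varphi_{j,l}^{(1)}\cdot\bigl((\varphi_{j',l'}^{(2)}f)\circ\varphi\bigr),
\]
where $I_{j,l}=\{(j',l'):\supp(\varphi_{j',l'}^{(2)}\circ\varphi)\cap\supp\varphi_{j,l}^{(1)}\neq\emptyset\}$ has cardinality bounded uniformly in $(j,l)$, and conversely each $(j',l')$ lies in only boundedly many $I_{j,l}$. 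Then on each summand I would apply the diffeomorphism theorem for $F^s_{p,q}(\R^d)$: the function $g=\varphi_{j',l'}^{(2)}f$ has support in a single dilated Whitney cube $2Q_{j',k_{l'}}^{(2)}$, of size $\sim 2^{-j'}$ and distance $\sim 2^{-j'}$ from $\partial D_2$, and $\varphi$ restricted to a neighbourhood of that cube is a diffeomorphism with derivative bounds uniform in $j'$ \emph{after rescaling} $x\mapsto 2^{j'}x$ — this is precisely the scale-invariant version of the diffeomorphism estimate, giving $\|g\circ\varphi|F^s_{p,q}(\R^d)\|\le c\|g|F^s_{p,q}(\R^d)\|$ with $c$ independent of $j',l'$. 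Multiplying by the cutoff $\varphi_{j,l}^{(1)}$ costs only a constant via Lemma \ref{lemma-pointwise} (its $\cc^r$-norm being bounded uniformly since $|\partial^\alpha\varphi_{j,l}^{(1)}|\lesssim 2^{j|\alpha|}$ and $r$ can be chosen, after the same rescaling, at unit scale). Summing the resulting $p$-th powers over $(j,l)$, using that $j'\sim j$ on $I_{j,l}$ and the double finite-overlap, collapses the sum to $\sum_{j',l'}\|\varphi_{j',l'}^{(2)}f|F^s_{p,q}(\R^d)\|^p$, which is the claim (with the obvious modification of $\ell_p\to\ell_\infty$ where $q=\infty$ only affecting the inner $F^s_{p,q}$-norm, not the outer $\ell_p$-sum).

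The main obstacle is making the scale-invariance of the $F^s_{p,q}(\R^d)$-diffeomorphism estimate precise and uniform in $j$: a single bounded diffeomorphism of $\R^d$ does \emph{not} in general act boundedly on pieces at all dyadic scales with a uniform constant unless one first renormalizes to unit scale, and one must check that after the dilation $x\mapsto 2^j x$ the relevant derivatives $\partial^\alpha\bigl(2^j\varphi(2^{-j}\cdot)-2^j\varphi(2^{-j}x_0)\bigr)$ on the unit cube stay bounded uniformly in $j$ and in the base point $x_0$ — which they do, precisely because all $D^\alpha\varphi$ are bounded on $D_1$ (the first derivative contributes an $O(1)$ Jacobian, higher derivatives pick up extra negative powers of $2^j$). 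A secondary technical point is the boundary behaviour: one needs $\dist(\varphi(x),\partial D_2)\sim\dist(x,\partial D_1)$ near $\partial D_1$, which again follows from the bi-Lipschitz property of $\varphi$ together with $\varphi^{-1}(D_2)=D_1$, ensuring $\varphi$ maps Whitney cubes of $D_1$ into bounded dilates of Whitney cubes of $D_2$ at comparable generation. Once these uniformities are in hand, the bookkeeping of overlaps is routine and entirely parallel to Step 1 of the proof of Theorem \ref{thm-localize}.
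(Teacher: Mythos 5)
Your route is genuinely different from the paper's, and it is much heavier. The paper does not touch the Whitney decomposition at all: it invokes the equivalent characterization \eqref{eq-refined-weight}, so that the claim splits into two estimates, namely $\|f\circ\varphi|F^s_{p,q}(D_1)\|\lesssim\|f|F^s_{p,q}(D_2)\|$ (which follows from the known diffeomorphism theorem for $F^s_{p,q}(\R^d)$ applied once, via the definition of the spaces on domains by restriction) and a weighted $L_p$ change-of-variables estimate $\|\delta_{D_1}^{-s}\,(f\circ\varphi)|L_p(D_1)\|\lesssim\|\delta_{D_2}^{-s}f|L_p(D_2)\|$, the latter using exactly the comparability $\delta_{D_1}(\cdot)\sim\delta_{D_2}(\varphi(\cdot))$ that you also derive from the bi-Lipschitz property and $\varphi^{-1}(D_2)=D_1$. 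If you use Remark \ref{remark-rloc}(iv) in this way, all of your bookkeeping with two Whitney families, comparable generations and double finite overlap becomes unnecessary.

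Beyond being longer, your argument has a genuine gap at its load-bearing step: the claim that $\|(\varphi^{(2)}_{j',l'}f)\circ\varphi|F^s_{p,q}(\R^d)\|\le c\,\|\varphi^{(2)}_{j',l'}f|F^s_{p,q}(\R^d)\|$ with $c$ independent of $(j',l')$. The diffeomorphism theorems you appeal to (Triebel, Scharf) are for bounded diffeomorphisms of all of $\R^d$, whereas here $\varphi$ is only assumed to be smooth and diffeomorphic on $D_1$, with no regularity hypothesis whatsoever outside $D_1$; so the global theorem cannot be applied to $\varphi$ itself, and your rescaling observation (bounded derivatives of $2^{j}\varphi(2^{-j}\cdot)$ on a unit cube) does not yet yield the estimate. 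To conclude you would still need either a genuinely local version of the composition estimate (e.g. via atomic decompositions or local means), or an extension of the rescaled local map to a global diffeomorphism of $\R^d$ with uniformly controlled derivatives; the latter is immediate only at fine scales $j\gg 1$, where the second derivatives of the rescaled map are $O(2^{-j})$ and the map is a small perturbation of an affine bijection, while at the (finitely many) coarse scales the derivative can oscillate by $O(1)$ over a unit cube and a further subdivision argument is needed. None of this is supplied, and it is precisely the uniformity you yourself flag as the main obstacle. A secondary but real issue of the same kind occurs where you multiply by the cutoff $\varphi^{(1)}_{j,l}$ ``at cost of a constant'': its $\cc^r$-norm grows like $2^{jr}$, so the multiplier estimate must also be performed after rescaling, and the homogeneity factors of $F^s_{p,q}(\R^d)$ (cf. \cite{SchnVyb}) on both sides have to be tracked and cancelled — plausible, but only asserted in your sketch.
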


Note that it does not suffice to assume $\varphi:D_1\rightarrow D_2$ to be a (bounded) diffeomorphism: In the definition of the spaces $F^{s,\text{rloc}}_{p,q}(D)$ all functions are required to be (restrictions of) tempered distributions on $\R^d$, i.e.,  every  function on $D$ is required to have some extension to $\R^d$. For that reason $\varphi$ needs to be defined on $\R^d$, but only its restriction to $D_1$ needs to be bijective and smooth.

\begin{proof}
	To prove the lemma, we use the alternative characterization \eqref{eq-refined-weight}. To distinguish between the
	two domains $D_1$ and $D_2$, we now denote $\delta_D(x):=\min\left(\dist(x,\partial D),1\right)$ for a domain
	$D \subset \R^d$. Therefore, it suffices to prove that there exists a $c>0$ such that
	\begin{equation}\label{eq-diff1}
		\|f(\varphi(\cdot))|F^{s}_{p,q}(D_1)\|
			\leq c\,\|f|F^{s}_{p,q}(D_2)\|\,
	\end{equation}
	and
	\begin{equation}\label{eq-diff2}
		\|\delta_{D_1}(\cdot)^{-s}f(\varphi(\cdot))|L_p(D_1)\|
			\leq c\,\|\delta_{D_2}(\cdot)^{-s}f(\cdot)|L_p(D_2)\|
	\end{equation}
	with $c$ independent of $f$.
	
	From the corresponding diffeomorphism results for $F^{s}_{p,q}(\R^d)$, see \cite{Tr92} or
	\cite[Theorem 4.16]{Scharf3}, one can directly derive \eqref{eq-diff1} using the definition by restriction of
	$F^{s}_{p,q}(D_1)$ and $F^{s}_{p,q}(D_2)$, respectively.
	
	Hence, it is left to prove \eqref{eq-diff2}. From the fact that $\varphi(x) \in \partial D_2$ if,  and only if, 
	$x \in \partial D_1$ as well as from the Lipschitz continuity of $\varphi$ and $\varphi^{-1}$ it follows that
	\[
		\delta_{D_1}(\cdot) \sim  \delta_{D_2}(\varphi(\cdot)),
	\]
	i.e.,   there exist constants $a,b$ with $0<a<b$ such that
	$
		a \cdot \delta_{D_1}(x) \leq \delta_{D_2}(\varphi(x)) \leq b \cdot \delta_{D_1}(x) 
	$ 
	for all $x \in D_1$. But now, we can use the known results for diffeomorphisms for $L_p$-spaces, see e.g.\
	\cite[Lemma 4.13]{Scharf3}, to get
	\begin{align*}
		\|\delta_{D_1}(\cdot)^{-s}f(\varphi(\cdot))|L_p(D_1)\|
			&\leq C \cdot \|\delta_{D_2}(\varphi(\cdot))^{-s}f(\varphi(\cdot))|L_p(D_1)\| \\
			&\leq C' \cdot \|\delta_{D_2}(\cdot)^{-s}f(\cdot)|L_p(D_2)\|
	\end{align*}
	with $C'>0$ independent of $f$. Hence we have shown \eqref{eq-diff2} and the proof is finished.
\end{proof}

\begin{remark}
	As in \cite{Scharf3}we can relax the assumption that the diffeomorphism $\varphi$ admits bounded partial
	derivatives $D^\alpha\varphi$ for all $\alpha\in\N_0^d$. In fact, it suffices to assume $\varphi\in C^k(\R^d)$, with
	derivatives $D^\alpha\varphi$ bounded on $D_1$ for $|\alpha|\leq k$ for some $k>s$.
\end{remark}

\section{An embedding result}
\label{sec-embedding}

As demonstrated in \cite{Hansen1,Hansen2}, in connection with convergence rates for adaptive approximation schemes, embeddings of Kondratiev spaces into Besov or Triebel-Lizorkin spaces are of special interest. In \cite{Hansen2} necessary and sufficient conditions for embeddings
\[
	\calk^m_{a,p}(D)\cap B^s_{p,\infty}(D)\hookrightarrow F^m_{\tau,\infty}(D)
\]
were given. In this section we return to this subject once more, for two reasons: On the one hand we will demonstrate that the intersection on the left-hand side can be avoided, i.e. we will show embeddings for the Kondratiev spaces themselves, and on the other hand, we will show that Kondratiev spaces actually embed into the smaller spaces $F^{m,\text{rloc}}_{\tau,2}(D)$. We will prove these results using two different approaches: The first one is based on Theorem \ref{thm-Kmm}, Lemma \ref{lemma-Kma} and a variant of H\"older's inequality for Triebel-Lizorkin spaces, and the second one relies on the localization from Theorem \ref{thm-localize}.

\subsection{Approach via H\"older inequalities}

Given the results of Section \ref{ssec-Kmm}, in principle we can reduce embeddings between Kondratiev spaces $\calk^m_{a,p}(D)$ and the refined localization spaces $F^{m,\text{rloc}}_{\tau,q}(D)$ to estimates for
$\|\rho^{a-m}u|F^{m,\text{rloc}}_{\tau,q}(D)\|$ in terms of $\|u|F^{m,\text{rloc}}_{p,2}(D)\|$,
\begin{align*}
	\calk^m_{a,p}(D)\hookrightarrow F^{m,\text{rloc}}_{\tau,q}(D)
		&\iff\|u|F^{m,\text{rloc}}_{\tau,q}(D)\|\lesssim\|u|\calk^m_{a,p}(D)\|
			\sim\|\rho^{m-a}u|\calk^m_{m,p}(D)\|\\
		&\iff\|\rho^{a-m}u|F^{m,\text{rloc}}_{\tau,q}(D)\|\lesssim\|u|F^{m,\text{rloc}}_{p,2}(D)\|\,.
\end{align*}
In other words, we shall try to prove
\[
	\calk^m_{a,p}(D)=\rho^{a-m}\calk^m_{m,p}(D)
		=\rho^{a-m}F^{m,\text{rloc}}_{p,2}(D)
		\hookrightarrow F^{m,\text{rloc}}_{\tau,2}(D)\,.
\]
In the special case $p>\tau>1$ the last space again is $\calk^m_{m,\tau}(D)$. Hence,  in this special case our desired embedding result can be reformulated as
\[
	\calk^m_{a,p}(D)\hookrightarrow\calk^m_{m,\tau}(D)\,,\quad 1<\tau<p<\infty\,,
\]
which of course in this version can be proved more directly using the classical H\"older inequality. We are thus looking for an extension of that fact to $\tau\leq 1$.

In view of the identity \eqref{eq-refined-weight}, necessary conditions can be derived in an even easier way: By testing the inequality
\[
	\|\rho^{-m}u|L_\tau(D)\|\lesssim\|u|\calk^m_{a,p}(D)\|
\]
against ``typical'' representatives for these function spaces. In other words: If we can find a function $u\in\calk^m_{a,p}(D)$ such that $\|\rho^{-m}u|L_\tau(D)\|$ is infinite, the embedding is necessarily false.

\begin{prop}\label{prop-necessary}
Let  $m\in \N$,  $ 1<p<\infty$,     $0< \tau<p$,  and  $\delta$ denote  the dimension of the singular set of the domain $D$. 
	Moreover,     let either
	\[
		m-a=(d-\delta)\Bigl(\frac{1}{\tau}-\frac{1}{p}\Bigr)\quad\text{and}\quad a<m
	\]
	or
	\[
		m-a>(d-\delta)\Bigl(\frac{1}{\tau}-\frac{1}{p}\Bigr). 
	\]
	Then there exists a function $u\in\calk^m_{a,p}(D)$ such that $\|\rho^{-m}u|L_\tau(D)\|=\infty$. Consequently,
	\[
		\calk^m_{a,p}(D)\not\hookrightarrow F^{m,\text{rloc}}_{\tau,2}(D). 
	\]
	
\end{prop}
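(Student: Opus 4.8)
By the norm equivalence \eqref{eq-refined-weight} — which by Remark \ref{remark-rloc}(iv) holds also for $q=\infty$ and in which $\delta(\cdot)$ may be replaced by the regularized distance $\rho$ — every $u\in F^{m,\text{rloc}}_{\tau,2}(D)$ satisfies $\|\rho^{-m}u|L_\tau(D)\|\lesssim\|u|F^{m,\text{rloc}}_{\tau,2}(D)\|<\infty$. Hence it is enough to produce a locally integrable function $u\in\calk^m_{a,p}(D)$ with $\|\rho^{-m}u|L_\tau(D)\|=\infty$; such a $u$ cannot lie in $F^{m,\text{rloc}}_{\tau,2}(D)$, so the non-embedding follows. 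To construct $u$ I work near one stratum of the singular set of \emph{maximal} dimension $\delta$: by a translation and rotation (under which both the Kondratiev and the $L_\tau$ quasi-norm are invariant) I may assume there are a bounded open set $V$ and coordinates $x=(x',x'')\in\R^\delta\times\R^{d-\delta}$ on $V$ with $\rho(x)\sim|x''|$ and the relevant part of the singular set equal to $\{x''=0\}$ (for a polyhedral edge $\delta=1$; near a vertex one restricts to a narrow cone on which $\rho(x)\sim|x|$, i.e.\ $\delta=0$; for the model domain $\R^d\setminus\R^\delta$ no reduction is needed). Fix a cut-off $\chi\in C_c^\infty(V)$ with $\chi\equiv1$ on a smaller set $V_0$ meeting $\{x''=0\}$ in a set of positive $\delta$-dimensional measure.

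\textbf{The test function.} Take $\eta\in C^\infty(\R)$ with $\supp\eta\subset[\tfrac14,2]$, $\eta\equiv1$ on $[\tfrac12,1]$, and set $\Phi_j(x):=\eta(2^j|x''|)\,\chi(x)$. Since $\eta$ vanishes near $0$, $\Phi_j\in C^\infty$; its support lies in $\{2^{-j-2}\leq|x''|\leq 2^{-j+1}\}\cap\supp\chi$, so the supports have uniformly bounded overlap in $j$; on $\supp\Phi_j$ one has $\rho\sim2^{-j}$, and by a chain-rule estimate together with $|\partial^\beta_{x''}|x''||\lesssim|x''|^{1-|\beta|}\sim2^{j(|\beta|-1)}$ there, $|\partial^\alpha\Phi_j|\lesssim2^{j|\alpha|}$ uniformly in $j$; finally the shell $\{2^{-j-2}\leq|x''|\leq 2^{-j+1}\}\cap\supp\chi$ has Lebesgue measure $\sim2^{-j(d-\delta)}$, and $\{2^{-j-1}\leq|x''|\leq 2^{-j}\}\cap V_0$ (where $\Phi_j\equiv1$) still has measure $\sim2^{-j(d-\delta)}$ with these sets pairwise disjoint in $j$. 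For $c_j\geq0$ to be chosen set $u:=\sum_{j\geq j_0}c_j\Phi_j$, a locally finite sum, so $u\in C^\infty(D)$. Using $|\rho(x)^{|\alpha|-a}\partial^\alpha u(x)|\lesssim c_j2^{ja}$ on the $j$-th shell and $\rho^{-m}u\gtrsim c_j2^{jm}$ on $\{2^{-j-1}\leq|x''|\leq 2^{-j}\}\cap V_0$, the bounded overlap and disjointness give
\[
	\|u|\calk^m_{a,p}(D)\|^p\lesssim\sum_{j\geq j_0}c_j^p\,2^{j(ap-(d-\delta))}\,,\qquad
	\|\rho^{-m}u|L_\tau(D)\|^\tau\gtrsim\sum_{j\geq j_0}c_j^\tau\,2^{j(m\tau-(d-\delta))}\,.
\]

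\textbf{Choice of $c_j$.} If $m-a>(d-\delta)(\tfrac1\tau-\tfrac1p)$, put $c_j:=2^{-j\gamma}$ with $\gamma:=m-\tfrac{d-\delta}{\tau}$. Then the second sum is $\sum_j2^{j(\tau(m-\gamma)-(d-\delta))}=\sum_j1=\infty$, while the exponent in the first sum is $p(a-\gamma)-(d-\delta)<0$ exactly because the hypothesis says $\gamma>a-\tfrac{d-\delta}{p}$; hence the first sum converges and $u\in\calk^m_{a,p}(D)$. If instead $m-a=(d-\delta)(\tfrac1\tau-\tfrac1p)$ — so that $\mu_0:=m-\tfrac{d-\delta}{\tau}=a-\tfrac{d-\delta}{p}$ and the previous power sits exactly on the boundary of $\calk^m_{a,p}(D)$ — insert a logarithmic gain, $c_j:=2^{-j\mu_0}j^{-1/\tau}$. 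Then the first sum equals $\sum_jj^{-p/\tau}<\infty$ since $\tau<p$, so $u\in\calk^m_{a,p}(D)$, whereas the second equals $\sum_jj^{-1}=\infty$. (Here $a<m$ is automatic, as $\tau<p$ and $\delta<d$.) This is the point where the hypothesis $\tau<p$ enters essentially.

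\textbf{Where the work lies.} The substance is the shell construction in the borderline case. The step I expect to need the most care is the geometric estimate that a dyadic shell of width $2^{-j}$ around a $\delta$-dimensional stratum has volume $\sim2^{-j(d-\delta)}$ — this is the mechanism by which the dimension $\delta$ enters the threshold, and it is also why one must pick the stratum of maximal dimension for the sharpest obstruction — together with the uniform bounds $|\partial^\alpha\Phi_j|\lesssim2^{j|\alpha|}$ (from the chain rule and the scaling of $\Phi_j$) and the controlled overlap that turn $\|u|\calk^m_{a,p}(D)\|$ into the displayed term-by-term sum. The local normal-form reduction (near an edge, where $\rho\sim|x''|$, and near a vertex, where $\rho\sim|x|$) is routine but should be written out, as should the remark that $u$ is an honest locally integrable function on $D$, which makes $\calk^m_{a,p}(D)\not\hookrightarrow F^{m,\text{rloc}}_{\tau,2}(D)$ meaningful.
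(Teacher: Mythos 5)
Your proposal is correct and follows essentially the same route as the paper: the non-embedding is reduced via the norm equivalence \eqref{eq-refined-weight} to exhibiting $u\in\calk^m_{a,p}(D)$ with $\|\rho^{-m}u|L_\tau(D)\|=\infty$, the problem is localized near the maximal-dimensional stratum where $\rho(x)\sim|x''|$, and the test function is the borderline power $\rho^{m-(d-\delta)/\tau}$, corrected by a logarithmic factor in the equality case. The only difference is cosmetic: you realize this profile through a dyadic shell decomposition $\sum_j c_j\Phi_j$ and geometric series, whereas the paper writes the function explicitly and integrates in polar coordinates.
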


\begin{remark}\label{rem-embedding-technical}
Note that the spaces $F^{m,\text{rloc}}_{\tau,2}(D)$ are only defined for 
	  $m>\sigma_{\tau,2}=d \big( \frac{1}{\min(1,\tau)}-1\big)$.  We omitted this additional restriction in Proposition \ref{prop-necessary} since   for $m\leq \sigma_{\tau,2}$ the embedding then clearly does not exist.  We also refer to Remark \ref{rem-embedding-technical} in this context. 
\end{remark}

\begin{proof}
	{\bf Step 1:} We start with the condition $m-a=d(\frac{1}{\tau}-\frac{1}{p})$, and consider first the case $\delta=0$ and the
	domain $D=\R^d\setminus\{0\}$. We put
	\[
		u_\lambda=\rho^{m-d/\tau}(1+|\log\rho|)^\lambda
			=\rho^{a-d/p}(1+|\log\rho|)^\lambda\,.
	\]
	When calculating the $L_p$ and $L_\tau$-norms we shall only integrate over a (small) ball around $0$, which is
	clearly equivalent to multiplying $u_\lambda$ with a cut-off function. We then find
	\[
		\|\rho^{-m}u_\lambda|L_\tau(D)\|^\tau
			\sim\int_0^R\Bigl(\rho^{-m}\rho^{m-d/\tau}(1+|\log\rho|)^\lambda\Bigr)^{\tau}\rho^{d-1}d\rho
			=\int_0^R \rho^{-1}(1+|\log\rho|)^{\lambda\tau}d\rho\,,
	\]
	which is finite if, and only if,  $\lambda\tau<-1$.
	
	To calculate the norm in $\calk^m_{a,p}(D)$, we first note
	\[
		|\partial^\alpha u_\lambda|
			\lesssim\rho^{a-d/p-|\alpha|}\sum_{j=0}^{|\alpha|}(1+|\log\rho|)^{\lambda-j}\,.
	\]
	Then we find
	\[
		\|\rho^{|\alpha|-a}\partial^\alpha u_\lambda|L_p(D)\|^p
			\lesssim\sum_{j=0}^{|\alpha|}\int_0^R \rho^{-1}(1+|\log\rho|)^{(\lambda-j)p}d\rho\,,
	\]
	which is finite if, and only if,  $\lambda p<-1$.
	
	Thus whenever $\tau<p$ we can find a parameter $\lambda$ such that $-\frac{1}{\tau}<\lambda<-\frac{1}{p}$,
	which yields a function with the prescribed properties.
	
	\vspace{3mm}
	
	{\bf Step 2:} For the case $\delta>0$ clearly $D=\R^d\setminus\R^\delta$ is prototypical. However, this can
	immediately be traced back to the case $\R^{d-\delta}\setminus\{0\}$, by simply identifying the corresponding
	example in $d-\delta$ variables with a function in $d$ variables which is constant w.r.t. the last $\delta$ variables.
	
	For general polytopes we simply restrict ourselves to sufficiently small neighborhoods of parts of the singular set
	(single vertices, edges etc.). Then, after rotation and translation, the neighborhood coincides with the situation for
	$\R^d\setminus\R^\ell$.
	
	\vspace{3mm}
	
	{\bf Step 3:} The case $m-a	>(d-\delta)(\frac{1}{\tau}-\frac{1}{p})$ follows by basically the same arguments,
	considering the function $u=\rho^{m-d/\tau}$.
\end{proof}

\begin{remark}
	The resulting necessary conditions (and the test functions in the proof) are exactly the same ones as previously
	obtained in \cite{Hansen2} w.r.t. the space $F^m_{\tau,q}(D)$, but (given \eqref{eq-refined-weight}) found by
	more elementary means.
\end{remark}

There is another necessary condition which is well-known for Lebesgue spaces on finite measure spaces.

\begin{lemma}\label{lemma-necessary}
	A continuous embedding $\calk^m_{a,p}(D)\hookrightarrow F^{m,\text{rloc}}_{\tau,q}(D)$ implies $\tau\leq p$.
\end{lemma}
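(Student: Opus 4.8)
The plan is to break the assumed embedding with a single family of test functions concentrated at scale $2^{-k}$ in the interior of $D$. The guiding observation is that such a bump has Kondratiev norm of order $2^{k(m-d/p)}$ but refined‑localization norm of order $2^{k(m-d/\tau)}$ (the latter being, up to constants, already its full Triebel--Lizorkin norm), so that for $\tau>p$ the right‑hand side grows strictly faster than the left‑hand side. Concretely, one may assume $p<\infty$ (otherwise $\tau\le p=\infty$ is trivial), fix $x_0\in D$ and $k_0\ge 1$ so that the closed ball $\overline{B(x_0,2^{-k_0})}$ lies in $D$ and $\rho(x)\sim\rho(x_0)=:\rho_0>0$ on it, pick $0\ne\phi\in C_c^\infty(\R^d)$ with $\supp\phi\subset B(0,1)$, and set $a_k(x):=\phi(2^k(x-x_0))$ for $k\ge k_0$, so that $\supp a_k\subset B(x_0,2^{-k})$ lies in $D$ throughout.

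The first ingredient is the upper estimate $\|a_k|\calk^m_{a,p}(D)\|\lesssim 2^{k(m-d/p)}$. On $\supp a_k$ the weights $\rho^{|\alpha|-a}$ appearing in \eqref{K-norm-domains} are comparable to the constant $\rho_0^{|\alpha|-a}$, while $|\partial^\alpha a_k|\le 2^{k|\alpha|}\|\partial^\alpha\phi|L_\infty\|$ on a set of measure $\sim 2^{-kd}$; summing over $|\alpha|\le m$ and noting that the term $|\alpha|=m$ dominates as $k\to\infty$ (here $m\ge 1$) gives the claim. The second ingredient is the lower estimate $\|a_k|F^{m,\text{rloc}}_{\tau,q}(D)\|\gtrsim 2^{k(m-d/\tau)}$. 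By the norm equivalence \eqref{eq-refined-weight} it suffices to bound $\|a_k|F^m_{\tau,q}(D)\|$ from below. Choosing a fixed $\chi\in C_c^\infty(D)$ with $\chi\equiv 1$ on a neighbourhood of all the sets $\supp a_k$, one has $\chi g=a_k$ on $\R^d$ for every $g\in F^m_{\tau,q}(\R^d)$ with $g|_D=a_k$; hence, by the classical pointwise multiplier theorem for $F^m_{\tau,q}(\R^d)$ (as used in the proof of Lemma \ref{lemma-pointwise}), $\|a_k|F^m_{\tau,q}(\R^d)\|=\|\chi g|F^m_{\tau,q}(\R^d)\|\lesssim\|g|F^m_{\tau,q}(\R^d)\|$, and taking the infimum over such $g$ gives $\|a_k|F^m_{\tau,q}(D)\|\gtrsim\|a_k|F^m_{\tau,q}(\R^d)\|$. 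Finally one invokes the standard ``elementary building block'' estimate $\|\phi(2^k\cdot)|A^s_{u,v}(\R^d)\|\sim 2^{k(s-d/u)}$ as $k\to\infty$ (see e.g. \cite{Tr92}), applied with $s=m$, $u=\tau$.

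Granting these two estimates, the conclusion is immediate: if $\calk^m_{a,p}(D)\hookrightarrow F^{m,\text{rloc}}_{\tau,q}(D)$ with constant $C$, then for suitable $c,C'>0$ one has $c\,2^{k(m-d/\tau)}\le\|a_k|F^{m,\text{rloc}}_{\tau,q}(D)\|\le C\,\|a_k|\calk^m_{a,p}(D)\|\le CC'\,2^{k(m-d/p)}$ for all $k\ge k_0$, i.e. $2^{k(d/p-d/\tau)}\le CC'/c$ uniformly in $k$. Letting $k\to\infty$ this forces $d/p\le d/\tau$, that is $\tau\le p$.

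The only step demanding real care is the lower bound $\|a_k|F^{m,\text{rloc}}_{\tau,q}(D)\|\gtrsim 2^{k(m-d/\tau)}$, and inside it the lower half of the building‑block estimate. A short self‑contained argument runs as follows: the $k$‑th Littlewood--Paley block of $a_k$ satisfies $\mathcal F^{-1}(\varphi_k\mathcal F a_k)(x)=G(2^k(x-x_0))$ with the fixed Schwartz profile $G=\mathcal F^{-1}[\varphi_1(2\,\cdot)\,\widehat\phi]$; since $\widehat\phi$ is entire by Paley--Wiener it cannot vanish on the annulus $\supp\varphi_1(2\,\cdot)$, so $G\ne 0$ and $\|\mathcal F^{-1}(\varphi_k\mathcal F a_k)|L_\tau\|=\|G|L_\tau\|\,2^{-kd/\tau}$; using $F^m_{\tau,q}(\R^d)\hookrightarrow B^m_{\tau,\infty}(\R^d)$ one then gets $\|a_k|F^m_{\tau,q}(\R^d)\|\gtrsim 2^{km}\|\mathcal F^{-1}(\varphi_k\mathcal F a_k)|L_\tau\|\gtrsim 2^{k(m-d/\tau)}$. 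Everything else --- the Kondratiev upper bound, the transfer from $D$ to $\R^d$ via a cut‑off, and the equivalence \eqref{eq-refined-weight} --- is routine.
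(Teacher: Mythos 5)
Your proof is correct, and it shares the paper's basic strategy --- test the embedding with functions supported in the interior of $D$, at positive distance from the singular set, where $\rho\sim\rho(x_0)$ and hence the Kondratiev norm reduces to an unweighted $W^m_p$-norm --- but the way you establish the local necessity of $\tau\leq p$ is genuinely different. The paper simply invokes that this restriction is well known when comparing $W^m_p(D_0)$ with $F^m_{\tau,q}(D)$, and indicates that it can be made explicit with the radial powers $|x-x_0|^\alpha$ via Proposition \ref{prop-radial}: for $\tau>p$ one picks $\alpha$ with $m-\frac{d}{p}<\alpha\leq m-\frac{d}{\tau}$, obtaining a single function lying in $W^m_p$ locally but outside $F^m_{\tau,q}$. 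You instead use the dilation family $a_k=\phi(2^k(\cdot-x_0))$ and compare the two sides asymptotically as $k\to\infty$, proving the lower bound $\|a_k|F^m_{\tau,q}(\R^d)\|\gtrsim 2^{k(m-d/\tau)}$ by an explicit Littlewood--Paley block computation; your Paley--Wiener argument for $G\neq 0$ is sound, and the single-block lower bound could even be read off pointwise from the $F$-quasi-norm without the detour through $B^m_{\tau,\infty}(\R^d)$. Your route is longer but self-contained: it avoids citing Proposition \ref{prop-radial} (Runst--Sickel), and it makes explicit two steps the paper leaves implicit, namely the passage from the rloc-norm down to the norm on $\R^d$ via \eqref{eq-refined-weight} combined with a fixed cut-off $\chi\in C_c^\infty(D)$ and the multiplier estimate underlying Lemma \ref{lemma-pointwise}, as well as the quantitative scaling of the Kondratiev norm of the bumps. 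What the paper's argument buys is brevity and a single test function per interior point, in the same style as the other necessity arguments (Proposition \ref{prop-necessary}); note only that both arguments tacitly use the standing assumption $m>\sigma_{\tau,q}$, without which $F^{m,\text{rloc}}_{\tau,q}(D)$ is not defined, and that your reduction to $p<\infty$ is harmless since the paper's Kondratiev scale has $1<p\leq\infty$.
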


\begin{proof}
	This is based on the observation that on any bounded subdomain $D_0\subset D$ with positive distance to the
	singular set the weight function $\rho$ is bounded from below, hence for any function with compact support in
	$D_0$ its norm in $\calk^m_{a,p}(D)$ is equivalent to its unweighted counterpart in $W^m_p(D_0)$. However,
	when comparing $W^m_p(D_0)$ and $F^m_{\tau,q}(D)$ the restriction $\tau\leq p$ is well-known. This can be
	made explicit by considering functions $|x-x_0|^\alpha$ with appropriate exponents $\alpha$ for arbitrary interior
	points $x_0\in D_0$. See also Proposition \ref{prop-radial} below.
\end{proof}

For the embedding itself, as announced above,  we shall estimate $\|\rho^{a-m}u|F^{m,\text{rloc}}_{\tau,q}(D)\|$ in terms of $\|u|F^{m,\text{rloc}}_{p,2}(D)\|$. Here and in what follows, we will mainly concentrate on the special case $D=\R^d\setminus\R^\ell$. This yields two important simplifications. Firstly, all function spaces on domains $D$ can be identified with the corresponding spaces on $\R^d$ (also for Triebel-Lizorkin spaces the restriction $s>\sigma_{p,q}$ guarantees that the spaces indeed consist of functions). Secondly, the distance function $\delta(x)=\dist(x,\partial D)$ itself is already smooth on $D$, i.e. we simply have $\rho=\delta$.

In view of \eqref{eq-refined-weight} it is therefore sufficient to show the estimates
\[
	\|\rho^{a-2m}u|L_\tau(\R^d)\|\lesssim\|\rho^{-m}u|L_p(\R^d)\|
		\qquad\text{and}\qquad
	\|\rho^{a-m}u|F^m_{\tau,q}(\R^d)\|\lesssim\|u|F^m_{p,2}(\R^d)\|\,.
\]
For the first part, applying H\"older's inequality is quite natural, and also for the second one we shall use a variant of H\"older's inequality for Triebel-Lizorkin spaces shown in \cite[Theorem 4.2.1]{SickelTriebel}. 

\begin{prop}\label{prop-Holder-F}
	Let $s>0$, $0<p_1,p_2,p<\infty$,  and $0<q_1,q_2,q\leq\infty$.  Moreover, assume 
	\[
		\frac{1}{r_1}=\frac{1}{p_1}-\frac{s}{d}>0\,,\quad
		\frac{1}{r_2}=\frac{1}{p_2}-\frac{s}{d}>0\,,\quad
		\frac{1}{r_1}+\frac{1}{r_2}=\frac{1}{r}=\frac{1}{p}-\frac{s}{d}<1\,.
	\]
	Then it  holds
	\begin{equation}\label{eq-Holder-F}
		F^s_{p_1,q_1}F^s_{p_2,q_2}\subset F^s_{p,q}
	\end{equation}
	if, and only if,  $\max(q_1,q_2)\leq q$. Therein \eqref{eq-Holder-F} is a short-hand notation for the pointwise
	multiplication inequality
	\[
		\|f\cdot g|F^s_{p,q}(\R^d)\|
			\lesssim\|f|F^s_{p_1,q_1}(\R^d)\|\cdot\|g|F^s_{p_2,q_2}(\R^d)\|
	\]
	for all $f\in F^s_{p_1,q_1}(\R^d)$ and $g\in F^s_{p_2,q_2}(\R^d)$.
\end{prop}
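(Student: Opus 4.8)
This is Theorem~4.2.1 of \cite{SickelTriebel}, and the plan is to follow that circle of ideas. Work on $\R^d$ throughout. The first step is to eliminate the fine indices from the \emph{sufficiency} direction: if $\bar q:=\max(q_1,q_2)\le q$, then the elementary monotonicity embeddings $F^s_{p_i,q_i}(\R^d)\hookrightarrow F^s_{p_i,\bar q}(\R^d)$ ($i=1,2$) and $F^s_{p,\bar q}(\R^d)\hookrightarrow F^s_{p,q}(\R^d)$ reduce \eqref{eq-Holder-F} to the diagonal case $q_1=q_2=q$, so it will remain to prove, for all admissible $p_1,p_2,p$ and all $0<q\le\infty$, that $F^s_{p_1,q}\cdot F^s_{p_2,q}\hookrightarrow F^s_{p,q}$.

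For this diagonal case I would use the characterisation of $F^s_{p,q}(\R^d)$, $s>0$, by differences: for an integer $M>s$,
\[
	\|f|F^s_{p,q}(\R^d)\|\sim\|f|L_p(\R^d)\|+\left\|\left(\int_{|h|\le1}|h|^{-sq}\bigl|\Delta_h^Mf(\cdot)\bigr|^q\,\frac{dh}{|h|^d}\right)^{1/q}\,\Big|\,L_p(\R^d)\right\|
\]
(usual modification for $q=\infty$; cf. \cite{Tr92}), where in the quasi-Banach range one passes to the smoothed ball means $d^M_tf=\bigl(t^{-d}\int_{|h|\le t}|\Delta_h^Mf|^v\,dh\bigr)^{1/v}$, $0<v<\min(p,q)$, so that shifted copies of a factor are absorbed into the Hardy--Littlewood maximal operator $\mathcal M$. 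Inserting the Leibniz rule for finite differences $\Delta_h^M(fg)=\sum_{k=0}^M\binom Mk(\Delta_h^kf)(\cdot+(M-k)h)\,(\Delta_h^{M-k}g)$ and the $\ell_q$-triangle inequality in $h$ splits the problem into $M+1$ terms. In the two \emph{pure} terms $k\in\{0,M\}$ all $M$ differences sit on one factor; for $k=0$ one bounds the averaged translates of $f$ by $\bigl(\mathcal M(|f|^v)\bigr)^{1/v}$, applies H\"older in $x$ with $\tfrac1p=\tfrac1{r_1}+\tfrac1{p_2}$ (a consequence of the reciprocal relations) and uses the Sobolev embedding $F^s_{p_1,q}(\R^d)\hookrightarrow L_{r_1}(\R^d)$ together with boundedness of $\mathcal M$ on $L_{r_1/v}$ --- both legitimate because $1<r_i<\infty$ (indeed $\tfrac1{r_i}<\tfrac1r<1$) and $v<p_1<r_1$ --- leaving exactly a differences norm of $g$, i.e.\ $\|g|F^s_{p_2,q}\|$; the term $k=M$ is symmetric.

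The \emph{mixed} terms $0<k<M$ are the heart of the matter. Here one splits the smoothness weight $|h|^{-s}=|h|^{-s_1}|h|^{-s_2}$ with $0<s_1<k$, $0<s_2<M-k$, $s_1+s_2=s$ --- possible precisely because $M>s$ --- and applies H\"older both in the $h$-integral and in $x$, tuning the auxiliary integrability and fine exponents so that the $f$-factor becomes an $F^{s_1}_{\sigma_1,*}$-type quantity and the $g$-factor an $F^{s_2}_{\sigma_2,*}$-type quantity, whereupon Sobolev-type embeddings $F^s_{p_i,q}\hookrightarrow F^{s_i}_{\sigma_i,*}$ (their hypotheses encoded exactly in $\tfrac1{r_i}=\tfrac1{p_i}-\tfrac sd$) close the estimate against $\|f|F^s_{p_1,q}\|\,\|g|F^s_{p_2,q}\|$. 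Keeping all these auxiliary exponents mutually consistent, and verifying that every maximal inequality and embedding invoked is available in the stated (quasi-Banach) range, is the main obstacle. A technically cleaner, if more set-up-heavy, alternative is to expand $f$ and $g$ in the $\varphi$-transform / atomic decomposition of Frazier--Jawerth, write $fg=\sum_{Q,P}\lambda_Q\mu_P\,a_Qb_P$, split the double sum according to whether the scales of $Q$ and $P$ are comparable, and estimate the resulting coefficient sequences with the Fefferman--Stein vector-valued maximal inequality --- the assumption $\tfrac1r<1$ being exactly what makes that inequality applicable, and $\tfrac1{r_i}>0$ providing the geometric off-diagonal decay.

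For the \emph{necessity} of $\max(q_1,q_2)\le q$ it suffices, by symmetry, to rule out $q_1>q$. I would test \eqref{eq-Holder-F} against a fixed cutoff $g\in C_c^\infty(\R^d)$ with $g\equiv1$ on a ball $B$ and against $f_N=\sum_{j=0}^N2^{-js}e^{i\langle2^je_1,\,\cdot\rangle}\,\phi$ with $\phi\in C_c^\infty(B)$: each $f_N$ is supported in $B$, so $f_Ng=f_N$, while a direct computation of the dyadic decomposition gives $\|f_N|F^s_{p,q}(\R^d)\|\sim(N+1)^{1/q}$ and $\|f_N|F^s_{p_1,q_1}(\R^d)\|\sim(N+1)^{1/q_1}$; the multiplier inequality would then force $(N+1)^{1/q}\lesssim(N+1)^{1/q_1}$ for all $N$, which fails once $q<q_1$. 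This is the $F$-space counterpart of the mechanism behind the necessary conditions recorded in Proposition~\ref{prop-necessary}.
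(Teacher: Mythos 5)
The paper does not prove this proposition at all: it is quoted as \cite[Theorem 4.2.1]{SickelTriebel}, so your opening identification of the source already matches everything the paper does. Your additional sketch is a reasonable reconstruction, but note that it differs in technique from the cited source, whose proof runs via paramultiplication (Littlewood--Paley/paraproduct decompositions), whereas you work with the characterization by ball means of differences and the discrete Leibniz rule $\Delta_h^M(fg)=\sum_k\binom Mk(\Delta_h^kf)(\cdot+(M-k)h)\Delta_h^{M-k}g$. Within your route, the reduction to the diagonal case $q_1=q_2=\bar q$ by monotonicity in the fine index is correct; the pure terms $k\in\{0,M\}$ are treated by the standard maximal-function/H\"older/Sobolev chain, and your observation that $1/r_i<1/r<1$ forces $1<r_i<\infty$ is exactly what legitimizes $F^s_{p_i,q}\hookrightarrow L_{r_i}$ and the maximal inequality there; and the lacunary test functions $f_N=\sum_{j\le N}2^{-js}e^{i\langle 2^je_1,\cdot\rangle}\phi$ give a complete and correct proof that $q_1\le q$ (hence, by symmetry, $\max(q_1,q_2)\le q$) is necessary. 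The one place where your text is an outline rather than a proof is the mixed terms $0<k<M$: ``tuning the auxiliary integrability and fine exponents'' so that the two factors become $F^{s_1}$- and $F^{s_2}$-quantities is precisely the technical heart of the theorem, and neither that tuning nor the alternative atomic/$\varphi$-transform argument is actually carried out. Since the paper itself treats the proposition as an external result, this does not put you at odds with the paper, but as a standalone proof the sufficiency direction is incomplete at exactly that step, and for the purposes of this paper the honest course is what the paper does: cite \cite{SickelTriebel}.
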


As a further preparation we cite the following result from \cite{RunstSickel}.

\begin{prop}\label{prop-radial}
	The radial function $f_\beta(x)=|x|^\beta\zeta(x)$, where $\zeta$ is a smooth radial cut-off function (i.e.
	$\zeta(x)=1$ for $|x|\leq 1$ and $\zeta(x)=0$ for $|x|\geq 2$) belongs to $F^s_{p,q}(\R^d)$ if, and only if,  
	$s<\frac{d}{p}+\beta$.
\end{prop}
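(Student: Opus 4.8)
The plan is to reduce to the behaviour of $|x|^\beta$ near the origin, to decompose that singular part into dyadic shells, and to exploit the dilation behaviour of Triebel--Lizorkin quasi-norms; throughout one takes $\beta>-d$, which is exactly what makes $f_\beta$ a regular tempered distribution. First I would write $f_\beta=f_\beta^{(0)}+\sum_{j\ge 1}g_j$, where $f_\beta^{(0)}\in C_c^\infty(\R^d)$ is supported away from $0$ (hence belongs to every $F^s_{p,q}(\R^d)$), where $g_j$ is a dyadic localization of $f_\beta$ to the shell $\{|x|\sim 2^{-j}\}$, and where the series converges in $\mathcal S'(\R^d)$ by dominated convergence (using $\beta>-d$). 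The decisive structural feature is that, if the localizing bumps are taken to be the dilates $\phi_j:=\phi_0(2^j\,\cdot\,)$ of a single fixed $\phi_0\in C_c^\infty(\R^d)$ supported in an annulus, then $g_j(x)=2^{-j\beta}h(2^jx)$ for the one fixed profile $h(y):=|y|^\beta\phi_0(y)\in C_c^\infty(\R^d)$.

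The quantitative engine is the standard dilation estimate $\|h(2^j\,\cdot\,)\,|\,F^s_{p,q}(\R^d)\|\sim 2^{j(s-d/p)}\,\|h\,|\,F^s_{p,q}(\R^d)\|$, valid with constants independent of $j\ge 0$ for a fixed compactly supported $h$ (see e.g.\ \cite{Tr92}); it yields $\|g_j\,|\,F^s_{p,q}(\R^d)\|\sim 2^{j(s-d/p-\beta)}$. For the sufficiency part one assumes $s<d/p+\beta$, so that this exponent is negative, and sums the geometric series by means of the $\kappa$-triangle inequality of the quasi-norm, $\kappa:=\min(1,p,q)$:
\[
	\|f_\beta\,|\,F^s_{p,q}(\R^d)\|^{\kappa}
		\lesssim \|f_\beta^{(0)}\,|\,F^s_{p,q}(\R^d)\|^{\kappa}+\sum_{j\ge 1}2^{j\kappa(s-d/p-\beta)}<\infty\,,
\]
so $f_\beta\in F^s_{p,q}(\R^d)$.

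For the necessity part one assumes $s\ge d/p+\beta$ and argues with the Fourier-analytic definition. Setting aside the trivial case $\beta\in 2\N_0$ (there $f_\beta$ is a polynomial times a cut-off, hence smooth, and the asserted equivalence should of course be read for the remaining $\beta$ — in particular for the regime $\beta<0$ relevant here), the Fourier transform of $|x|^\beta$ agrees with $c_{d,\beta}|\xi|^{-d-\beta}$ away from the origin, and a scaling computation then gives, for large $j$ and small $|x|$,
\[
	\mathcal F^{-1}(\varphi_j\mathcal F f_\beta)(x)=2^{-j\beta}\,\psi(2^jx)+O(2^{-jM})\qquad\text{for every }M>0,
\]
with $\psi:=\mathcal F^{-1}(\varphi_1\,|\cdot|^{-d-\beta})\in\mathcal S(\R^d)$, the error coming from the cut-off $\zeta$. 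Since $\psi\not\equiv 0$ one can fix an open cone $\Gamma\subset\R^d$ and radii $0<a<b$ with $|\psi|\ge c_0>0$ on $\{y\in\Gamma:a\le|y|\le b\}$. I would then estimate $\|f_\beta\,|\,F^s_{p,q}(\R^d)\|$ from below by retaining, for each small $x\in\Gamma$, only the single index $j=j(x)$ with $2^{j(x)}|x|\in[a,b]$ (which is legitimate also for $q=\infty$), obtaining $2^{j(x)s}\,\bigl|\mathcal F^{-1}(\varphi_{j(x)}\mathcal F f_\beta)(x)\bigr|\gtrsim |x|^{\beta-s}$; raising to the power $p$ and integrating in polar coordinates over $\{x\in\Gamma:|x|\le\varepsilon\}$ leaves $\int_0^\varepsilon r^{(\beta-s)p+d-1}\,dr$, which diverges precisely when $s\ge d/p+\beta$. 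Hence $\|f_\beta\,|\,F^s_{p,q}(\R^d)\|=\infty$.

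The step I expect to be the main obstacle is this last pointwise lower bound on the Littlewood--Paley pieces: one must check that the cut-off $\zeta$ contributes only a rapidly decaying remainder, handle the possible vanishing of the (entire) function $\psi$ on spheres — the reason for passing to a cone $\Gamma$ and a band of radii $[a,b]$ rather than to a single sphere — and verify that the diagonal choice $j(x)\sim\log_2(1/|x|)$ stays in the admissible range $j\ge 0$, which is automatic once $|x|$ is small enough. The dyadic decomposition, the dilation identity, and the quasi-triangle estimate are all routine.
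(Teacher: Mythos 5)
The paper does not actually prove Proposition \ref{prop-radial}: it is quoted from Runst--Sickel \cite{RunstSickel} (Sect.~3.3.1 there, which also contains the logarithmic refinement cited later as Lemma \ref{lemma-radial-log}). So your argument cannot be compared with an in-paper proof; it is, however, essentially the standard argument behind the cited result: dyadic decomposition at the origin plus dilation for the ``if'' part, and a Littlewood--Paley lower bound coming from the homogeneity of $\mathcal{F}(|x|^\beta)$ for the ``only if'' part. Your remark that $\beta\in 2\N_0$ must be excluded is correct: for those $\beta$ the function $f_\beta$ is smooth and compactly supported, so the stated equivalence has to be read for the remaining $\beta$ (in the paper it is only applied to genuinely singular powers $\rho^{a-m}$).

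Two points in your write-up need repair. First, the two-sided dilation estimate $\|h(2^j\cdot)\,|\,F^s_{p,q}\|\sim 2^{j(s-d/p)}\|h\,|\,F^s_{p,q}\|$ for fixed compactly supported $h$ is the homogeneity property of \cite{SchnVyb} and is valid for $s>\sigma_{p,q}$; for $s\le\sigma_{p,q}$, in particular for strongly negative $s$, even the upper bound fails and saturates (for a bump $h$ with $\int h\neq 0$, $p=2$ and $s<-d/2$ one computes $\|h(\lambda\cdot)\,|\,H^s\|\sim\lambda^{-d}$, which is much larger than $\lambda^{s-d/2}$). Hence your sufficiency argument is complete only for $s>\sigma_{p,q}$ (the range the paper actually uses); for the remaining $s$ one must either use the correct saturated bound, under which the series still converges because $\beta>-d$, or argue via an embedding. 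Second, and more substantially, in the necessity part the reduction ``for each small $x\in\Gamma$ keep the single index $j(x)$ with $2^{j(x)}|x|\in[a,b]$'' tacitly requires $b\ge 2a$, whereas $\psi=\mathcal{F}^{-1}(\varphi_1|\cdot|^{-d-\beta})$ is (for radial $\varphi_1$) a radial oscillating function whose zero spheres may be spaced by less than a factor $2$; in that case no band of radial ratio $\ge 2$ on which $|\psi|\ge c_0$ exists, and the claimed pointwise bound $\gtrsim|x|^{\beta-s}$ on all of $\{x\in\Gamma:|x|\le\varepsilon\}$ is unjustified. The gap is easy to close: choose a thin region $A\subset\{y:a\le|y|\le b\}$ with $b<2a$ around a point where $\psi\neq 0$ such that $|\psi|\ge c_0$ on $A$, and bound the $F^s_{p,q}$-quasinorm from below by the $L_p$-integral over the pairwise disjoint union $\bigcup_{j\ge j_0}2^{-j}A$, picking on $2^{-j}A$ only the $j$-th Littlewood--Paley term; each piece contributes $\sim 2^{j(s-\beta)p-jd}$, and the resulting series diverges precisely when $s\ge d/p+\beta$. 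With these two corrections your proof is sound and matches the route of the cited reference.
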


\begin{remark}\label{remark-radial}
	Though formulated for radial functions, this clearly can be transferred to functions $\rho(x)^\beta$ and the domains
	$\R^d\setminus\R^\ell$ as in the proof of Proposition \ref{prop-necessary}. The resulting condition then reads as
	$s<\frac{d-\ell}{p}+\beta$ (independent of $q$).
\end{remark}

\begin{thm}\label{thm-Holder}
	Let $m\in\N$, $a\in\R$, $1<p<\infty$, $0<\tau<p$,   $0<q_2\leq q_1\leq\infty$, and $m>\sigma_{\tau,2}$.  Let
	$D=\R^d\setminus\R^\ell$ with $0\leq\ell\leq d-2$  and assume
	\[
		\frac{d+\ell}{d}m-a<(d-\ell)\Bigl(\frac{1}{\tau}-\frac{1}{p}\Bigr)
			\quad\text{and}\quad\frac{1}{\tau}-1<\frac{m}{d}<\frac{1}{p}\,.
	\]
	Then we have an estimate
	\begin{equation}\label{eq-Holder-rloc}
		\|\rho^{a-m}u|F^{m,\text{rloc}}_{\tau,q_1}(D)\|
			\lesssim\|\rho^{a-m}\zeta|F^m_{\eta,q_2}(\R^d)\|\cdot\|u|F^{m,\text{rloc}}_{p,q_2}(D)\|
	\end{equation}
	for all $u\in F^{m,\text{rloc}}_{p,q_2}(D)$ with compact support. Therein $\zeta$ is a smooth cut-off function
	w.r.t. $\supp u$ (i.e. compactly supported, infinitely differentiable, and $\zeta(x)=1$ on $\supp u$), and
	$\frac{1}{\eta}=\frac{m}{d}+\frac{1}{\tau}-\frac{1}{p}$. \\
	Moreover,  the following embedding  holds 
	\[
	\calk^m_{a,p}(D)\hookrightarrow F^{m,\text{rloc}}_{\tau,q_1}(D). 
	\]
\end{thm}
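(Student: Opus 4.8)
The plan is to prove the pointwise multiplication estimate \eqref{eq-Holder-rloc} first, and then deduce the embedding $\calk^m_{a,p}(D)\hookrightarrow F^{m,\text{rloc}}_{\tau,q_1}(D)$ from it by combining with the results of Section~\ref{ssec-Kmm}. For the estimate itself, recall from Definition~\ref{def-rloc} that the refined localization (quasi-)norm is built from the building blocks $\|\varphi_{j,l}(\rho^{a-m}u)|F^m_{\tau,q_1}(\R^d)\|$. The natural idea is to write $\rho^{a-m}u = (\rho^{a-m}\zeta)\cdot u$ on $\supp u$ (where $\zeta\equiv 1$) and apply the H\"older-type inequality for Triebel-Lizorkin spaces from Proposition~\ref{prop-Holder-F} with exponents $p_1=\eta$, $p_2=p$, $p=\tau$. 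One checks that $\tfrac{1}{\eta}=\tfrac{m}{d}+\tfrac1\tau-\tfrac1p$ together with $\tfrac1\tau-1<\tfrac md<\tfrac1p$ makes all three conditions $\tfrac1{r_i}=\tfrac1{p_i}-\tfrac sd>0$ and $\tfrac1r=\tfrac1\tau-\tfrac md<1$ hold with $s=m$; the exponent condition $q_2\le q_1$ is exactly the ``$\max(q_1,q_2)\le q$'' hypothesis of Proposition~\ref{prop-Holder-F} after noting the first factor $\rho^{a-m}\zeta$ can be placed in $F^m_{\eta,q_2}$. Summing the resulting estimates $\|\varphi_{j,l}(\rho^{a-m}u)|F^m_{\tau,q_1}\|\lesssim \|\rho^{a-m}\zeta|F^m_{\eta,q_2}\|\cdot\|\varphi_{j,l}u|F^m_{p,q_2}\|$ over $j,l$ in $\ell_\tau$ (and using $q_2\le q_1$, $\tau\le p$ to pass from the $\ell_{q_1}$/$\ell_\tau$ structure to the defining $\ell_p$-sum via monotonicity of $\ell_p$-norms) yields \eqref{eq-Holder-rloc}; one has to be slightly careful that the factor $\zeta$ can be chosen once and for all relative to $\supp u$, so the multiplier constant is uniform over the Whitney patches.

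For the embedding, I would first note $\rho^{a-m}\zeta\in F^m_{\eta,q_2}(\R^d)$ is \emph{finite}: by Remark~\ref{remark-radial} (applied to $\rho(x)^{a-m}$ on $\R^d\setminus\R^\ell$) this holds precisely when $m<\tfrac{d-\ell}{\eta}+(a-m)$, i.e. $m-a<\tfrac{d-\ell}{\eta}-m = (d-\ell)\big(\tfrac md+\tfrac1\tau-\tfrac1p\big)-m$; simplifying the right side gives exactly $(d-\ell)\big(\tfrac1\tau-\tfrac1p\big)-\tfrac{\ell+d}{d}m + m\cdot\tfrac{?}$... more cleanly: $(d-\ell)\tfrac md - m = -\tfrac{(d+\ell)m}{d}\cdot(-1)$... one checks the arithmetic collapses to the hypothesis $\tfrac{d+\ell}{d}m - a < (d-\ell)(\tfrac1\tau-\tfrac1p)$. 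So the cut-off weight has finite $F^m_{\eta,q_2}$-norm. Then, given $u\in\calk^m_{a,p}(D)$ with compact support, Lemma~\ref{lemma-Kma} gives $\rho^{m-a}u\in\calk^m_{m,p}(D)$, Theorem~\ref{thm-Kmm} identifies $\calk^m_{m,p}(D)=F^{m,\text{rloc}}_{p,2}(D)$, so $u=\rho^{a-m}(\rho^{m-a}u)$ with $\rho^{m-a}u\in F^{m,\text{rloc}}_{p,2}(D)$; apply \eqref{eq-Holder-rloc} with $q_2=2$ (legitimate since $q_2\le q_1$ forces nothing as long as we pick $q_2=2\le q_1$, and $m>\sigma_{\tau,2}$ is assumed) to get $\|u|F^{m,\text{rloc}}_{\tau,q_1}(D)\|\lesssim\|\rho^{m-a}u|F^{m,\text{rloc}}_{p,2}(D)\|\sim\|u|\calk^m_{a,p}(D)\|$. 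Finally remove the compact-support assumption by the localization Theorem~\ref{thm-localize} (or by a cut-off exhaustion argument): decompose a general $u\in\calk^m_{a,p}(D)$ via a Whitney resolution of unity $\{\varphi_{j,l}\}$, apply the compactly supported case to each $\varphi_{j,l}u$, and sum in $\ell_p$ using Theorem~\ref{thm-localize} on the Kondratiev side and Definition~\ref{def-rloc} on the refined-localization side.

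The main obstacle I anticipate is bookkeeping the interplay of the three summation indices $q_1$, $q_2$, and the outer $\ell_p$ (resp. $\ell_\tau$) structure when transferring Proposition~\ref{prop-Holder-F} (an inequality on $\R^d$) to the $\text{rloc}$ setting, and in particular verifying that the constant in \eqref{eq-Holder-rloc} is genuinely independent of $u$ — this requires that the single cut-off $\zeta$ works simultaneously for all patches $\varphi_{j,l}u$, which is fine since $\zeta\equiv1$ on $\supp u\supseteq\bigcup_{j,l}\supp(\varphi_{j,l}u)$, but the multiplier estimate of Lemma~\ref{lemma-pointwise}-type must be invoked with care since $\rho^{a-m}\zeta$ is not smooth of arbitrary order (it only lies in a finite-order space), so one should use Proposition~\ref{prop-Holder-F} directly rather than a $\cc^r$-multiplier result. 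A secondary subtlety is the precise dimensional arithmetic showing that the hypothesis $\tfrac{d+\ell}{d}m-a<(d-\ell)(\tfrac1\tau-\tfrac1p)$ is exactly the condition $\rho^{a-m}\zeta\in F^m_{\eta,q_2}$; this is a routine but error-prone computation that I would carry out carefully, noting $\tfrac{d-\ell}{\eta}=(d-\ell)\big(\tfrac md+\tfrac1\tau-\tfrac1p\big)$ and $\tfrac{(d-\ell)m}{d}-m=-\tfrac{\ell m}{d}$, so the threshold $m<\tfrac{d-\ell}{\eta}+a-m$ rearranges to $\tfrac{(d+\ell)m}{d}-a<(d-\ell)(\tfrac1\tau-\tfrac1p)$ as claimed.
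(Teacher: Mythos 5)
Your overall strategy for the second part (write $u=\rho^{a-m}(\rho^{m-a}u)$, use Lemma \ref{lemma-Kma} and Theorem \ref{thm-Kmm}, and check $\rho^{a-m}\zeta\in F^m_{\eta,q_2}(\R^d)$ via Proposition \ref{prop-radial} and Remark \ref{remark-radial}) is exactly the intended route, and your arithmetic reduction of the condition $m<\frac{d-\ell}{\eta}+a-m$ to $\frac{d+\ell}{d}m-a<(d-\ell)\bigl(\frac1\tau-\frac1p\bigr)$ is correct. The problem lies in how you try to prove \eqref{eq-Holder-rloc} itself. You apply Proposition \ref{prop-Holder-F} patch by patch to $\varphi_{j,l}(\rho^{a-m}u)=(\rho^{a-m}\zeta)\cdot(\varphi_{j,l}u)$ and then want to sum. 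But by Definition \ref{def-rloc} the left-hand side of \eqref{eq-Holder-rloc} is an $\ell_\tau$-sum of the patchwise $F^m_{\tau,q_1}$-norms, while the right-hand side is an $\ell_p$-sum of the patchwise $F^m_{p,q_2}$-norms, with $\tau<p$. Your patchwise estimates carry a constant $\|\rho^{a-m}\zeta|F^m_{\eta,q_2}\|$ that is \emph{uniform in} $(j,l)$, so after summing you are left needing
\begin{equation*}
	\Bigl(\sum_{j,l}\|\varphi_{j,l}u|F^m_{p,q_2}(\R^d)\|^{\tau}\Bigr)^{1/\tau}
		\lesssim\Bigl(\sum_{j,l}\|\varphi_{j,l}u|F^m_{p,q_2}(\R^d)\|^{p}\Bigr)^{1/p}\,,
\end{equation*}
which is the \emph{reverse} of the monotonicity of $\ell_p$-norms ($\ell_\tau\hookrightarrow\ell_p$ gives $\|\cdot\|_{\ell_p}\leq\|\cdot\|_{\ell_\tau}$, not the inequality you need). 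Since $\supp u$ meets the singular set in the interesting cases, infinitely many Whitney patches contribute, and this step genuinely fails; the only way to switch from $p$- to $\tau$-summation is to have geometric decay in $j$ of the patchwise constants (this is precisely what the localization proof of Theorem \ref{thm-embedding} exploits, using $\rho\sim2^{-j}$ on each patch and $N_j\lesssim2^{j\ell}$, together with H\"older's inequality in $j$), and your argument, having absorbed the whole weight into one global factor, has no such decay. The same objection applies to your final "remove compact support by summing the patchwise embeddings in $\ell_p$".

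The paper's proof avoids the patchwise decomposition altogether: it uses the equivalent characterization \eqref{eq-refined-weight}, so that the $F^{m,\text{rloc}}_{\tau,q_1}$-norm of $\rho^{a-m}u$ splits into $\|\rho^{a-m}u|F^m_{\tau,q_1}(\R^d)\|$ plus $\|\rho^{a-2m}u|L_\tau(\R^d)\|$ (spaces on $D=\R^d\setminus\R^\ell$ being identified with spaces on $\R^d$). The weighted Lebesgue part is handled by one application of the classical H\"older inequality with $\frac1r=\frac1\tau-\frac1p$, plus a polar-coordinate computation showing $\rho^{a-m}\zeta\in L_r$ under $m-a<(d-\ell)\bigl(\frac1\tau-\frac1p\bigr)$; the Triebel--Lizorkin part is handled by a single \emph{global} application of Proposition \ref{prop-Holder-F} with $p_1=\eta$, $p_2=p$, exactly as you set up, but to $\rho^{a-m}u=(\rho^{a-m}\zeta)\cdot u$ on all of $\R^d$ rather than patch by patch. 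Both resulting right-hand sides are then dominated by $\|u|F^{m,\text{rloc}}_{p,q_2}(D)\|$ by another use of \eqref{eq-refined-weight}. If you replace your summation step by this global argument via \eqref{eq-refined-weight}, the rest of your write-up (the parameter checks for Proposition \ref{prop-Holder-F}, the membership $\rho^{a-m}\zeta\in F^m_{\eta,q_2}$, and the deduction of the embedding through Lemma \ref{lemma-Kma} and Theorem \ref{thm-Kmm}) goes through as you describe.
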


\begin{proof}
	Recall that in order to establish the embedding  it suffices to show the two estimates
	\[
		\|\rho^{a-2m}u|L_\tau(\R^d)\|\lesssim\|\rho^{-m}u|L_p(\R^d)\|
		\qquad\text{and}\qquad
		\|\rho^{a-m}u|F^m_{\tau,q_1}(\R^d)\|\lesssim\|u|F^m_{p,q_2}(\R^d)\|\,.
	\]
	
	{\bf Step 1:} We first show the estimate
	\begin{equation}\label{est-1}
		\|\rho^{a-2m}u|L_\tau(\R^d)\|\lesssim  \|\rho^{a-m}\zeta| L_r(\R^d)\|\cdot \|\rho^{-m}u|L_p(\R^d)\|
	\end{equation}
	with $\frac{1}{r}=\frac{1}{\tau}-\frac{1}{p}$. In view of the Sobolev-type embedding
	$F^m_{\eta,q_2}(\R^d)\hookrightarrow L_r(\R^d)$ and \eqref{eq-refined-weight}, this implies the first half of the estimate
	\eqref{eq-Holder-rloc}.
	
	Since by assumption $p>\tau$, applying the classical H\"older inequality with
	$1=\frac{\tau}{p}+\frac{p-\tau}{p}$ yields
	\begin{equation}\label{est-01}
		\int_D\bigl|\rho(x)^{a-2m}u(x)\bigr|^\tau dx
			\leq\biggl(\int_{B_{R,d}}\rho^{(a-m)\frac{\tau p}{p-\tau}}\zeta(x)^{\frac{\tau p}{p-\tau}}dx
				\biggr)^{\frac{p-\tau}{p}}\biggl(\int_D\rho^{-mp}|u(x)|^p dx\biggr)^{\frac{\tau}{p}}\,,
	\end{equation}
	where $B_{R,d}$ stands for the ball in $\R^d$ around $0$ with radius $R$,  which gives  \eqref{est-1}.   
We additionally  show that this implies 	\begin{equation}\label{est-2}
		\|\rho^{a-2m}u|L_\tau(\R^d)\|\lesssim\|\rho^{-m}u|L_p(\R^d)\| 
	\end{equation}
	by proving that the first factor in \eqref{est-01}  is finite. 
	Since as a smooth function $\zeta$ is
	bounded, it will be omitted below.  
	Using the special structure of
	$D=\R^d\setminus\R^\ell \equiv \R^d\setminus (\R^\ell \times\{0\}^{d-\ell})$, it follows from splitting $x=(x',x'')$ with $x'\in\R^\ell$ and
	$x''\in\R^{d-\ell}$ that $\rho(x)=|x''|$; hence we can rewrite the integral using polar coordinates in $\R^{d-\ell}$
	\[
		\int_{B_{R,d}}\rho^{(a-m)\frac{\tau p}{p-\tau}}dx
			=\int_{B_{R,d}}|x''|^{(a-m)\frac{\tau p}{p-\tau}}dx
			\lesssim R^\ell\int_0^R \varrho^{(a-m)\frac{\tau p}{p-\tau}}\varrho^{d-\ell-1}d\varrho\,,
	\]
	observe $B_{R,d}\subset [-R,R]^\ell\times B_{R,d-\ell}$. The last integral is finite if, and only if
	\[
		(a-m)\frac{\tau p}{p-\tau}+d-\ell>0\iff m-a<(d-\ell)\Bigl(\frac{1}{\tau}-\frac{1}{p}\Bigr)\,.
	\]
	
	{\bf Step 2:} For the second half of the estimate \eqref{eq-Holder-rloc},  
	we shall apply Proposition \ref{prop-Holder-F}. For this, we put
	$\frac{1}{\eta}=\frac{m}{d}+\frac{1}{\tau}-\frac{1}{p}$. Since $\tau<p$ we have $0<\eta<\infty$ and also
	$\frac{1}{\eta}-\frac{m}{d}>0$. Together with the assumption $\frac{1}{p}>\frac{m}{d}> \frac{1}{\tau}-1$, all
	conditions for Proposition \ref{prop-Holder-F} are satisfied.  Then the inequality corresponding  to \eqref{eq-Holder-F}   with our choice of parameters now reads as
	\begin{equation}\label{est-3}
		\|\rho^{a-m}\cdot u|F^m_{\tau,q_1}(\R^d)\|
			\lesssim\|u|F^m_{p,q_2}(\R^d)\|\cdot\|\rho^{a-m}\zeta|F^m_{\eta,q_2}(\R^d)\|
	\end{equation}
	which together with  \eqref{eq-refined-weight} and Step 1 shows   \eqref{eq-Holder-rloc}.  Moreover,  from \eqref{est-3} we obtain 
	\[
		\|\rho^{a-m}u|F^m_{\tau,q_1}(\R^d)\|\lesssim\|u|F^m_{p,q_2}(\R^d)\|
	\]
	if we can verify $\rho^{a-m}\zeta\in F^m_{\eta,q_2}(\R^d)$. But this follows from Proposition \ref{prop-radial}
	and Remark \ref{remark-radial}: The resulting condition $m<\frac{d-\ell}{\eta}+a-m$ is satisfied in view of
	\[
		m<\frac{d-\ell}{\eta}+a-m
			\iff m-a<(d-\ell)\Bigl(\frac{m}{d}+\frac{1}{\tau}-\frac{1}{p}\Bigr)-m
				=(d-\ell)\Bigl(\frac{1}{\tau}-\frac{1}{p}\Bigr)-\frac{\ell}{d}m\,.
	\]
	This  shows the embedding and completes the proof. 
\end{proof}

\begin{remark} 
In conclusion,    we see from Theorem \ref{thm-Holder} 
that  the approach via H\"older inequalities sketched at the beginning of the section indeed turns out to be a viable option in order to establish embeddings between Kondratiev and refined localization spaces.  However, while the advantage of these H\"older-type arguments lies in their simplicity (they are the straightforward generalization of the arguments one would use for $\calk^m_{a,p}(D)\hookrightarrow\calk^m_{m,\tau}(D)$), so far they do not provide sufficient conditions matching the ones from Proposition \ref{prop-necessary} and Lemma \ref{lemma-necessary}. In the arguments above we have picked  up the quite restrictive additional condition
$\frac{1}{\tau}-1<\frac{m}{d}<\frac{1}{p}$, particularly $\frac{1}{\tau}-\frac{1}{p}<1$, and also the other condition is suboptimal because of the factor $\frac{d+\ell}{d}$ (except for the case $\ell=0$). 
This is  due to the fact that   the current version of Proposition \ref{prop-Holder-F} is formulated for arbitrary functions
$f\in F^s_{p_1,q_1}(\R^d)$ and therefore its application to $f=\rho^{a-m}$ (naturally) leads to suboptimal conditions on the parameters involved. Particularly the property that $\rho^{a-m}$  is essentially constant along $\R^\ell$ is neglected.  
In \cite[Appendix D]{hansen-habil}  possible modifications/improvements of this approach are presented, removing the mentioned factor $\frac{d+\ell}{d}$. This is done by 
invoking another property of Triebel-Lizorkin spaces, the Fubini-property (see \cite[Theorem 4.4]{Tr01}, \cite[Proposition 4]{Scharf}, \cite[Proposition 1.11]{Scharf2}) and the resulting conditions for the embedding in  Theorem  \ref{thm-Holder}  read as 
\[
		m-a<(d-\ell)\Bigl(\frac{1}{\tau}-\frac{1}{p}\Bigr)
			\quad\text{and}\quad\frac{1}{\tau}-1<\frac{m}{d-\ell}<\frac{1}{p}\,.
	\]
On the other hand,  we see that those arguments require the even more restrictive condition $\frac{1}{\tau}-1<\frac{m}{d-\ell}<\frac{1}{p}$. 
For that reason they are not repeated here.  In the next section we shall present another approach which remedies these shortcomings. 
\end{remark}

\subsection{Second approach via localization}
\label{ssec-embedding-localize}

We start with the prototypical situation $D=\R^d\setminus\R^\ell$, $0\leq\ell <d$. 

\begin{thm}\label{thm-embedding}
	Let $m\in\N$,  $1<p<\infty$,   $0<\tau<\infty$,  where $m>\sigma_{\tau,2}=d \big( \frac{1}{\min(1,\tau)}-1\big)$,  and $a\in\R$.  Moreover, assume either
	\[
		\tau<p\quad\text{and}\quad m-a<(d-\ell)\Bigl(\frac{1}{\tau}-\frac{1}{p}\Bigr)
	\]
	or $\tau=p$ and $a\geq m$. Furthermore, let $u\in\calk^m_{a,p}(\R^d\setminus\R^\ell)$ have compact support.
	Then it holds
	\[
		\|u|F^{m,\text{rloc}}_{\tau,2}(\R^d\setminus\R^\ell)\|
			\lesssim\|u|\calk^m_{a,p}(\R^d\setminus\R^\ell)\|\,.
	\]
\end{thm}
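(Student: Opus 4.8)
The plan is to prove the estimate by localization, reducing it to a single Whitney cube. I would fix a Whitney decomposition $\{Q_{j,k_l}\}_{j\geq 0,\,l=1,\dots,N_j}$ of $D=\R^d\setminus\R^\ell$ together with an associated resolution of unity $\{\varphi_{j,l}\}$ as in Definition~\ref{def-rloc}. The definition of the refined localization space then gives $\|u|F^{m,\text{rloc}}_{\tau,2}(D)\|^\tau=\sum_{j,l}\|\varphi_{j,l}u|F^m_{\tau,2}(\R^d)\|^\tau$, while Theorem~\ref{thm-localize} yields $\|u|\calk^m_{a,p}(D)\|^p\sim\sum_{j,l}\|\varphi_{j,l}u|\calk^m_{a,p}(D)\|^p$, so that it suffices to compare these two sums.

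First I would prove a termwise bound. For $(j,l)$ with $\varphi_{j,l}u\neq 0$ the cube $Q_{j,k_l}$ meets $\supp u$, and $\rho\sim 2^{-j}$ holds on $\supp\varphi_{j,l}\subset 2Q_{j,k_l}$; hence, for $|\alpha|\leq m$, $\|\partial^\alpha(\varphi_{j,l}u)|L_p(\R^d)\|\sim 2^{j(|\alpha|-a)}\|\rho^{|\alpha|-a}\partial^\alpha(\varphi_{j,l}u)|L_p(D)\|\leq 2^{j(m-a)}\|\varphi_{j,l}u|\calk^m_{a,p}(D)\|$, which via $F^m_{p,2}(\R^d)=W^m_p(\R^d)$ yields $\|\varphi_{j,l}u|F^m_{p,2}(\R^d)\|\lesssim 2^{j(m-a)}\|\varphi_{j,l}u|\calk^m_{a,p}(D)\|$. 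To trade $F^m_{p,2}$ for $F^m_{\tau,2}$ I would rescale $\varphi_{j,l}u$ dyadically onto a ball of fixed size and combine the homogeneity/dilation behaviour of Triebel--Lizorkin spaces with the elementary embedding $F^m_{p,2}(\R^d)\hookrightarrow F^m_{\tau,2}(\R^d)$ for functions supported in a fixed ball (available since $\tau\leq p$), producing the gain $\|\varphi_{j,l}u|F^m_{\tau,2}(\R^d)\|\lesssim 2^{-jd(1/\tau-1/p)}\|\varphi_{j,l}u|F^m_{p,2}(\R^d)\|$. Combining these, with $\kappa:=(m-a)-d(1/\tau-1/p)$, I obtain $\|\varphi_{j,l}u|F^m_{\tau,2}(\R^d)\|\lesssim 2^{j\kappa}\|\varphi_{j,l}u|\calk^m_{a,p}(D)\|$ for all $(j,l)$; the finitely many cubes with $j$ below a fixed threshold are harmless, since on them $\rho\sim 1$ and all the spaces involved are mutually comparable on a bounded set.

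The remaining step is the summation. In the case $\tau<p$ I would raise the termwise bound to the power $\tau$, sum over $(j,l)$, and apply H\"older's inequality in the double sum with exponents $p/\tau$ and $\frac{p}{p-\tau}$; the Kondratiev factor then collapses to $\|u|\calk^m_{a,p}(D)\|^\tau$ by Theorem~\ref{thm-localize}, and one is left with checking convergence of a geometric series of the form $\sum_{j,l}2^{j\kappa\tau p/(p-\tau)}$. Here the crucial geometric input is that, since $\supp u$ is compact and the relevant cubes (of side $2^{-j}$) lie in the slab $\{x:\dist(x,\R^\ell)\sim 2^{-j}\}$, their number satisfies $N_j\lesssim 2^{j\ell}$; using $\tau p/(p-\tau)=(1/\tau-1/p)^{-1}$, the series converges exactly when $\ell+\kappa(1/\tau-1/p)^{-1}<0$, which rearranges precisely to the hypothesis $m-a<(d-\ell)(1/\tau-1/p)$. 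In the remaining case $\tau=p$, $a\geq m$, one has $\kappa=m-a\leq 0$, so the termwise bound already reads $\|\varphi_{j,l}u|F^m_{p,2}(\R^d)\|\lesssim\|\varphi_{j,l}u|\calk^m_{a,p}(D)\|$, and summing the $p$-th powers concludes via Theorem~\ref{thm-localize}.

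The step I expect to be the main obstacle is the local dilation estimate: one needs $\|g|F^m_{\tau,2}(\R^d)\|\lesssim r^{d(1/\tau-1/p)}\|g|F^m_{p,2}(\R^d)\|$ for $g$ supported in a ball of radius $r\lesssim 1$, with the correct power of $r$ and a constant uniform in $r$, and this must hold throughout the range $0<\tau<p$, in particular for $\tau\leq 1$ where $F^m_{\tau,2}$ is no longer an ordinary Sobolev space. For $1<\tau<p$ it is just H\"older's inequality applied to each $\|\partial^\alpha g|L_\tau\|$; for $\tau\leq 1$ I would invoke the standard localization and homogeneity properties of Triebel--Lizorkin spaces from \cite{Tr06,Tr08}. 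A secondary, more bookkeeping-type point is justifying $N_j\lesssim 2^{j\ell}$ and that only boundedly many Whitney cubes lie at bounded distance from $\R^\ell$ within a neighbourhood of $\supp u$ --- which is precisely where compactness of $\supp u$ enters.
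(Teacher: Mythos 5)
Your argument is correct and follows essentially the same route as the paper's proof: Whitney decomposition combined with the localization of Kondratiev norms (Theorem \ref{thm-localize}), a per-cube comparison obtained from $\rho\sim 2^{-j}$, dyadic rescaling with the homogeneity property of Triebel--Lizorkin spaces \cite{SchnVyb} and the embedding $F^m_{p,2}\hookrightarrow F^m_{\tau,2}$ on sets of fixed bounded size, and finally H\"older's inequality together with the count $N_j\lesssim 2^{j\ell}$ to pass from $p$- to $\tau$-summation, which is exactly where the hypothesis $m-a<(d-\ell)(\tfrac1\tau-\tfrac1p)$ (resp.\ $\tau=p$, $a\geq m$) enters. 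The step you flag as the main obstacle is handled in the paper by the same two ingredients you name, so there is no gap.
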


\begin{remark}\label{rem-embedding-technical}
	The technical condition $m>d \big( \frac{1}{\min(1,\tau)}-1\big)$ stems from the definition of the space
	$F^{m,\text{rloc}}_{\tau,2}(D)$ and is only an additional  restriction if  $\tau< 1$.  It  results in the condition
	$$
		(d-\ell)\Bigl(\frac{1}{\tau}-\frac{1}{p}\Bigr)+a>d\Bigl(\frac{1}{\tau}-1\Bigr)
			\iff a>\ell\Bigl(\frac{1}{\tau}-\frac{1}{p}\Bigr)+d\Bigl(\frac{1}{p}-1\Bigr)\,.
	$$
	However, it should be noted that $m>\sigma_{\tau,2}$ automatically is fulfilled whenever we have an
	embedding $F^m_{\tau,2}(\R^d)\hookrightarrow L_p(\R^d)$, since such an embedding requires $\tau\leq p$
	and	$m>d(\frac{1}{\tau}-\frac{1}{p})\geq\sigma_{\tau,2}$. 
	We   conjecture that  
	there is a counterpart of Theorem \ref{thm-embedding} also for
	$m\leq\sigma_{\tau,2}$, 
	but since this requires extending the definition of refined localization
	spaces as well as the characterization \eqref{eq-refined-weight} to   parameters $m\leq\sigma_{\tau,2}$,  this will be
	postponed to future publications.
\end{remark}

\begin{proof}
	Let a Whitney decomposition of $\R^d\setminus\R^\ell$ be given, together with a corresponding resolution of
	unity as in Definition \ref{def-rloc}. As a consequence of the assumed compact support of $u$, all numbers $N_j$
	can be treated as being finite: We can restrict our considerations to a decomposition of
	$[-2^J,2^J]^d\setminus\R^\ell$ for some sufficiently large integer $J$, where the condition
	\eqref{whitney-decomp} is only enforced w.r.t. the distance to $\R^\ell$. The main aspect of this reduction is that
	now all numbers $N_j$ are finite, together with an estimate $N_j\leq c_0 2^{j\ell}$ for $j\geq J$.
	
	Our starting point is the corresponding decomposition of Kondratiev spaces from Theorem \ref{thm-localize},
	\[
		\|u|\calk^m_{a,p}(\R^d\setminus\R^\ell)\|^p
			\sim\sum_{j=0}^\infty\sum_{l=1}^{N_j}\|\varphi_{j,l}u|\calk^m_{a,p}(\R^d\setminus\R^\ell)\|^p\,.
	\]
	
	We then note that for $j\leq j_0$ we have $\rho(x)\sim 1$ for all $x\in 2Q_{j,k_l}$, and for $j\geq j_0$ we have
	$\rho(x)\sim 2^{-j}$ on $2Q_{j,k_l}$. Hence we can reformulate the localized norm
	\[
		\|u|\calk^m_{a,p}(\R^d\setminus\R^\ell)\|^p
			\sim\sum_{j<j_0}\sum_{l=1}^{N_j}\|\varphi_{j,l}u|W^m_p(2Q_{j,k_l})\|^p
				+\sum_{j\geq j_0}\sum_{l=1}^{N_j}\|\varphi_{j,l}u|\calk^m_{a,p}(\R^d\setminus\R^\ell)\|^p\,.
	\]
	For the first sum, we use the identification $W^m_p(\Omega)=F^m_{p,2}(\Omega)$, together with the standard
	embedding $L_p(\Omega)\hookrightarrow L_\tau(\Omega)$, to obtain
	\[
		\|\varphi_{j,l}u|W^m_p(2Q_{j,k_l})\|
			\gtrsim\|\varphi_{j,l}u|F^m_{\tau,2}(2Q_{j,k_l})\|=\|\varphi_{j,l}u|F^m_{\tau,2}(\R^d)\|\,.
	\]
	The second type of terms we have to treat with more care. Here we shall apply a scaling argument: A
	substitution $y=2^{j-j_0}x$ maps $Q_{j,k_l}$ onto $Q_{j_0,k_l}$, thus,  
	\begin{align*}
		\|&\varphi_{j,k_l}u|\calk^m_{a,p}(\R^d\setminus\R^\ell)\|^p\\
			&=\sum_{|\alpha|\leq m}\int_{2Q_{j,k_l}}
				\bigl|\rho^{|\alpha|-a}\partial^\alpha(\varphi_{j,k_l}u)\bigr|^p dx\\
			&=\sum_{|\alpha|\leq m}\int_{2Q_{j_0,k_l}}\bigl|\rho^{|\alpha|-a}(2^{-j+j_0}y)
				\partial^\alpha\bigl(\varphi_{j,k_l}u\bigr)(2^{-j+j_0}\cdot)\bigr|^p 2^{d(-j+j_0)}dy\\
			&\sim 2^{d(-j+j_0)}\sum_{|\alpha|\leq m}2^{(|\alpha|-a)(-j+j_0)p}2^{|\alpha|(j-j_0)p}
				\int_{2Q_{j_0,k_l}}\bigl|\rho^{|\alpha|-a}(y)
				\partial^\alpha\bigl(\varphi_{j,k_l}(2^{-j+j_0}\cdot)u(2^{-j+j_0}\cdot)\bigr)(y)\bigr|^p dx\\
			&\sim 2^{(d-ap)(-j+j_0)}
				\|\varphi_{j,k_l}(2^{-j+j_0}\cdot)u(2^{-j+j_0}\cdot)|W^m_p(2Q_{j_0,k_l})\|^p\,.
	\end{align*}
	This needs once more to be combined with $W^m_p(\Omega)=F^m_{p,2}(\Omega)$ and
	$L_p(\Omega)\hookrightarrow L_\tau(\Omega)$ for $\Omega=Q_{j_0,k_l}$ (we always have $\tau\leq p$).
	Moreover, we need also the scaling properties of Triebel-Lizorkin spaces \cite{SchnVyb}, which gives
	\[
		\|\varphi_{j,k_l}(2^{-j+j_0}\cdot)u(2^{-j+j_0}\cdot)|F^m_{\tau,2}(\R^d)\|
			\sim 2^{(-j+j_0)(m-d/\tau)}\|\varphi_{j,k_l}u|F^m_{\tau,2}(\R^d)\|\,.
	\]
	Altogether we have found
	\begin{align*}
		\|u|&\calk^m_{a,p}(\R^d\setminus\R^\ell)\|^p\\
			&\gtrsim\sum_{j<j_0}\sum_{l=1}^{N_j}\|\varphi_{j,l}u|F^m_{\tau,2}(\R^d)\|^p
				+\sum_{j\geq j_0}\sum_{l=1}^{N_j}2^{(m-a-d(\frac{1}{\tau}-\frac{1}{p}))(-j+j_0)p}
					\|\varphi_{j,k_l}u|F^m_{\tau,2}(\R^d)\|^p\,.
	\end{align*}
	From this, the case $\tau=p$ and $m-a\leq 0$ immediately follows. In the case $\tau<p$ it remains to switch from
	$p$-summation to $\tau$-summation. Here also the estimate for $N_j$ comes into play. We can apply H\"older's
	inequality twice, and obtain with $1=\frac{\tau}{p}+\frac{p-\tau}{p}$ and
	$\gamma=m-a-(d-\ell)(\frac{1}{\tau}-\frac{1}{p})<0$
	\begin{align*}
		\|u|&\calk^m_{a,p}(\R^d\setminus\R^\ell)\|^\tau\\
			&\gtrsim\sum_{j<j_0}\sum_{l=1}^{N_j}\|\varphi_{j,l}u|F^m_{\tau,2}(\R^d)\|^\tau\\
			&\qquad +\Biggl(\sum_{j\geq j_0}2^{\gamma(-j+j_0)p}2^{jl\frac{p-\tau}{\tau}}
				\biggl(\sum_{l=1}^{N_j}\|\varphi_{j,k_l}u|F^m_{\tau,2}(\R^d)\|^p
				\biggr)^{\frac{\tau}{p}\cdot\frac{p}{\tau}}\Biggr)^{\frac{\tau}{p}}
				\biggl(\sum_{j\geq j_0}2^{\gamma(j-j_0)\frac{p\tau}{p-\tau}}\biggr)^{\frac{p-\tau}{p}}\\
			&\geq\sum_{j<j_0}\sum_{l=1}^{N_j}\|\varphi_{j,l}u|F^m_{\tau,2}(\R^d)\|^\tau
				+\sum_{j\geq j_0}N_j^{\frac{p-\tau}{p}}
					\biggl(\sum_{l=1}^{N_j}\|\varphi_{j,k_l}u|F^m_{\tau,2}(\R^d)\|^p\biggr)^{\frac{\tau}{p}}\\
			&\geq\sum_{j<j_0}\sum_{l=1}^{N_j}\|\varphi_{j,l}u|F^m_{\tau,2}(\R^d)\|^\tau
				+\sum_{j\geq j_0}\sum_{l=1}^{N_j}\|\varphi_{j,k_l}u|F^m_{\tau,2}(\R^d)\|^\tau
				\sim\|u|F^{m,\text{rloc}}_{\tau,2}(\R^d\setminus\R^\ell)\|^\tau\,.
	\end{align*}
	This completes the proof.
\end{proof}

As an immediate consequence, we find:

\begin{thm}\label{thm-embedding-2}
	Let $D\subset\R^d$ be a bounded Lipschitz domain with piecewise smooth boundary,  let   $\delta$ denote  the dimension of the singular set of the domain $D$,  and 
let $m\in \N$, $a\in \R$,  $1<p<\infty$, and $0<\tau<\infty$,  where $m>d\big(\frac{1}{\min(1,\tau)}-1\big)$.  Then the embedding
\[
\calk^m_{a,p}(D)\hookrightarrow F^m_{\tau,2}(D)
\]
holds if,  and only if,  either $\tau<p$ and $m-a<(d-\delta)(\frac{1}{\tau}-\frac{1}{p})$ or $\tau=p$ and $m\leq a$.
\end{thm}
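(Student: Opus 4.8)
I would prove the sufficiency by reducing to the model domains $D=\R^d\setminus\R^\ell$ handled in Theorem \ref{thm-embedding}, by means of a finite localization and the diffeomorphism invariance of Section \ref{ssec-diffeo}, and prove the necessity by testing the putative embedding against powers of $\rho$. As a first step I would fix a finite open cover $\overline D\subset\bigcup_{i=1}^M U_i$ with a subordinate smooth partition of unity $\{\varphi_i\}_{i=1}^M$ of three types: (a) sets meeting exactly one smooth face of the singular set $S$ (a vertex, or the interior of an edge), chosen so small that --- following the construction in the proof of Corollary \ref{cor-Kmm} --- after rotation, translation and a bounded diffeomorphism $\psi_i$ the pair $(U_i\cap D,\rho)$ is identified with a neighbourhood in $\R^d\setminus\R^{\ell_i}$ with $\ell_i\leq\delta$ and $\rho_S\circ\psi_i^{-1}\sim\rho_{\R^{\ell_i}}$; (b) sets touching only the regular (smooth, singularity-free) part of $\partial D$; and (c) interior sets. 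On the sets of type (b) and (c) the weight satisfies $\rho\sim 1$, so $\calk^m_{a,p}$ coincides there locally with $W^m_p=F^m_{p,2}$.

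For the sufficiency assume $\tau<p$ and $m-a<(d-\delta)(\tfrac1\tau-\tfrac1p)$; the alternative $\tau=p$, $m\leq a$ is treated identically via the corresponding case of Theorem \ref{thm-embedding}. Writing $u=\sum_{i=1}^M\varphi_i u$ and using the quasi-triangle inequality on these finitely many terms, it suffices to bound each $\|\varphi_i u|F^m_{\tau,2}(D)\|$. On charts of type (b) and (c) I would use $\|\varphi_i u|F^m_{\tau,2}(D)\|\lesssim\|\varphi_i u|F^m_{p,2}(D)\|=\|\varphi_i u|W^m_p(D)\|\lesssim\|\varphi_i u|\calk^m_{a,p}(D)\|$, the first step being the elementary embedding $F^m_{p,2}\hookrightarrow F^m_{\tau,2}$ for $\tau\leq p$ on the bounded Lipschitz domain $U_i\cap D$. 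On a chart $U_i$ of type (a) I would instead use $\|\varphi_i u|F^m_{\tau,2}(D)\|\lesssim\|\varphi_i u|F^{m,\text{rloc}}_{\tau,2}(D)\|$ (Remark \ref{remark-rloc}(iv)), transfer to $\R^d\setminus\R^{\ell_i}$ via $\psi_i$, Lemma \ref{lemma-diffeo} and the locality of the refined-localization construction (together with Lemma \ref{lemma-pointwise}), apply Theorem \ref{thm-embedding} --- its hypothesis $m-a<(d-\ell_i)(\tfrac1\tau-\tfrac1p)$ being implied by $\ell_i\leq\delta$, and $m>\sigma_{\tau,2}$ being assumed --- and transfer back using the diffeomorphism invariance of Kondratiev spaces (provable along the lines of Lemma \ref{lemma-Kma}, using the chain rule and Stein's estimates $|\partial^\alpha\rho|\lesssim\rho^{1-|\alpha|}$), obtaining $\|\varphi_i u|F^m_{\tau,2}(D)\|\lesssim\|\varphi_i u|\calk^m_{a,p}(D)\|$. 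A finite-sum Leibniz estimate gives $\sum_i\|\varphi_i u|\calk^m_{a,p}(D)\|\lesssim\|u|\calk^m_{a,p}(D)\|$, which finishes the embedding. Note that the passage through $F^{m,\text{rloc}}_{\tau,2}$ is confined to the type-(a) charts: on all of $D$ one can only reach $F^m_{\tau,2}(D)$, since the factor $\delta^{-m}$ in \eqref{eq-refined-weight} would otherwise force a vanishing condition at the regular boundary that generic $u\in\calk^m_{a,p}(D)$ do not satisfy.

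For the necessity, $\tau>p$ is ruled out by Lemma \ref{lemma-necessary}. If $\tau\leq p$ and $m-a\geq(d-\delta)(\tfrac1\tau-\tfrac1p)$ --- which for $\tau=p$ means $a<m$ --- I would localize near a smooth face of the singular set of maximal dimension $\delta$, straighten it to $\R^d\setminus\R^\delta$, and test with $u=\rho^\beta\zeta$ for a cutoff $\zeta$ supported away from the regular boundary: a short computation in polar coordinates in $\R^{d-\delta}$ shows $\rho^\beta\zeta\in\calk^m_{a,p}(D)$ iff $\beta>a-\tfrac{d-\delta}{p}$, whereas Proposition \ref{prop-radial} and Remark \ref{remark-radial} give $\rho^\beta\zeta\notin F^m_{\tau,2}(\R^d)$ whenever $\beta\leq m-\tfrac{d-\delta}{\tau}$; since $\partial D$ and $\R^\delta$ are Lebesgue null, the restriction to $D$ then has no extension in $F^m_{\tau,2}(\R^d)$. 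When $m-a>(d-\delta)(\tfrac1\tau-\tfrac1p)$ one has $a-\tfrac{d-\delta}{p}<m-\tfrac{d-\delta}{\tau}$, so an admissible $\beta$ exists; this covers $\tau<p$ with strict inequality as well as $\tau=p$ with $a<m$. I expect the main obstacle to be the remaining borderline case $\tau<p$, $m-a=(d-\delta)(\tfrac1\tau-\tfrac1p)$: here $\rho^\beta$ no longer separates the two spaces, and one must pass to the logarithmically refined test function $u=\rho^{m-(d-\delta)/\tau}(1+|\log\rho|)^\lambda\zeta$ with $-\tfrac1\tau<\lambda<-\tfrac1p$, as in Proposition \ref{prop-necessary}, and show that it lies in $\calk^m_{a,p}(D)$ but not in $F^m_{\tau,2}(D)$; the latter requires a sharp logarithmic refinement of Proposition \ref{prop-radial}, a computation already carried out for $F^m_{\tau,q}(D)$ in \cite{Hansen2} that can simply be cited. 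A secondary technical point, essentially already done in Corollary \ref{cor-Kmm}, is arranging the type-(a) charts so that the diffeomorphisms respect both the Whitney/refined-localization structure and the distance $\rho_S$.
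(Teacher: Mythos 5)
Your sufficiency argument breaks down precisely on the type-(a) charts. You bound $\|\varphi_i u|F^m_{\tau,2}(D)\|$ by $\|\varphi_i u|F^{m,\text{rloc}}_{\tau,2}(D)\|$ and then want to transfer the latter to the model domain $\R^d\setminus\R^{\ell_i}$ via Lemma \ref{lemma-diffeo}. But in \eqref{eq-refined-weight} the weight for $F^{m,\text{rloc}}_{\tau,2}(D)$ is $\dist(\cdot,\partial D)^{-m}$, i.e.\ the distance to the \emph{whole} boundary, and every neighbourhood of a vertex or edge point necessarily contains pieces of the adjacent smooth faces; there this distance is not comparable to the Kondratiev weight $\rho$ (distance to the singular set only), and no bounded diffeomorphism identifies $D$ with $\R^d\setminus\R^{\ell_i}$ in the boundary-matching way Lemma \ref{lemma-diffeo} requires. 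Concretely, a function equal to $1$ near an interior point of a face inside a type-(a) chart lies in $\calk^m_{a,p}(D)$, while $\|\dist(\cdot,\partial D)^{-m}\varphi_i u|L_\tau(D)\|=\infty$ as soon as $m\tau\geq 1$, so your intermediate quantity $\|\varphi_i u|F^{m,\text{rloc}}_{\tau,2}(D)\|$ is generically infinite: the obstruction you correctly note in your closing remark does \emph{not} disappear on the singular-set charts. What Theorem \ref{thm-embedding} controls is the rloc norm relative to the singular set, i.e.\ membership in $F^{m,\text{rloc}}_{\tau,2}(\R^d\setminus S)$, and to exploit this one must first pass from $D$ to $\R^d\setminus S$. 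That is exactly the missing ingredient: the boundedness of Stein's extension operator $\mathfrak{E}:\calk^m_{a,p}(D)\rightarrow\calk^m_{a,p}(\R^d\setminus S)$ from \cite{Hansen1}, used in Step 2 of Corollary \ref{cor-Kmm}, which is what the paper's proof refers to. One extends $u$, applies the localized Theorem \ref{thm-embedding} (with Lemma \ref{lemma-diffeo} for curved faces) on $\R^d\setminus S$, uses $F^{m,\text{rloc}}_{\tau,2}(\R^d\setminus S)\hookrightarrow F^m_{\tau,2}(\R^d\setminus S)\cong F^m_{\tau,2}(\R^d)$, and restricts back to $D$. Without some extension across the regular boundary your chain cannot close: extending $\varphi_i u$ by zero outside $D$ destroys $W^m_p$-regularity across the faces, so $\varphi_i u$ is not a legitimate input for Theorem \ref{thm-embedding} on the model domain.

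The necessity part is essentially the paper's (Lemma \ref{lemma-necessary} plus the test functions $\rho^\beta\zeta$ and the logarithmic borderline function from Proposition \ref{prop-necessary}); note that the logarithmic refinement of Proposition \ref{prop-radial} you propose to import from \cite{Hansen2} is already available as Lemma \ref{lemma-radial-log}. One caveat: the assertion that ``since $\partial D$ and $\R^\delta$ are Lebesgue null, the restriction to $D$ has no extension in $F^m_{\tau,2}(\R^d)$'' is a non sequitur — an extension only has to agree with $\rho^\beta\zeta$ on $D$, and near the singular set the complement of $\overline D$ has positive measure, so non-membership of the globally defined $\rho^\beta\zeta$ in $F^m_{\tau,2}(\R^d)$ does not by itself exclude other extensions; one needs a one-sided (cone-localized or intrinsic) version of Proposition \ref{prop-radial}/Lemma \ref{lemma-radial-log}, the situation being clean only on the model domains $\R^d\setminus\R^\ell$, where $F^m_{\tau,2}$ on the domain can be identified with $F^m_{\tau,2}(\R^d)$. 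The paper is admittedly similarly terse on this point, but the substantive gap in your proposal is the missing extension argument in the sufficiency part.
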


\begin{proof}
	This follows by the same localization arguments as in the proof of Corollary \ref{cor-Kmm} together with
	the invariance under diffeomorphisms (Lemma \ref{lemma-diffeo})and the obtained necessary conditions in Proposition \ref{prop-necessary}.
\end{proof}

\begin{remark}
	(i) On polytopes the condition will read as $m-a<2(\frac{1}{\tau}-\frac{1}{p})$, independent of the dimension,
	since the singular set will always contain $(d-2)$-faces.
	
	
	(ii) If we compare these embeddings with the standard one $W^m_p(D)\hookrightarrow F^m_{\tau,2}(D)$, which
	in turn is directly based on $L_p(D)\hookrightarrow L_\tau(D)$, we can also use the following interpretation: This
	standard embedding extends to functions with controlled/limited blow-up for their higher derivatives (hence an
	upper bound on the exponent $m-a$ of the weight function for $m$th order derivatives), at the expense of more
	rigid behavior of low order derivatives (here a certain decay towards the singular set is required, which for
	increasing $m$ then translates to vanishing traces).
\end{remark}

To close this subsection, we shall combine the embedding with the regularity result from Proposition
\ref{prop-regularity}.

\begin{thm}\label{thm-reg-pde}
	Let $D\subset\R^d$, $d=2,3$, be a bounded polyhedral domain without cracks. Further, let the matrix function
	$A:D\rightarrow\bigl(\mathcal{K}^m_{0,\infty}(D)\bigr)^{d\times d}$ fulfill the assumptions from Proposition
	\ref{prop-regularity}. Then, for all $f\in\calk^{m-1}_{a-1}(D)$, $|a|<\min(\overline a,m)$, the solution $u$ to
	\eqref{eq:PDE} belongs to the space $F^{m+1}_{\tau,2}(D)$, where
	$\tau<\bigl(\frac{m-a}{d-\delta}+\frac{1}{2}\bigr)^{-1}$.
\end{thm}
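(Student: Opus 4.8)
The plan is to chain together the two main results already established: the regularity statement of Proposition \ref{prop-regularity} and the embedding of Theorem \ref{thm-embedding-2}. First I would invoke Proposition \ref{prop-regularity}: under the stated assumptions on $A$ and the domain $D$, there is some $\overline a>0$ so that for every $m\in\N_0$, every $|a|<\overline a$, and every right-hand side $f\in\calk^{m-1}_{a-1}(D)$, the problem \eqref{eq:PDE} has a unique solution $u\in\calk^{m+1}_{a+1}(D)$, with the corresponding a priori estimate. So the first step simply records that $u$ lies in the Kondratiev space $\calk^{m+1}_{a+1}(D)$.

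The second step is to apply Theorem \ref{thm-embedding-2} to this Kondratiev space. Here the index shift has to be tracked carefully: the smoothness parameter in Theorem \ref{thm-embedding-2} is now $m+1$ (playing the role of ``$m$'' there), and the weight parameter is $a+1$ (playing the role of ``$a$''). The embedding $\calk^{m+1}_{a+1}(D)\hookrightarrow F^{m+1}_{\tau,2}(D)$ then holds provided $\tau<p=2$ and
\[
	(m+1)-(a+1)<(d-\delta)\Bigl(\frac{1}{\tau}-\frac{1}{2}\Bigr),
\]
i.e. $m-a<(d-\delta)(\frac1\tau-\frac12)$. Solving this inequality for $\tau$ gives $\frac1\tau>\frac{m-a}{d-\delta}+\frac12$, which is exactly the stated condition $\tau<\bigl(\frac{m-a}{d-\delta}+\frac12\bigr)^{-1}$. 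One also has to check the technical restriction $m+1>d\bigl(\frac{1}{\min(1,\tau)}-1\bigr)=\sigma_{\tau,2}$; as noted in Remark \ref{rem-embedding-technical}, this is automatic once $\tau$ satisfies the embedding condition together with $\tau\le 2$, since then $m+1>(d-\delta)(\frac1\tau-\frac12)\ge\dots$ — more precisely, $\sigma_{\tau,2}<d(\frac1\tau-\frac12)$ and the chosen $\tau$ forces $m+1$ above that bound, so no genuinely new restriction appears.

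The only point requiring a little attention is the compatibility of the parameter ranges: Proposition \ref{prop-regularity} needs $|a|<\overline a$, while the embedding needs $a$ small enough that the target $\tau$ stays positive and the $\sigma_{\tau,2}$-constraint is met; both are guaranteed by the hypothesis $|a|<\min(\overline a,m)$ together with the stated choice of $\tau$. Since $p=2$ here, the case $\tau=p$ of Theorem \ref{thm-embedding-2} is irrelevant and we are always in the regime $\tau<p$. I do not expect a serious obstacle in this argument — it is essentially a bookkeeping exercise combining two black boxes — the main thing to get right is the $m\mapsto m+1$, $a\mapsto a+1$ substitution and the algebra that turns the embedding inequality into the displayed bound on $\tau$. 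Assembling these, for any $f\in\calk^{m-1}_{a-1}(D)$ with $|a|<\min(\overline a,m)$ the solution $u$ of \eqref{eq:PDE} satisfies $u\in\calk^{m+1}_{a+1}(D)\hookrightarrow F^{m+1}_{\tau,2}(D)$ for all such $\tau$, which is the claim.
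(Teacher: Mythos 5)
Your main chain is exactly the paper's (implicit) proof: insert the Kondratiev regularity $u\in\calk^{m+1}_{a+1}(D)$ from Proposition \ref{prop-regularity} into Theorem \ref{thm-embedding-2} with smoothness index $m+1$, weight $a+1$ and $p=2$, and the bookkeeping is right -- $(m+1)-(a+1)=m-a<(d-\delta)\bigl(\frac{1}{\tau}-\frac{1}{2}\bigr)$ is equivalent to $\tau<\bigl(\frac{m-a}{d-\delta}+\frac{1}{2}\bigr)^{-1}$, and $\tau<2=p$ is automatic because $a<m$.

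The one step that does not hold up is your dismissal of the hypothesis $m+1>\sigma_{\tau,2}$. The embedding condition bounds $m-a$ from above (equivalently $1/\tau$ from below); it does not force $m+1>d\bigl(\frac{1}{\tau}-\frac{1}{2}\bigr)$, and the remark you cite makes the condition automatic only under $m+1>d\bigl(\frac{1}{\tau}-\frac{1}{p}\bigr)$, which is not assumed here. Concretely, take $d=2$, $\delta=0$, $m=1$, $a=0$: the claimed range is $\tau<1$, but for $\tau=2/5$ one has $\sigma_{\tau,2}=3>m+1=2$, so Theorem \ref{thm-embedding-2} is not applicable as stated even though $m-a<(d-\delta)\bigl(\frac{1}{\tau}-\frac{1}{2}\bigr)$ holds. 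To cover the full stated range one should apply Theorem \ref{thm-embedding-2} only for $\tau>\frac{d}{d+m+1}$ (where $m+1>\sigma_{\tau,2}$ is satisfied) and then pass to smaller $\tau$ using the monotonicity $F^{m+1}_{\tau_1,2}(D)\hookrightarrow F^{m+1}_{\tau_2,2}(D)$ for $\tau_2\leq\tau_1$ on the bounded domain $D$; note moreover that if $\bigl(\frac{m-a}{d-\delta}+\frac{1}{2}\bigr)^{-1}\leq\frac{d}{d+m+1}$ (possible for large $m-a$), even this bootstrap is unavailable with the paper's stated tools, which is precisely the regime $m\leq\sigma_{\tau,2}$ the paper defers to future work. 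The paper itself is silent on this point, so apart from this incorrectly justified (and, in part, genuinely missing) step your argument coincides with the paper's.
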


\subsection{The reverse embedding}

Though of less interest for applications such as adaptive approximation of solution of elliptic PDEs, we shall briefly discuss the reverse embeddings $F^{m,\text{rloc}}_{p,2}(\R^d\setminus S)\hookrightarrow\calk^m_{a,p}(\R^d\setminus S)$, as they complement the results of the previous section and further demonstrate the close relations between Kondratiev and refined localization spaces.

\begin{prop}\label{prop-necessary-2}
	  Let    $\delta$ denote  the dimension of the singular set of the domain $D$ and let either
	\[
		m-a=(d-\delta)\Bigl(\frac{1}{\tau}-\frac{1}{p}\Bigr)\quad\text{and}\quad a>m
	\]
	or
	\[
		m-a<(d-\delta)\Bigl(\frac{1}{\tau}-\frac{1}{p}\Bigr)\,.
	\]
	Then there exists a function $u\in F^{m,\text{rloc}}_{\tau,2}(D)\setminus\calk^m_{a,p}(D)$. Moreover, a
	continuous embedding $F^{m,\text{rloc}}_{\tau,2}(D)\hookrightarrow\calk^m_{a,p}(D)$ implies $p\leq\tau$.
\end{prop}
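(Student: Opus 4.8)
The plan is to adapt the proofs of Proposition~\ref{prop-necessary} and Lemma~\ref{lemma-necessary}, relying on the characterization \eqref{eq-refined-weight}, which reduces membership $u\in F^{m,\text{rloc}}_{\tau,2}(D)$ to the two requirements $u\in F^m_{\tau,2}(D)$ and $\rho^{-m}u\in L_\tau(D)$. By the localization procedure from the proof of Corollary~\ref{cor-Kmm} together with the diffeomorphism invariance from Lemma~\ref{lemma-diffeo}, it suffices to produce, for the model domain $D=\R^d\setminus\R^\ell$ with $\ell=\delta$, a compactly supported function $u$ lying in $F^{m,\text{rloc}}_{\tau,2}(D)$ but not in $\calk^m_{a,p}(D)$; the case $\delta>0$ is traced back to $\R^{d-\delta}\setminus\{0\}$ by regarding the corresponding example in $d-\delta$ variables as a function on $\R^d$ that is constant in the remaining $\delta$ variables, exactly as in Step~2 of the proof of Proposition~\ref{prop-necessary}. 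Below $\rho(x)=|x''|$ for $x=(x',x'')\in\R^\ell\times\R^{d-\ell}$ and $\zeta$ is a fixed smooth cut-off supported near $\R^\ell$.

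For the strict case $m-a<(d-\delta)(\frac1\tau-\frac1p)$ I would take $u=\rho^\beta\zeta$ with any exponent $\beta$ satisfying $m-\frac{d-\ell}{\tau}<\beta\le a-\frac{d-\ell}{p}$; such a $\beta$ exists precisely because the strict hypothesis says $m-\frac{d-\ell}{\tau}<a-\frac{d-\ell}{p}$. By Remark~\ref{remark-radial} the left inequality gives $\rho^\beta\zeta\in F^m_{\tau,2}(\R^d)$, and it also yields $(\beta-m)\tau+(d-\ell)>0$, so $\rho^{-m}u=\rho^{\beta-m}\zeta\in L_\tau$; hence $u\in F^{m,\text{rloc}}_{\tau,2}(D)$. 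On the other hand, $\|\rho^{-a}u|L_p(D)\|=\|\rho^{\beta-a}\zeta|L_p(D)\|$ is infinite, since the right inequality means $(\beta-a)p+(d-\ell)\le 0$; therefore $u\notin\calk^m_{a,p}(D)$.

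For the limiting case $m-a=(d-\delta)(\frac1\tau-\frac1p)$ with $a>m$ — which forces $\tau>p>1$ — a pure power no longer works, and, exactly as in Proposition~\ref{prop-necessary}, I would insert a logarithmic factor and set $u=\rho^\beta(1+|\log\rho|)^\lambda\zeta$ with the critical exponent $\beta=m-\frac{d-\ell}{\tau}=a-\frac{d-\ell}{p}$ (the two expressions agree by the limiting hypothesis). Since $\tau>1$ here, $F^m_{\tau,2}=W^m_\tau$, and using the Leibniz-type bound $|\partial^\alpha(\rho^\beta(1+|\log\rho|)^\lambda)|\lesssim\rho^{\beta-|\alpha|}\sum_{j=0}^{|\alpha|}(1+|\log\rho|)^{\lambda-j}$ one checks, by one-dimensional integral computations in the radial variable of $\R^{d-\ell}$, that $u\in F^{m,\text{rloc}}_{\tau,2}(D)$ holds if and only if $\lambda\tau<-1$, while $\|\rho^{-a}u|L_p(D)\|=\infty$ if and only if $\lambda p\ge-1$. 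Because $\tau>p$, the interval $[-\frac1p,-\frac1\tau)$ is non-empty, and any $\lambda$ in it produces the required function.

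Finally, for the assertion that a continuous embedding $F^{m,\text{rloc}}_{\tau,2}(D)\hookrightarrow\calk^m_{a,p}(D)$ forces $p\le\tau$, I would repeat the argument of Lemma~\ref{lemma-necessary}: fix an interior point $x_0\in D$ with positive distance to the singular set, so that on a small ball $B$ around $x_0$ the weight $\rho$ is comparable to a constant; then for functions supported in $B$ the $\calk^m_{a,p}(D)$-norm is equivalent to the $W^m_p(\R^d)=F^m_{p,2}(\R^d)$-norm, and such functions lie in $F^{m,\text{rloc}}_{\tau,2}(D)$ as soon as they lie in $F^m_{\tau,2}(\R^d)$. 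If $p>\tau$, Proposition~\ref{prop-radial} allows a choice of $\gamma$ with $m-\frac d\tau<\gamma\le m-\frac dp$, and then $f_\gamma(x)=|x-x_0|^\gamma\zeta(x)$ belongs to $F^{m,\text{rloc}}_{\tau,2}(D)$ but not to $\calk^m_{a,p}(D)$, contradicting the embedding; hence $p\le\tau$. The step I expect to require the most care is the limiting case: one must verify that the single logarithmic factor at the critical power simultaneously controls the borderline divergences of $\|\partial^\alpha u|L_\tau\|$ for $|\alpha|=m$, of $\|\rho^{-m}u|L_\tau\|$, and of $\|\rho^{-a}u|L_p\|$, and that the unbounded higher derivatives of $\rho=|x''|$ and of $\zeta$ do not spoil these estimates — but since $F^m_{\tau,2}$ is an ordinary Sobolev space in that regime, everything reduces to elementary estimates of integrals of the form $\int_0^R\varrho^{-1}(1+|\log\varrho|)^{c}\,d\varrho$.
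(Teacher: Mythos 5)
Your proposal is correct and follows essentially the same route as the paper: reduction to $D=\R^d\setminus\R^\ell$, the test functions $\rho^\beta\zeta$ (strict case) and $\rho^\beta(1+|\log\rho|)^\lambda\zeta$ at the critical exponent with $\lambda\in[-\tfrac1p,-\tfrac1\tau)$, membership checked through the characterization \eqref{eq-refined-weight} together with Proposition \ref{prop-radial}, and a Lemma \ref{lemma-necessary}-type interior argument for $p\leq\tau$. The only deviations are cosmetic: in the limiting case you verify $u\in F^m_{\tau,2}=W^m_\tau$ by a direct Leibniz/radial-integral computation (legitimate since $\tau>p>1$ there), where the paper instead cites Lemma \ref{lemma-radial-log}, and you spell out the $p\leq\tau$ argument which the paper leaves implicit.
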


For the proof we intend to use essentially the same test functions as in Proposition \ref{prop-necessary}, but now we need to additionally consider their membership in $F^m_{\tau,2}(D)$. Apart from Proposition \ref{prop-radial}, we will also need the following refined version, which as well  can be found in \cite[Sect.~3.3.1,  Lemma 1]{RunstSickel}.

\begin{lemma}\label{lemma-radial-log}
	The radial function $f_{\beta,\gamma}(x)=|x|^\beta(1+|\log x|)^\gamma\zeta(x)$, where $\zeta$ is a smooth radial
	cut-off function, $\beta,\gamma\in\R$ with $\gamma>0$, belongs to $F^s_{p,q}(\R^d)$ if, and only if, 
	either $s<\frac{d}{p}+\beta$ or $s=\frac{d}{p}+\beta$ and $\gamma p<-1$.
\end{lemma}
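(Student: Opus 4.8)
The plan is to reduce everything to the local behavior of $f_{\beta,\gamma}$ at the origin and then to extract the (quasi-)norm from an explicit dyadic/atomic decomposition. Since $\zeta$ is smooth with compact support and $f_{\beta,\gamma}$ is $C^\infty$ away from $0$, the pointwise multiplier property of Triebel-Lizorkin spaces shows that the tail $|x|\gtrsim 1$ lies in every $F^s_{p,q}(\R^d)$; hence membership is decided solely by a neighborhood of $0$. I would fix a smooth radial resolution of unity $\{\omega_j\}_{j\geq 0}$ adapted to the annuli $A_j=\{2^{-j-1}\leq|x|\leq 2^{-j+1}\}$ and write $f_{\beta,\gamma}=\sum_j g_j$ with $g_j=\omega_j f_{\beta,\gamma}$.

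On $A_j$ one has $|x|\sim 2^{-j}$ and $1+|\log|x||\sim j$, and differentiating produces the natural bounds $|D^\alpha g_j(x)|\lesssim 2^{j|\alpha|}\,2^{-j\beta}j^\gamma$ for all $\alpha$, the factor $2^{j|\alpha|}$ coming both from the chain rule on $|x|^\beta$ and from the derivatives of the cut-off $\omega_j$. Thus $g_j=\lambda_j a_j$ with $\lambda_j=2^{-j\beta}j^\gamma$ and $a_j$ a normalized atom supported on $O(1)$ dyadic cubes of side $2^{-j}$ at scale $j$. Feeding this into the atomic characterization of $F^s_{p,q}(\R^d)$ and using that only a bounded number of cubes occurs at each scale (so that the inner $\ell^q$-sum collapses and the outcome is independent of $q$), the sequence (quasi-)norm reduces to
\[
	\sum_{j\geq j_0}2^{jp(s-\beta-d/p)}\,j^{\gamma p}\,.
\]
Reading off convergence of this series yields exactly the three regimes: it converges for $s<\tfrac{d}{p}+\beta$ (the geometric decay dominates the polynomial $j^{\gamma p}$), diverges for $s>\tfrac{d}{p}+\beta$, and at the borderline $s=\tfrac{d}{p}+\beta$ reduces to $\sum_j j^{\gamma p}$, which converges precisely when $\gamma p<-1$. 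The case $\gamma=0$ recovers Proposition \ref{prop-radial}, a useful consistency check.

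This argument directly gives the sufficiency (``if'') direction. For the necessity (``only if'') direction I would replace the atomic upper bound by a matching lower bound, most cleanly via the wavelet (or local-means) characterization: for a compactly supported wavelet $\psi$ with sufficiently many vanishing moments and located near the sphere $|x|\sim 2^{-j}$, a short computation shows $2^{jd/2}|\langle f_{\beta,\gamma},\psi_{j,m}\rangle|\sim 2^{-j\beta}j^\gamma$ for $O(1)$ positions $m$, which reproduces the same series from below and forces divergence whenever $s>\tfrac{d}{p}+\beta$, or $s=\tfrac{d}{p}+\beta$ and $\gamma p\geq -1$.

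The main obstacle is exactly this lower bound at the borderline. The vanishing moments of the wavelet annihilate the constant part of $f_{\beta,\gamma}$ on each cube, so one must verify that the genuine radial profile $|x|^\beta$ is not also annihilated, i.e.\ that $\int|m+t|^\beta\psi(t)\,dt\neq 0$ for the relevant positions; it is this non-degeneracy that guarantees the logarithmic power $j^\gamma$ is transferred undiminished to the coefficients and hence that the borderline threshold is $\gamma p<-1$. The remaining bookkeeping --- controlling the coefficients of wavelets sitting away from the sphere $|x|\sim 2^{-j}$, where the extra vanishing moments produce rapidly decaying contributions, and assembling the $L_p$-norm of the square function across scales --- is routine and, as above, insensitive to $q$.
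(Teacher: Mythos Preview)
The paper does not prove this lemma at all; it is quoted verbatim from Runst--Sickel \cite[Sect.~3.3.1, Lemma~1]{RunstSickel} and used as a black box. So there is no ``paper's own proof'' to compare against --- you have supplied more than the authors did.

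Your route via a dyadic annular decomposition, with atomic bounds from above and wavelet coefficients from below, is the standard way results of this type are established, and the discriminating series $\sum_j 2^{jp(s-\beta-d/p)}j^{\gamma p}$ you arrive at is the correct one. Two technical points are worth flagging. First, your pieces $g_j=\omega_j f_{\beta,\gamma}$ carry no vanishing moments, so the atomic upper bound as you have written it is only valid in the regime $s>\sigma_{p,q}$ where moment conditions on atoms may be dropped; for smaller $s$ you would either need to subtract a Taylor polynomial on each annulus to manufacture the moments, or fall back on a direct $L_p$ argument. Second, the non-degeneracy issue you single out for the lower bound is real but resolvable: one chooses a specific wavelet position tangential to the sphere $|x|\sim 2^{-j}$ where the radial profile is genuinely non-polynomial across the support, and computes the coefficient explicitly.

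Incidentally, the printed hypothesis ``$\gamma>0$'' is a typo: it is incompatible with the borderline criterion $\gamma p<-1$, and in the only place the lemma is invoked (the proof of Proposition~\ref{prop-necessary-2}) the exponent is taken negative.
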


\begin{proof}[\normalfont\bf Proof of Proposition \ref{prop-necessary-2}.]
	In the case $\delta=0$ and $m-a=d(\frac{1}{\tau}-\frac{1}{p})$ we consider
	$u_\lambda=\rho^{m-d/\tau}(1+|\log\rho|)^\lambda=\rho^{a-d/p}(1+|\log\rho|)^\lambda$, for which we found
	$u_\lambda\in\calk^m_{a,p}(D)$ if, and only if,  $\lambda p<-1$, as well as $\rho^{-m}u_\lambda\in L_\tau(D)$ if,
	and only if,  $\lambda\tau<-1$ (see Proposition \ref{prop-necessary}). On the other hand, by the lemma above we find that also
	$u_\lambda\in F^m_{\tau,2}(D)$ if, and only if,  $\lambda\tau<-1$ (note that $s=\frac{d}{p}+\beta$ corresponds to
	$m=\frac{d}{\tau}+m-\frac{d}{\tau}$). Choosing $-\frac{1}{p}<\lambda<-\frac{1}{\tau}$ gives the first part
	(obviously $a>m$ is equivalent to $\tau>p$).
	
	The second part similarly follows by considering $u=\rho^{a-d/p}$.
\end{proof}

\begin{thm}\label{thm-embedding-3}
	Let $m\in\N$, $a\in\R$, $1<p<\infty$,   $0<\tau<\infty$, and $0\leq l<d$.  Moreover, assume either
	\[
		\tau>p\quad\text{and}\quad m-a>(d-\ell)\Bigl(\frac{1}{\tau}-\frac{1}{p}\Bigr)
	\]
	or $\tau=p$ and $a\leq m$. Furthermore, let $u\in F^{m,\text{rloc}}_{\tau,2}(\R^d\setminus\R^\ell)$ have
	compact support. Then it holds
	\[
		\|u|\calk^m_{a,p}(\R^d\setminus\R^\ell)\|
			\lesssim\|u|F^{m,\text{rloc}}_{\tau,2}(\R^d\setminus\R^\ell)\|\,.
	\]
\end{thm}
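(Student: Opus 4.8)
The plan is to run the localization argument behind Theorem \ref{thm-embedding} in reverse. Fix a Whitney decomposition $\{Q_{j,k_l}\}_{j\ge 0,\,l=1,\ldots,N_j}$ of $\R^d\setminus\R^\ell$ together with an associated resolution of unity $\{\varphi_{j,l}\}$ as in Definition \ref{def-rloc}. Exactly as there, the compact support of $u$ lets us treat all $N_j$ as finite, with $N_j\le c_0\,2^{j\ell}$ for $j\ge j_0$. By Theorem \ref{thm-localize} it suffices to bound $\sum_{j,l}\|\varphi_{j,l}u|\calk^m_{a,p}(\R^d\setminus\R^\ell)\|^p$ from above by $\|u|F^{m,\text{rloc}}_{\tau,2}(\R^d\setminus\R^\ell)\|^p$. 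For the finitely many scales $j<j_0$ one has $\rho\sim 1$ on $2Q_{j,k_l}$, hence $\|\varphi_{j,l}u|\calk^m_{a,p}\|\sim\|\varphi_{j,l}u|W^m_p(2Q_{j,k_l})\|=\|\varphi_{j,l}u|F^m_{p,2}(2Q_{j,k_l})\|$; since $\tau\ge p$, the elementary embedding $F^m_{\tau,2}(\Omega)\hookrightarrow F^m_{p,2}(\Omega)$ on the bounded cube $\Omega=2Q_{j,k_l}$ (which comes from $L_\tau(\Omega)\hookrightarrow L_p(\Omega)$) bounds this by $\|\varphi_{j,l}u|F^m_{\tau,2}(\R^d)\|$, and these finitely many terms contribute at most a multiple of $\|u|F^{m,\text{rloc}}_{\tau,2}(\R^d\setminus\R^\ell)\|^p$ after a trivial use of Hölder's inequality.

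For $j\ge j_0$ we use $\rho\sim 2^{-j}$ on $2Q_{j,k_l}$ together with the dyadic rescaling $y=2^{j-j_0}x$ from the proof of Theorem \ref{thm-embedding}, which gives
\[
	\|\varphi_{j,k_l}u|\calk^m_{a,p}(\R^d\setminus\R^\ell)\|^p
		\sim 2^{(d-ap)(-j+j_0)}\,\|\varphi_{j,k_l}(2^{-j+j_0}\cdot)u(2^{-j+j_0}\cdot)|W^m_p(2Q_{j_0,k_l})\|^p\,.
\]
Using $W^m_p(2Q_{j_0,k_l})=F^m_{p,2}(2Q_{j_0,k_l})$ together with $F^m_{\tau,2}(2Q_{j_0,k_l})\hookrightarrow F^m_{p,2}(2Q_{j_0,k_l})$ (again from $\tau\ge p$) and the dilation property of $F^m_{\tau,2}(\R^d)$, which contributes a factor $2^{(-j+j_0)(m-d/\tau)}$, and then collecting the powers of $2$, we arrive at
\[
	\sum_{l=1}^{N_j}\|\varphi_{j,k_l}u|\calk^m_{a,p}(\R^d\setminus\R^\ell)\|^p
		\lesssim 2^{\beta p(-j+j_0)}\sum_{l=1}^{N_j}\|\varphi_{j,k_l}u|F^m_{\tau,2}(\R^d)\|^p\,,
		\qquad\beta:=m-a-d\Bigl(\tfrac1\tau-\tfrac1p\Bigr)\,.
\]

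The main obstacle, as in Theorem \ref{thm-embedding} but now in the opposite direction, is to pass from the inner $p$-sum to a $\tau$-sum despite $\tau>p$. Hölder's inequality for the finite inner sum with exponents $\tfrac\tau p$ and $\tfrac\tau{\tau-p}$ gives $\sum_{l=1}^{N_j}b_l^p\le N_j^{1-p/\tau}\bigl(\sum_{l=1}^{N_j}b_l^\tau\bigr)^{p/\tau}$; combined with $N_j\lesssim 2^{j\ell}$ this yields
\[
	\|u|\calk^m_{a,p}(\R^d\setminus\R^\ell)\|^p
		\lesssim\|u|F^{m,\text{rloc}}_{\tau,2}(\R^d\setminus\R^\ell)\|^p
			+\sum_{j\ge j_0}w_j\Bigl(\sum_{l=1}^{N_j}\|\varphi_{j,k_l}u|F^m_{\tau,2}(\R^d)\|^\tau\Bigr)^{p/\tau},
		\quad w_j:=2^{\beta p(-j+j_0)+j\ell(1-p/\tau)}\,.
\]
A second application of Hölder in $j$ (same exponents) bounds the remaining sum by $\bigl(\sum_{j\ge j_0}w_j^{\tau/(\tau-p)}\bigr)^{(\tau-p)/\tau}\|u|F^{m,\text{rloc}}_{\tau,2}(\R^d\setminus\R^\ell)\|^p$, and the geometric series $\sum_{j\ge j_0}w_j^{\tau/(\tau-p)}$ converges precisely when the coefficient of $j$ in its exponent is negative, i.e. $-\beta p+\ell(1-p/\tau)<0$; a direct computation shows that this is equivalent to $m-a>(d-\ell)\bigl(\tfrac1\tau-\tfrac1p\bigr)$, which is exactly the hypothesis. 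In the boundary case $\tau=p$ and $a\le m$ one has $\beta=m-a\ge 0$ and $1-p/\tau=0$, so $w_j\le 1$ for $j\ge j_0$ and the claimed estimate follows at once, with no summation switch to perform. This completes the proof; the standing requirement $m>\sigma_{\tau,2}$ enters only to guarantee that $F^{m,\text{rloc}}_{\tau,2}(\R^d\setminus\R^\ell)$ and the characterization \eqref{eq-refined-weight} are at our disposal.
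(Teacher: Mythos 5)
Your argument is correct and is essentially the paper's own proof: the paper disposes of Theorem \ref{thm-embedding-3} by saying one should run the proof of Theorem \ref{thm-embedding} in reverse (localize via Theorem \ref{thm-localize}, rescale with the dyadic substitution, use $L_\tau\hookrightarrow L_p$ on the reference cubes, rescale back, and switch summation indices by H\"older using $N_j\lesssim 2^{j\ell}$), which is exactly what you carry out. Your explicit check that the geometric series converges precisely under $m-a>(d-\ell)\bigl(\tfrac1\tau-\tfrac1p\bigr)$, and the trivial treatment of the case $\tau=p$, $a\leq m$, just fill in details the paper leaves implicit.
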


\begin{proof}
	The result follows by exactly the same arguments as in the proof of Theorem \ref{thm-embedding}, simply by
	reversing the roles of $\calk^m_{a,p}$ and $F^{m,\text{rloc}}_{\tau,2}$, i.e.,  starting with the definition of
	$F^{m,\text{rloc}}_{\tau,2}$, scaling of the terms $\varphi_{j,l}u$, using $L_\tau\hookrightarrow L_p$,
	re-scaling and changing to $p$-summation towards the localization of Kondratiev spaces.
\end{proof}

\end{document}